\newcommand{\RR}{{\mathbb R}}
\newcommand{\NN}{{\mathbb N}}
\newcommand{\ZZ}{\mathbb Z}
\newcommand{\NNd}{\NN^d}
\newcommand{\ANd}{A^{\NN^d}}
\def\A{{\mathcal A}}
\def\M{{\mathcal M}}
\def\R{{\mathcal R}}  
\def\T{{\mathcal T}}
\def\x{{\bf\sf x}}
\def\y{{\bf\sf y}}
\def\z{{\bf\sf z}}
\def\t{{\bf\sf t}}
\def\mvide{{\epsilon }}
\def\freq{\hbox{\rm freq}}
\def\supp{{\rm supp}}
\numberwithin{equation}{section}
\newtheorem{theo}{Theorem}    
\newtheorem{prop}[theo]{Proposition}  
\newtheorem{coro}[theo]{Corollary}  
\newtheorem{lemma}[theo]{Lemma}  
\theoremstyle{definition}
\begin{document}

\title{Cobham-Semenov theorem and $\NN^d$-subshifts}
\author{Fabien Durand}
\address[F.D.]{\newline
Universit\'e de Picardie Jules Verne\newline
Laboratoire Ami\'enois de Math\'ematiques Fondamentales et Appliqu\'ees\newline
CNRS-UMR 6140\newline
33 rue Saint Leu\newline
80039 Amiens Cedex 01\newline
France.}
\email{fabien.durand@u-picardie.fr}

\begin{abstract}
We give a new proof of the Cobham's first theorem using ideas from symbolic dynamics and 
of the Cobham-Semenov theorem (in the primitive case) using ideas from tiling dynamics.
\end{abstract}

\maketitle


\section{Introduction}

Given a subset $E$ of $\NN = \{ 0,1,2, \cdots  \}$ can we find an elementary algorithm 
which accepts the elements of $E$ and rejects those that do not belong to E ?

\medskip

By ``elementary algorithm'' we mean a finite state automaton. 
This question originates from the work of B\"uchi (1960, \cite{Bu}). 
Cobham gave two answers to this question. 
In 1969 he proved that the existence of such an algorithm deeply depends on the numeration base, more precisely :

\bigskip

{\bf Cobham's first theorem.}
{\it Let $p$ and $q$ be two multiplicatively independent integers greater or equal to $2$. 
Then, a set $E\subset \NN$ is both $p$-recognizable and $q$-recognizable 
if and only if $E$ is the finite union of arithmetic progressions.}

\bigskip

Here ``$p$-recognizable'' means that there exists an automaton which accepts exactly 
the language consisting of the expansions in base $p$ of the elements of $E$,
and  $p$ and $q$ are {\bf multiplicatively independent} if whenever ``$p^k=q^l$ for some non-negative integers $k$ and $l$'' 
then $k=l=0$.
Note that finite sets are finite unions of arithmetic progressions (of length 1) and 
hence are $p$-recognizable for all $p$.

For example, we will see that the set $\{ 2^n ; n \in \NN \}$ is $2$-recognizable and, 
as it is not a finite union of arithmetic progressions, it can not be 3-recognizable. 
Cobham's Theorem implies that the set $\{ 2n ; n \in \NN \}$ is $p$-recognizable for any $p\in \NN$. 
But it does not tell us anything about the structure of recognizable sets of integers. 
Cobham gave a second answer  
which gives a complete description of their structure~:

\bigskip
 
{\bf Cobham's second theorem.}
{\it A set $E\subset \NN$ is $p$-recognizable 
if and only if its characteristic sequence $\x \in \{ 0,1 \}^{\NN}$ ($\x_i= 1$ if and only if $i\in E$) 
is the image by a letter to letter morphism of a fixed point of a substitution of constant length $p$.}

\medskip

The original proof of the Cobham's first Theorem was considered by S. Eilenberg in \cite{Ei} as ``almost elementary'' 
but "highly technical",
the technicality being concentrated in the direct implication, the reverse implication being much less difficult.
In 1974, Eilenberg suggested in \cite{Ei} to find some more readable proofs.
Hansel gave in \cite{Ha1} some ideas in this matter and pointed out that the proof consists of two steps:
1) $E$ is syndetic, 2) completion of the proof.
All the other proofs and all the proofs of the generalizations follow the same dichotomy.

Later Muchnik introduced the key concept of {\it definable sets} and, using the formalism of first order logic,
gave a more comprehensible proof of Cobham's first theorem.
In 1993, Michaux and Villemaire \cite{MV} used these definable sets to prove what is known 
as the Cobham-Semenov theorem.
This theorem was originally proved by Semenov in \cite{Se} and is a multidimensional generalization 
of the Cobham's first theorem.

Other characterizations were given in terms of congruences with finite index \cite{Ei} 
and in terms of algebraic power series \cite{CKMR}. 
Due to all these characterizations, 
many generalizations of Cobham's first Theorem can be stated (see \cite{BHMV} for a very nice overview or \cite{AS}).
In this work we are interested in two other directions~: the dynamical systems and the tilings.

\medskip

In Section \ref{definitions} we give the general background of this paper. 
We will see in this section that Cobham's first theorem can be stated using dynamical systems.
This will enable us to give a new proof of this theorem prompted by ideas in \cite{HZ} and \cite{DHS}.
Section \ref{section-frequences} is devoted to the computation of the frequencies of the substitution language words.
It is done in details in order to make the computation in the higher dimensional case easier.
This will be used to prove Cobham's first theorem (in Section \ref{section-primitive} for the primitive case and in Section \ref{section-general} 
for the general case).
The way we treat the primitive case strongly differs from what is done in \cite{Du2} (where the substitutions are not necessarily of constant length). 
In \cite{Du2} we use what is called ``derived sequences'' (see also \cite{Du1}) and here we use the frequencies of words.
The interest of these two sections is that in the constant length context the proof is easier to read and to understand, 
and it makes the generalization to higher dimensions more comprehensible. 
This is done in Section \ref{section-multi}.
It gives a new proof of the 
Cobham-Semenov theorem in the primitive case.
We will use ideas coming from tiling dynamical system theory and more specifically some results in \cite{So1,So2}.
The main argument we use consists of the description of the frequencies of some patterns that occur
in the tilings we consider.

\section{Automata, substitutions, numeration and dynamical systems}
\label{definitions}

\subsection{Numeration systems and automata}

Let $A$ be a finite alphabet. An {\bf automaton over $A$}, $\A = (Q, A , E, I, T)$, 
is a directed graph labelled by elements of $A$ where $Q$ is the set of {\it states}, 
$I\subset Q$ is the set of {\bf initial} states, 
$T\subset Q$ is the set of {\bf terminal} states and $E\subset Q \times A \times Q$ is the set of labelled {edges}. 
If $(p , a , q) \in E$, we write $p \longrightarrow q^{\hskip -21 pt a \hskip 21 pt}$. 
The automaton is {\bf finite} if $Q$ is finite. 
A {\bf path} in the automaton is a sequence $P = ((p_n , a_n , q_n) ; 0\leq n \leq N)$ 
where $q_n = p_{n+1}$ for all $0\leq n \leq N-1$. 
We say it is {\bf admissible} if $p_0$ belongs to $I$ and $q_N$ belongs to $T$. 
We call $L (\A)$ the set of all admissible paths. 
The {\bf label} of $P$ is the word $a_0a_1\cdots a_N$. 
The set $L(\A)$ of labels of admissible paths is called the language of $\A$. 
A subset $L$ of $A^{*}$ is said to be {\bf recognizable by a finite automaton} 
if there exists a finite automaton $\A$ such that $L = L(\A)$.

A {\bf numeration system} is a strictly increasing sequence of integers 

$$
U = (U_n ; n\in \NN )
$$ 

such that
\begin{enumerate}
\item $U_0 = 1$,
\item the set $\left\{ \frac{U_{n+1}}{U_n} ; n\in \NN \right\}$ is bounded.
\end{enumerate}

Let $U = (U_n ; n\in \NN )$ be a numeration system and $c$ be the upper bound of $\left\{ \frac{U_{n+1}}{U_n} ; n\in \NN \right\}$. 
Let $A_U$ be the alphabet $\{ 0,\cdots , c' -1 \}$ where $c'$ is the upper integer part of $c$. 
Using the Euclidean algorithm we can write every integer $x$ in a unique way as follows 
$$
x = a_i U_i + a_{i-1} U_{i-1}+ \cdots +  a_0 U_0;
$$
$i$ is the unique integer such that $U_i\leq x < U_{i+1}$ and $x_i=x$, $x_j = a_j U_j + x_{j-1}$, 
$j\in \{ 1,\cdots ,i \}$, where $a_j$ is the quotient of the Euclidean division of $x_j$ by $U_j$ 
and $x_{j-1}$ the remainder, and $a_0=x_0$. We will say that $ \rho_U (x) = a_i \cdots a_0$ is 
the {\bf $U$-representation} of $x$ and we set
$$
L (U) = \{ 0^n \rho_U (x); n\in \NN , x\in \NN \}.
$$
We say a set $E\subset \NN$ is {\bf $U$-recognizable} if the language 

$$
0^{*} \rho_U (E) = \{ 0^n \rho_U (x) ; n\in \NN , x\in E \}
$$ 

is recognizable by a finite automaton. 

We will focus on the standard numeration systems $U = ( p^n ; n \in \NN)$ 
where $p$ is a positive integer greater or equal to $2$.
For these numeration systems, we say that $E$ is $p$-recognizable and we set $\rho_U = \rho_p$ and $U = U_p$.

\medskip

Let us give some examples to illustrate these notion.
Let 
$$
E_1 = \{ 2n ; n\in \NN \} , \
E_2 = \{ 2^n ; n \in \NN \} \hbox{ and }
$$
$$
E_3 = \left\{ n \in \NN ; \sum_{i=0}^{k} \epsilon_i  \equiv 0 [2], \rho_2 (n) = \epsilon_k \cdots \epsilon_1 \epsilon_0 \right\}.
$$
We have $0^{*}\rho_2 (E_1) = \{ w0 ; w\in \{ 0,1\}^{*} \} $,
$$ 
0^{*}\rho_2 (E_2) = \{ 0^n 1 0^m ; n,m \in \NN \}  \hbox{ and }
$$
$$
0^{*}\rho_2 (E_3) = \left\{ w_0 \cdots w_n \in \{ 0,1 \}^{*}  ; \sum_{i=0}^{n} w_i  \equiv 0 [2] \right\} .
$$
Hence these sets are 2-recognizable, respectively, by the following automata.

\medskip

\epsfysize=1,5truecm
\medskip
\centerline{\epsfbox{deuxn.eps}}
\medskip

\epsfysize=2truecm
\medskip
\centerline{\epsfbox{2expon.eps} }
\medskip

\epsfysize=1,5truecm
\medskip
\centerline{\epsfbox{morse.eps} }
\medskip

Here an arrow going into a state means that this state is an initial state and an arrow going outside of a state means that this state is a terminal state.

The set $E_1$ being an arithmetic progression, Cobham's first Theorem asserts that $E_1$ is 3-recognizable. 
We have $0^{*}\rho_3 (E_1) = \{ w_0 \cdots w_n \in \{ 0,1,2 \}^{*}  ; \sum_{i=0}^{n} w_i  \equiv 0 [2] \}$ 
and the automaton recognizing this set is the following.

\epsfysize=1,5truecm
\medskip
\centerline{\epsfbox{deuxnbase3.eps} }
\medskip

Let us now consider the multidimensional case.
Let $d\geq 1$.
We write $(x_1 , \dots , x_d ) \in \NN^d$ in base $p$ as a tuple of words, on the alphabet $\{ 0,1 , \dots , p-1\}^d$, 
with the same length by adding leading zeroes if needed.
For example, in base $2$ we write $\left( \begin{array}{ll} 1 \\ 5 \end{array} \right)$ as 
$\left( \begin{array}{ll} 0 \\ 1 \end{array} \right)  \left( \begin{array}{ll} 0 \\ 0 \end{array} \right) \left( 
\begin{array}{ll} 1 \\ 1 \end{array} \right)$.
We say $E\subset \NN^d $ is $p$-recognizable if $E$ written in base $p$ is recognizable by a finite automaton.

\bigskip

{\bf Cobham-Semenov theorem.}
{\it Let $p$ and $q$ be two multiplicatively independent integers greater or equal to $2$. 
Then, a set $E\subset \NN^d$ is both $p$-recognizable and $q$-recognizable 
if and only if $E$ is semilinear,}

\bigskip

where {\bf semilinear} means that there exist $n\in \NN$ and finite sets $V_i \subset \NNd$,
$0\leq i\leq n$ such that

$$
E = V_0 \cup \left( \bigcup_{1\leq i\leq n} \sum_{v\in V_i} v \NN \right)  .
$$

\subsection{Words, sequences and morphisms}

We call 
{\bf alphabet} a finite set of elements 
called {\bf letters}. Let $A$ be an alphabet, 
a {\bf word} on $A$ is an element of the free mono\"{\i}d generated 
by $A$, denoted by $A^*$, i.e. a finite sequence (possibly empty) of 
letters. Let $x = x_0x_1 \cdots x_{n-1}$ be a word, its {\bf length} 
is $n$ and is denoted by $|x|$. 
The {\bf empty word} is denoted by 
$\mvide$, $|\mvide| = 0$. The set of non-empty words on  $A$ is 
denoted by $A^+$. If $J= [i,j]$ is an interval of $\NN = \{ 0,1\cdots \}$ 
then $x_J$ denotes the word $x_i x_{i+1}\cdots x_j$ and is called a 
{\bf factor} of $x$. Analogous definitions hold for open or semi-open intervals. 
We say that $x_J$ is a prefix of $x$ when $i=0$ 
and a suffix when $j=n-1$. If $u$ is a factor of $x$, we call {\bf
occurrence} of $u$ in $x$ every integer $i$ such that 
$x_{[i,i + |u| - 1]}= u$. Let $u$ and $v$ be two words, we denote by 
$L_u(v)$ the number of occurrences of $u$ in $v$.

The elements of $A^{\NN}$ are called {\bf sequences}. For a 
sequence $\x = (\x_n ; n\in \NN) = \x_0 \x_1\cdots $ we use the 
notation $\x_J$ and the terms ``occurrence'' and ``factor'' exactly 
as for a word. The set of factors of length $n$ of $\x$ is written 
$L_n(\x)$, and the set of factors of $\x$, or {\bf language} of $\x$, 
is represented by $L(\x)$; $L (\x) = \cup_{n\in \NN} L_n (\x)$. The 
sequence $\x$ is {\bf periodic} if it
is the infinite concatenation of a word $v$. A {\bf gap} of a 
factor $u$ of $\x$ is an integer $g$ which is the difference 
between two successive occurrences of $u$ in $\x$. We say that 
$\x$ is {\bf uniformly recurrent} if each factor has bounded gaps.

Let $A$, $B$ and $C$ be three alphabets. A 
{\bf morphism} $\tau$ is a map from $A$ to $B^*$. Such a map 
induces by concatenation a map from $A^*$ to $B^*$. If $\tau (A)$ is 
included in $B^+$, it induces a map from $A^{\NN}$ to $B^{\NN}$. All 
these maps are written $\tau$ also.

To a morphism $\tau$, from $A$ to $B^*$, is
associa\-ted the ma\-trix $M_{\tau} = (m_{i,j})_{i\in  B, j \in  A  }$ 
where $m_{i,j}$ is the number of occurrences of $i$ in the word 
$\tau(j)$. To the composition of morphisms corresponds the 
multiplication of matrices. For example, let $\tau_1: B \rightarrow C^*$, 
$\tau_2: A \rightarrow B^*$ and $\tau_3: A \rightarrow C^*$ be three 
morphisms such that $\tau_1 \circ \tau_2 = \tau_3$ (we will quite 
often forget the composition sign), then we have the following 
equality: $M_{\tau_1} M_{\tau_2} = M_{\tau_3}$. 
In particular, if 
$\tau $ is a morphism from $A$ to $A^{*}$ we have 
$M_{\tau^n} = M_{\tau}^n$ for all non-negative integers $n$.

\subsection{Substitutions}

A {\bf substitution} on the alphabet $A$ is a morphism 
$\sigma: A\to A^*$ satisfying:

\begin{enumerate}
\item
\label{cond1}
There exists $a\in A$ such that $a$ is the first letter of $\sigma(a)$;
\item
\label{cond2}
For all $b\in A$, $\lim_{n\rightarrow +\infty} |\sigma^n (b)| = +\infty$.
\end{enumerate}

Note that Condition \eqref{cond1} is not always required in the literature about substitutions.
The language of $\sigma$ is the set $L (\sigma )$ consisting of all the words having an 
occurrence in some $\sigma^n (b)$, $n\in \NN$ and $b\in A$.

In some papers (see \cite{Pa} for example) the condition \eqref{cond2} is 
not required to be a substitution and our definition corresponds 
to what Pansiot call {\bf growing substitutions} in \cite{Pa}.

It is classical that $( \sigma^n (aa\cdots) ; n\in \NN )$ converges 
in $A^{\NN}$ to a sequence $\x$. 
The substitution $\sigma $ being 
continuous on $A^{\NN}$ this sequence is a {\bf fixed point} of 
$\sigma$, i.e $\sigma (\x) = \x$.

Whenever the matrix associated to $\tau $ is primitive we say that 
$\tau$ is a {\bf primitive substitution}. It is equivalent to the 
fact that there exists $n$ such that for all $a$ and $b$ in $A$, $a$ 
has an occurrence in $\sigma^n (b)$. 
Note that in this case $L(\sigma ) = L(\x )$ for all fixed points $\x$ of $\sigma$.
It is a {\bf substitution of 
constant length} $p$ if for all $a \in A$ the length of $\sigma (a)$ is $p$.
We will say a sequence $\x$ is {\bf generated by a substitution} of constant length $p$ 
if it is the image by a letter to letter morphism of a fixed point of a substitution of constant length $p$.
These sequences will be also called {\bf $p$-substitutive} in this paper.
Note that usually the definition of $p$-substitutive sequences is more general (see \cite{Du5} for example).

Let $B$ be another alphabet, we say that a morphism $\phi$ from $A$ 
to $B^*$ is a {\bf letter to letter morphism} when $\phi (A)$ is a 
subset of $B$. Then the sequence $\phi (\x)$ is called {\bf substitutive}, 
and {\bf primitive substitutive} if $\tau$ is primitive. The matrix of 
$\tau $ is non-negative and consequently has an 
 eigenvalue $\alpha \in \RR$ such that : If $\beta$ is any other eigenvalue then $|\beta| \leq \alpha$ (this is a consequence of Perron-Frobenius Theorem, see \cite{HJ}). 
We will say that  it is the {\bf dominant 
eigenvalue} or the {\bf Perron eigenvalue} of $\tau$ and that $\phi (\x)$ is {\bf $\alpha$-substitutive}.

\medskip

Let us explain how Cobham's first theorem can be stated using substitutions.
First we need to remark that a set $E\subset \NN$ is a finite union of arithmetic progressions if and only if its characteristic sequence is ultimately periodic. 
Hence, due to Cobham's second theorem, it is an exercise to prove that Cobham's first theorem can be formulated in the following equivalent way : 

\bigskip

{\bf Cobham's first theorem (Substitutive Version)}
{\it Let $p$ and $q$ be two multiplicatively independent integers greater or equal to 2. Let $A$ be a finite alphabet and $\x\in A^{\NN}$. Then the sequence $\x$ is generated by both a substitution of constant length $p$ and a substitution of constant length $q$ if and only if it is ultimately periodic.} 

\bigskip

In the substitutive context, the two steps of the proof we mentioned before consist of proving that the letters of $A$ appear with bounded gaps in 
$\x$ and then to conclude. 

Cobham's second theorem asserts that for each of these sets we can find some substitutions of constant length that generate their characteristic sequences. We labelled the states to construct these substitutions. Let the set of states be the alphabet $A$ of the substitution. The image of the state $a$ is the word $w_0\cdots w_{|A|-1}$ where $w_i$ is the state you reach starting from $a$ and passing through the arrow labelled by $i$. 
The substitutions we obtain for $E_1$ (in base 2), $E_1$ (in base 3), $E_2$ and $E_3$ are :

\[
\begin{array}{llllllllllllllllllllll}
\sigma_1        : & a & \rightarrow & ab  & 
\bar{\sigma}_1  : & a & \rightarrow & aba & 
\sigma_2        : & a & \rightarrow & ab  & 
\sigma_3        : & a & \rightarrow & ab\\
                  & b & \rightarrow & ab  &
                  & b & \rightarrow & bab &
                  & b & \rightarrow & bc  &
                  & b & \rightarrow & ba\\ 
                  &   &             &     &
                  &   &             &     & 
                  & c & \rightarrow & cc  & 
                  &   &                
\end{array}
\]

Take one of these substitutions and call $\x$ its unique fixed point starting with the letter $a$, then, identify to 1 the terminal states and to 0 the other states. 
We obtain the characteristic sequence $\y$ of the corresponding set of integers $E$. For $E_3$ we obtain the well-known Morse sequence $\y$ :

\[
\begin{array}{llllllllllll}
\x & = &  a & b & b & a & b & a & a & b & b & a \cdots\\
\downarrow & \\
\y & = &  1 & 0 & 0 & 1 & 0 & 1 & 1 & 0 & 0 & 1  \cdots\\
\uparrow   &  \\
E_3  & : &  0 &   &   & 3 &   & 5 & 6 &   &   & 9  \cdots.
\end{array}
\]

\subsection{Dynamical systems and subshifts}

By a {\bf dynamical system} we mean a pair $(X,S)$ where $X$ 
is a compact metric space and $S$ a continuous map from 
$X$ to itself. We say that it is a {\bf Cantor system} if $ X $ is a 
Cantor space. That is, $ X $ has a countable basis of its 
topology which consists of closed and open sets and does not have 
isolated points. The system $(X,S)$ is {\bf minimal} whenever $X$ 
and the empty set are the only $S$-invariant closed subsets of $X$. We 
say that a minimal system $(X,S)$ is {\bf periodic} whenever $X$ 
is finite. We say it is $p$-periodic if $\# (X) = p$.

Let $(X,S)$ and $(Y,T)$ be two dynamical systems. We say that 
$(Y,T)$ is a {\bf factor} of $(X,S)$ if there is a continuous 
and onto map $\phi: X \rightarrow Y$ such that $\phi \circ S = T \circ \phi$ 
($\phi $ is called {\bf factor map}). If $\phi$ is one-to-one we 
say that $\phi $ is an {\bf isomorphism} and that $(X,S)$ and
$(Y,T)$ are {\bf isomorphic}.

In this paper we deal with Cantor systems called {\bf subshifts}. Let 
$A$ be an alphabet. We endow $A^{\NN}$ with the topology defined 
by the metric
$$
d(\x,\y) = \frac{1}{2^n} \ \ {\rm with} \ \ n = \inf \{ |k| ; \x_k 
\not = \y_k \},
$$
where $\x = (\x_n ; n\in \NN )$ and $\y = (\y_n ; n\in \NN )$ are two 
elements of $A^{\NN}$. By a {\bf subshift} on $A$ we shall mean a 
pair $(X,T_{\mid X})$ where $X$ is a closed $T$-invariant ($T(X) = X$) 
subset of $A^{\NN}$ and $T$ is the {\bf shift transformation}
\begin{center}
\begin{tabular}{lllll}
$T$ & : & $A^{\NN}$            & $\rightarrow $ & $A^{\NN}$ \\
    &   & $(\x_n ; n \in \NN )$ & $\mapsto$      & $(\x_{n+1} ; n \in \NN )$.
\end{tabular}
\end{center}
We call language of $X$ the set $L (X) = \{ \x_{[i,j]} ; \x\in X, i\leq j\}$.
Let $u$ be a word of $A^{*}$. The set
$$
[u]_X = \{ \x\in X ; \x_{[0,|u|-1]} = u \}
$$
is called {\bf cylinder}. 
The family of these sets is a base 
of the induced topology on $X$. When it will not create confusion we 
will write $[u]$ and $T$ instead of $[u]_{X}$ and $T_{\mid X}$.

Let $\x$ be a sequence on $A$ and $\Omega (\x)$ be the set 
$\{ \y \in A^{\NN} ; \y_{[i,j]} \in L(\x), \forall \ [i,j]\subset \NN \}$. 
It is clear that $(\Omega (\x), T)$ is a subshift. We say that 
$(\Omega (\x), T)$ is the subshift  generated by $\x$. 
We remark $\Omega (\x) = \overline{\{ T^n \x ; n\in \NN \}}$. 
Let 
$(X,T)$ be a subshift on $A$, the following are equivalent:

\begin{enumerate}
\item
$(X,T)$ is minimal.
\item
For all $\x\in X$ we have $X=\Omega (\x)$.
\item
For all $\x\in X$ we have $L(X)=L(\x)$.
\end{enumerate}

We also have that $(\Omega (\x), T)$ is minimal if and only if 
$\x$ is uniformly recurrent.
Note that if $(Y,T)$ is another subshift then, $L(X) = L(Y)$ if and only if $X=Y$.

For primitive substitutions $\sigma$,
all the fixed points are uniformly recurrent 
and generate the same minimal subshift, we call it the {\bf substitution 
subshift generated by $\sigma$} and we denote it $(X_\sigma , T)$
(for more details see \cite{Qu}.)

There is another way to generate subshifts.
Let $L$ be a language on the alphabet $A$ and define $X_L\subset A^\NN$ to be the set of sequences $\x = (\x_n)_{n\in \NN}$
such that each word of $L(\x)$ appears in a word of $L$.
The pair $(X_L , T)$ is a subshift and we call it the {\bf subshift generated by $L$}.
If $\sigma$ is a primitive substitution, then $X_\sigma = X_L$ where $L = \{ \sigma^n (a) ; a\in A \}$.
.

It is easy to show that if $\x$ is an ultimately periodic sequence and $(\Omega (\x) , T)$ is minimal, 
then $\x$ is periodic.
From all the remarks we made before it is not surprising that the following theorem holds :

\begin{theo}
\label{cob:systdyn}
Let $\sigma$ and $\tau$ be two primitive substitutions with dominant eigenvalues $\alpha$ and $\beta$ respectively.
Suppose $\alpha$ and $\beta$ are multiplicatively independent.
Then, $(X_\sigma , T)$ is isomorphic to $(X_\tau , T)$ if and only if 
they are periodic with the same
 period.
\end{theo}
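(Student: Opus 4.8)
The plan is to prove this version of Cobham's theorem by exploiting the dynamical reformulation: two primitive substitution subshifts with multiplicatively independent dominant eigenvalues can only be isomorphic when both are trivial (finite, hence periodic). The key invariant I would use is the set of \emph{frequencies} of words in the language, computed in Section~\ref{section-frequences}. For a primitive substitution $\sigma$ the Perron--Frobenius theorem guarantees that every factor $u\in L(X_\sigma)$ has a well-defined frequency $\freq(u)$, and these frequencies are controlled by the dominant eigenvalue $\alpha$. An isomorphism $\phi:(X_\sigma,T)\to(X_\tau,T)$ is continuous, onto, one-to-one, and commutes with the shift; since the spaces are Cantor (or finite), $\phi$ is a homeomorphism conjugating the two shift actions.

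First I would observe that any topological conjugacy between subshifts is, by the Curtis--Hedlund--Lyndon philosophy, a \emph{sliding block code} in both directions: there is a window size $N$ so that $(\phi(\x))_n$ depends only on $\x_{[n,n+N-1]}$, and symmetrically for $\phi^{-1}$. Consequently $\phi$ sets up a bijective correspondence between long factors of $X_\sigma$ and long factors of $X_\tau$ that \emph{preserves frequencies}: if $v = \phi$-image of the block over $u$, then $\freq_{X_\tau}(v)$ is a fixed positive-integer combination of the frequencies $\freq_{X_\sigma}(u')$ of nearby factors $u'$, and vice versa. The upshot is that the two frequency modules --- the additive groups (or $\QQ$-vector spaces) generated by all word frequencies in $L(X_\sigma)$ and in $L(X_\tau)$ --- must coincide up to the identification induced by $\phi$.

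Next I would pin down the arithmetic nature of these frequency groups. For a primitive substitution of dominant eigenvalue $\alpha$, the frequencies lie in the field $\QQ(\alpha)$ (more precisely in a finitely generated $\ZZ[1/\alpha]$-module governed by the left/right Perron eigenvectors of $M_\sigma$), and the way frequencies scale under applying $\sigma$ is multiplication by $1/\alpha$. Thus $\alpha$ is recoverable from the frequency data as the scaling ratio coming from the self-similar structure $\sigma(X_\sigma)=X_\sigma$. Matching this against the $\beta$-scaling on the $\tau$ side forces a relation between $\alpha$ and $\beta$: the common frequency group must be simultaneously $(1/\alpha)$-scalable and $(1/\beta)$-scalable, which (for nontrivial, i.e. aperiodic, subshifts) makes $\log\alpha/\log\beta$ rational, contradicting multiplicative independence. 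Hence the subshifts cannot both be aperiodic; being minimal and ultimately periodic they are periodic, and equality of frequencies forces the same period. The converse --- that two periodic systems of equal period are isomorphic --- is immediate, since a $p$-periodic minimal subshift is the single orbit of a period-$p$ sequence and any two such are conjugate by relabeling.

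The hard part will be making the frequency-matching argument airtight in the self-similar step: I must show that the conjugacy $\phi$ genuinely transports the $\alpha$-scaling to the $\beta$-scaling rather than merely identifying two abstract groups. The delicate point is that $\phi$ need not intertwine $\sigma$ and $\tau$, so the self-similarity is an \emph{intrinsic} property of each subshift that I have to express purely in frequency terms (as the unique positive scalar by which the whole frequency vector contracts under the return-word / induction structure), and then argue that a conjugacy, preserving frequencies, must preserve this intrinsic scalar. Once this is phrased dynamically --- for instance via the induced system on a cylinder and comparing the two distinct self-inducing ratios --- the multiplicative independence of $\alpha$ and $\beta$ delivers the contradiction and the theorem follows.
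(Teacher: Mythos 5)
Your overall framework is the same as the paper's (word frequencies as the invariant transported by a conjugacy, via the Curtis--Hedlund--Lyndon sliding block code and the resulting finite-to-one correspondence of blocks), but the step you yourself flag as ``the hard part'' is a genuine gap, and the way you propose to close it would not work as stated. The claim that the common frequency group ``must be simultaneously $(1/\alpha)$-scalable and $(1/\beta)$-scalable, which \dots makes $\log\alpha/\log\beta$ rational'' is false as a piece of algebra: a subgroup of $\RR$ such as $\ZZ[1/\alpha,1/\beta]$, or the additive group of $\QQ(\alpha,\beta)$, is closed under multiplication by both $1/\alpha$ and $1/\beta$ with no multiplicative relation between $\alpha$ and $\beta$ whatsoever. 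So identifying ``the intrinsic scaling ratio'' from the abstract frequency module alone cannot succeed; some quantitative finiteness is needed, and your sketch never supplies it.

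The paper's proof supplies exactly that finiteness, and this is the real content you are missing. Theorem \ref{mesure-cyl} shows that there is a single finite set $F_\sigma\subset\RR$ such that for every length $n$ the frequencies of \emph{all} words of length $n$ lie in $\{s\alpha^{-k}:s\in F_\sigma\}$ for one exponent $k=k(n)$; the proof goes through the $2$-block substitution $\sigma_2$ and uses linear recurrence (Propositions \ref{sublinrec} and \ref{linrec}) to bound the number of occurrences $|\sigma^k(ab)|_u$ by a constant independent of $u$, so that only finitely many coefficient combinations $\sum_{ab} j_{ab}\,\freq_\sigma(ab)$ can arise. This structure is then pushed to a common factor $(Y,T)$ via the bounded-preimage estimate (Proposition \ref{bounded-preimages}), giving Theorem \ref{mesure-cyl-fact}. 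The conclusion is then a pigeonhole argument: since $\freq_Y(u)\le 2K/|u|\to 0$, infinitely many distinct frequency values occur, each of the form $s\alpha^{-k}=t\beta^{-l}$ with $s\in F_\sigma$, $t\in F_\tau$ drawn from finite sets; two of them must share the same pair $(s,t)$ with different exponents, yielding $\alpha^{k-k'}=\beta^{l-l'}$ and hence multiplicative dependence. Your proposal would become a proof if you replaced the ``scaling group'' heuristic by this finite-coefficient statement; without it, the argument does not close.
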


The definition of multiplicative independence of real positive numbers is the same as the one we gave before for positive integers :
two positive real numbers $\alpha, \beta$ are multiplicatively independent if 
whenever $\alpha^n = \beta^m$, $n,m\in \ZZ$, this implies $n=m=0$.
Remark that it is equivalent to the fact that $\frac{\log \alpha }{\log \beta }$ is an irrational number.

We will prove this theorem later and show why it implies Cobham's first theorem.
A key notion in this proof is the notion of {\it return words}.

\subsection{Return words}

For the rest of the section $\x$ is a uniformly recurrent sequence on 
the alphabet $A$ and $(X,T)$ is the minimal subshift it generates. We 
recall that all sequences in $X$ are uniformly recurrent. Let $u$ 
be a non-empty word of $L(X)$.

A word $w$ on $A$ is a {\bf return word to $u$}
in $\x$ if there exist two consecutive occurrences $j,k$ of $u$ in $\x$
such that $w=\x_{[j,k)}$.
The set of return words to $u$ is denoted by $\R_u (\x)$. It is immediate 
to check that a word $w\in A^+$ is a return word if and only if:

\begin{enumerate}
\item
$uwu\in L(\x)$ (i.e. $uwu$ is a factor of $\x$);
\item
$u$ is a prefix of $ wu$;
\item
the word $wu$ has only two
occurrences of $u$.
\end{enumerate}

{\bf Remarks.}

\begin{enumerate}
\item
As $\x$ is uniformly recurrent, the
difference between two consecutive occurrences of $u$ in $\x$ is
bounded, and the set $\R_u (\x)$ of return words to $u$ is finite.
\item
The previous statement $(2)$ cannot
be simplified: it is not equivalent to
{\it $u$ is a prefix of $w$}.
For example, if  $aaa$ is a factor of $\x$ then the word $a$
is a return word to $aa$.
\item
 From this characterization, it follows that the set of return words 
to $u$ is the same for all $y\in X$, hence we set $\R_u (X) = \R_u (\x)$.
\end{enumerate}

If it is clear from the context, we write $\R_{u}$ instead of $\R_{u} (\x)$.

\section{Some useful properties of the substitutions}
\label{section-frequences}

In this section we develop the tools we will use to prove Theorem \ref{cob:systdyn}.

\subsection{Return words of a substitutive sequence}

We say that a sequence $\x$ on a
finite alphabet is {\bf linearly recurrent (with the constant 
$K\in
\NN$)} if it is recurrent and if, 
for every word $u$ of $\x$ and all $w\in \R_u$ it holds
$$
|w| \leq K|u| .
$$

We say $(X,T)$ is a {\bf linearly recurrent subshift} if it is a minimal subshift that contains a linearly recurrent sequence. 

\begin{prop}
\label{sublinrec}
All primitive substitutive sequences, and the subshifts they generate, are linearly recurrent.
\end{prop}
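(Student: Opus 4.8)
The plan is to prove the statement first for a fixed point $\x$ of the primitive substitution $\sigma$, and then to deduce the general case. Throughout I use the following reformulation of the definition: a uniformly recurrent sequence is linearly recurrent with constant $K$ if and only if every factor $u\in L(\x)$ has all of its gaps bounded by $K|u|$. Indeed a gap of $u$ is exactly the length of a return word to $u$, so ``$|w|\le K|u|$ for all $w\in\R_u$'' is literally the assertion that all gaps of $u$ are at most $K|u|$.

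First I would record the length estimates coming from primitivity. Writing $a_n=\min_{b\in A}|\sigma^n(b)|$ and $b_n=\max_{b\in A}|\sigma^n(b)|$, condition (2) gives $a_n\to+\infty$, and with $\ell=\max_{b\in A}|\sigma(b)|$ one has $a_{n+1}\le \ell\,a_n$. Primitivity provides an integer $n_0$ with every letter occurring in $\sigma^{n_0}(b)$ for all $b\in A$; expanding $\sigma^{n+n_0}(b)=\sigma^n(\sigma^{n_0}(b))$ then shows $a_{n+n_0}\ge b_n$ and $b_{n+n_0}\le b_{n_0}b_n$, whence a constant $L$ (depending only on $\sigma$) with $b_n\le L\,a_n$ for all $n$. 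These three facts (that $a_n\to\infty$, that the consecutive ratio is bounded, and that the spread $b_n/a_n$ is bounded) are the only quantitative inputs needed.

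Next comes the combinatorial core. Since $\sigma$ is primitive, $L(\x)=L(\sigma)$ and every letter occurs in $\x$; using this and $n_0$ I can fix an integer $n_1$ such that every two-letter factor of $\x$ occurs in $\sigma^{n_1}(b)$ for every $b\in A$ (each of the finitely many pairs occurs in some $\sigma^N(d)$, and $\sigma^{n_0}(b)$ contains $d$). Now take any $u\in L(\x)$ and let $n$ be maximal with $a_n\le |u|$, so that $a_n\le |u|<a_{n+1}$. Reading $\x=\sigma^{n+1}(\x)$ as a concatenation of blocks $\sigma^{n+1}(\x_i)$, each of length $\ge a_{n+1}>|u|$, any occurrence of $u$ meets at most two consecutive blocks, hence $u$ is a factor of $\sigma^{n+1}(bc)$ for some two-letter factor $bc$ of $\x$. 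Applying $\sigma^{n+1}$ to the occurrence of $bc$ inside $\sigma^{n_1}(\x_i)$ then shows that $u$ occurs in every block of the finer decomposition $\x=\prod_i\sigma^{n+1+n_1}(\x_i)$. Consecutive occurrences of $u$ are therefore at distance at most $2b_{n+1+n_1}\le 2L\,a_{n+1+n_1}\le 2L\,\ell^{\,n_1+1}a_n\le 2L\,\ell^{\,n_1+1}|u|$, giving linear recurrence of $\x$ with $K=2L\ell^{\,n_1+1}$ (since $a_0=1$ the argument already applies to every nonempty $u$, and any remaining short cases are absorbed by enlarging $K$).

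Finally I would pass to an arbitrary primitive substitutive sequence $\y=\phi(\x)$ with $\phi$ letter to letter. Because $\phi$ preserves lengths and maps factors to factors, if $u'=\y_{[j,j+m-1]}$ is a factor of $\y$ then $u=\x_{[j,j+m-1]}$ satisfies $\phi(u)=u'$; by linear recurrence of $\x$ there is a further occurrence of $u$ within distance $K|u|$, and its $\phi$-image is again $u'$, so every gap of $u'$ is $\le Km=K|u'|$. Thus $\y$ is linearly recurrent with the same constant. Since $\x$, and hence $\y$, is uniformly recurrent, $\Omega(\x)$ and $\Omega(\y)$ are minimal and contain a linearly recurrent sequence, so the generated subshifts are linearly recurrent as well. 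I expect the main obstacle to be the quantitative step, namely extracting the \emph{uniform} length bounds $b_n\le L\,a_n$ and $a_{n+1}\le \ell\,a_n$ from primitivity, as it is precisely these that convert the purely combinatorial statement ``$u$ occurs in every block'' into the linear bound $|w|\le K|u|$; everything else is bookkeeping.
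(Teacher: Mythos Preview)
Your proof is correct and follows essentially the same route as the paper's: both first extract from primitivity the uniform length ratio $\max_b|\sigma^n(b)|\le C\min_b|\sigma^n(b)|$, then locate an arbitrary factor $u$ inside $\sigma^{k}(ab)$ for a suitable $k$ and some two-letter factor $ab$, and finally convert the bounded gaps of two-letter factors into a linear bound on the gaps of $u$. The paper packages the last step into a single constant $R$ (the maximal gap of a length-$2$ word), whereas you make it explicit via the exponent $n_1$; and you additionally spell out the passage to letter-to-letter images $\phi(\x)$, which the paper's proof omits despite the proposition covering that case --- but these are presentational rather than substantive differences.
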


\begin{proof}
Let $\tau $ be a primitive substitution and $\x$ one of its fixed points.
We begin proving that there
exists a constant $C$ such that for all positive integers $k$

\begin{align}
\label{iksk} 
S_k= \max \{ |\tau^k (a)| ; a\in A\}
 \leq C \min \{ |\tau^k (a)| ; a\in A\} = CI_k\ . 
\end{align}

For all $k$ we choose some letters $a_k$ and $b_k$ such that $|\tau^k (a_k)|=I_k$ and $|\tau^k (b_k )| = S_k$. 
By primitivity there exists $k_0$ such that for all $a,b\in A$ the letter $b$ has an occurrence in the word $\tau^{k_0} (a)$.
We set $C= |\tau^{k_0} (b_{k_0})|$.
For $k\geq k_0$ we have

\begin{align*}
S_k = \left|\tau^k (b_k )\right| = \left|\tau^{k_0} \left( \tau^{k-k_0} (b_k ) \right)\right| \leq C \left|\tau^{k-k_0} (b_k)\right| \leq
C\left |\tau^k (a_k) \right|=  CI_k .
\end{align*}

Let $u$ be a word of $ L(\x)$ and $w$ be a return word to $u$.
Let $k$ be the smallest integer such that $I_k \geq |u|$. 
The choice of $k$ entails that there exists a word $ab\in
 L(\x)$ of length 2 such that $u$ occurs in $\tau^k (ab)$.
Let $R$ be the largest difference between two successive occurrences
of a word of length 2 of $ L (\tau)$. It follows

$$ 
|w| \leq R S_k \leq RCI_k \leq RCS_1 I_{k-1} \leq RCS_1 |u|\ . 
$$

\end{proof}

When the substitution $\sigma$ is primitive we will also say that $\sigma $ is linearly recurrent with some constant.

\begin{prop}
\label{linrec}
Let $\x \in A^{\NN}$ be an non-periodic linearly recurrent sequence for the constant $K$. Then:    
\begin{enumerate}
\item
The number of distinct factors of length $n$ of
$\x$ is less or equal to $Kn$.
\item
$\x$ is $(K+1)$-power free (i.e. $u^{K+1}\in  L(\x)$ if and only if
$u=\epsilon$).
\item
For all $ u\in  L(\x)$ and for all $w \in\R_{u}$ we have $(1/
K)|u| < |w| $.
\item
For all $u\in L(\x)$, $\# \R_u \leq K(K+1)^2$.
\end{enumerate}
\end{prop}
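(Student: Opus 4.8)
The plan is to treat the four items in the order (1), (3), (2), (4), since (2) is most naturally deduced from (3), while (4) rests on (1) and (3).

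For (1) I would argue directly from the gap bound built into linear recurrence. Fix $n$ and a factor $v$ of length $n$; since every $w\in\R_v$ satisfies $|w|\le K|v|=Kn$, consecutive occurrences of $v$ differ by at most $Kn$. Choose a position $a$ large enough that every factor of length $n$ has already occurred before $a$ (possible, as there are finitely many). Then for each such $v$ there is an occurrence before $a$, so the first occurrence $\ge a$ is $<a+Kn$; hence $v$ occurs starting in $[a,a+Kn)$ and lies inside the single word $\x_{[a,a+Kn+n-1)}$. A word of length $Kn+n-1$ has at most $Kn$ distinct factors of length $n$, so the number of length-$n$ factors of $\x$ is at most $Kn$.

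The core is (3), from which (2) follows at once. Indeed, if $u^{K+1}\in L(\x)$ with $u\neq\mvide$ and $v$ is the primitive root of $u$, then the two occurrences of $v^{K}$ inside $v^{K+1}$ are at distance $|v|=|v^{K}|/K$, and by primitivity these are consecutive, so $v^{K}$ has a return word of length exactly $|v^{K}|/K$, contradicting (3) applied to $v^K$; conversely a return word $w\in\R_u$ with $|w|\le|u|/K$ forces $u$ to have period $|w|$, whence $wu$ has period $|w|$ and length $\ge(K+1)|w|$, so $w^{K+1}\in L(\x)$, which is forbidden by (2). To prove (3) itself I would argue by contradiction: a return word $w\in\R_u$ with $K|w|\le|u|$ makes $u$, and then a whole factor of $\x$, periodic with the small period $p=|w|$; let $R=\x_{[a,b)}$ be the maximal factor of period $p$ extending this occurrence, which is finite since $\x$ is non-periodic. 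The idea is to bound $\ell=b-a$ against $p$: the letter at $b$ destroys the period, so the short word $D=\x_{[b-p,b]}$ recording this break cannot occur in the interior of $R$ (using primitivity of the period block and maximality of $R$), and uniform recurrence together with $|\text{return word of }D|\le K(p+1)$ limits how long a period-$p$ stretch can be.

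I expect this last estimate to be the main obstacle. A naive count loses a single letter — the break must be recorded by a word of length $p+1$, and $K(p+1)$ is just large enough to accommodate a run of length $(K+1)p$ — so the argument has to be sharpened, for instance by placing the marker at the left end of $R$ so that its first return spans the entire run, or by inducting on the period $p$, in order to close the gap between ``no $(K+2)$-power'' and the asserted ``no $(K+1)$-power''. This is precisely where non-periodicity must enter in full strength. Finally (4) follows from (1) and (3): by (3) and the definition every $w\in\R_u$ satisfies $|u|/K<|w|\le K|u|$, so each $uw$ is a factor of length at most $(K+1)|u|$ beginning with $u$, and distinct return words give distinct such words. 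By uniform recurrence every factor of length $\le(K+1)|u|$ occurs inside any window of length $(K+1)^2|u|$; fixing one window $\Phi$, every return word is realised as a gap between two occurrences of $u$ inside $\Phi$. Since consecutive occurrences of $u$ are more than $|u|/K$ apart by (3), $\Phi$ contains fewer than $(K+1)^2|u|/(|u|/K)=K(K+1)^2$ occurrences of $u$, whence $\#\R_u\le K(K+1)^2$.
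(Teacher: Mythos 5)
Your items (1) and (4) are correct and essentially identical to the paper's arguments; the key observation driving both (and the paper's whole proof) is that every factor of length $n$ occurs in every window of length $(K+1)n-1$. The problem lies in the pair (2)--(3), where you invert the logical order the paper uses, and the inversion does not close. You attack (3) directly: a short return word gives a period-$p$ run, you take the maximal such run $R=\x_{[a,b)}$, and you try to bound $b-a$ via the return time of the length-$(p+1)$ word recording the break of periodicity at $b$. That word indeed cannot occur at positions in $[a,b-p)$, so its gap there is at least $b-a-p+1$, while linear recurrence only bounds gaps by $K(p+1)$; this yields $b-a\le (K+1)p+K-1$, whereas the hypothesis only forces $b-a\ge (K+1)p$, so no contradiction results. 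You acknowledge this yourself, but the proposed repairs (marking the left end of $R$, inducting on $p$) are not carried out, and the left end is genuinely problematic for a one-sided sequence, since the run may begin at position $0$ with no break at all. Because your proof of (2) is deduced from (3), both items remain unproved as written.

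The paper sidesteps all of this by proving (2) first and independently, and the missing ingredient in your write-up is precisely the Morse--Hedlund theorem. If $u^{K+1}\in L(\x)$ with $u\neq\mvide$, then since $|u^{K+1}|\ge (K+1)|u|-1$, \emph{every} factor of $\x$ of length $|u|$ occurs in $u^{K+1}$; but $u^{K+1}$ is $|u|$-periodic and therefore contains at most $|u|$ distinct factors of length $|u|$. A sequence with at most $n$ factors of some length $n$ is ultimately periodic, and an ultimately periodic uniformly recurrent sequence is periodic, contradicting non-periodicity. Item (3) then follows in two lines --- exactly the implication you already wrote down as your ``converse'': if $w\in\R_u$ with $|w|\le |u|/K$, then $u$ being a prefix of $wu$ forces $w^{K+1}$ to be a prefix of $wu\in L(\x)$, contradicting (2) since return words are non-empty. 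Restructuring your argument in this order (prove (2) via the factor count and Morse--Hedlund, then deduce (3)) turns your proposal into a complete proof; no primitive-root or maximal-run analysis is needed.
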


\begin{proof}
We start with a remark.
Let $n$ be a
positive integer and $u\in  L(\x)$ a word of length $(K+1)n-1$. Let
$v\in L(\x)$ be a word of length $n$. The difference between two
successive occurrences of $v$ is smaller than $Kn$, consequently $u$
has at least one occurrence of $v$. We have proved that:
For each $n$, every words of length $n$ has at least one occurrence in each word of length
$(K+1)n-1$. From this remark we deduce (1).

Let $u\in  L(\x)$ be a word such that $u^{K+1}\in  L(\x)$.
Each factor of $\x$ of length $|u|$ occurs in $u^{K+1}$. 
But in
$u^{K+1}$ occurs at most $|u|$ distinct factors of length $|u|$
of $\x$. This contradicts the non-periodicity of $\x$. 
(We recall that if for some $n$ a sequence $\y\in A^\NN$ has at most $n$ different words of length $n$, 
then it is ultimately periodic, see \cite{HM}.)

Assume there exist $u\in  L(\x)$ and $w\in  \R_u$
such that $|u|/K \geq |w|$. The word $w$ is a return word to $u$
therefore $u$ is a prefix of $wu$. We deduce that $w^{K}$ is a
prefix of $u$. Hence $w^{K+1}$ belongs to $ L(\x)$ because
$wu$ belongs to $ L(\x)$. Consequently $w=\epsilon$ and
(3) is proved.

Let $u$ be a factor of $\x$ and $v \in L(\x)$ be a word of
length $(K+1)^2|u|$. Each word of length $(K+1)|u|$ occurs in $v$,
hence each return word to $u$ occurs in $v$. It follows from
(3) that in $v$ will occur at most
$K(K+1)^2|u|/|u|= K(K+1)^2$ return words to $u$, which proves
(4).
\end{proof}

\subsection{Perron Theorem and frequencies of the letters}
\label{perron-freq}

The following well-known theorem is fundamental to prove the existence and compute the frequencies of the words in fixed points of substitutions.
The proof can be found in \cite{HJ} or \cite{LM}.

\begin{theo}
\label{perron-frob}
Let $M$ be a $d\times d$ primitive matrix. Then :

\begin{enumerate}
\item
The matrix $M$ has a positive eigenvalue $\theta$ which is strictly greater than the modulus of any other eigenvalue;
\item
The eigenvalue $\theta$ is algebraically simple;
\item
To this eigenvalue corresponds an eigenvector with positive entries. 
\item
There exist $0<r <\theta$ and $C$ such that for all $i,j\in \{ 1, \dots , d \}$ and all $n\in \NN$ we have 

$$
\left| 
M_{ij}^n - r_il_j \theta^n
\right|
\leq Cr^n ,
$$

where $(r_1 , \dots , r_A )$ and  $(l_1,\dots , l_A )$ are respectively the unique right and left eigenvectors satisfying
\begin{align}
\sum_{a\in A} r_a = 1 \hbox{ and } \sum_{a\in A} r_a l_a = 1.
\end{align}
\end{enumerate} 
\end{theo}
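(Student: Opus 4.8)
The plan is to recognise this as the classical Perron--Frobenius theorem together with the standard spectral estimate for the powers of $M$, organised around the reduction from a primitive matrix to a strictly positive one. First I would produce a positive eigenvalue with a positive eigenvector. Let $\Delta=\{x\in\RR^d ; x_i\geq 0,\ \sum_i x_i=1\}$ be the standard simplex. Since $M$ is primitive we have $M^k>0$ for some $k$, so $M^kx>0$ for every nonzero $x\geq 0$; in particular $Mx\neq 0$, and the map $f(x)=Mx/\|Mx\|_1$ is well defined and continuous on the convex compact set $\Delta$. By Brouwer's fixed point theorem $f$ has a fixed point $r\in\Delta$, which gives $Mr=\theta r$ with $\theta=\|Mr\|_1>0$ and $r\geq 0$; writing $r=\theta^{-k}M^kr$ shows that every entry of $r$ is strictly positive. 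Applying the same argument to the transpose $M^T$ yields a strictly positive left eigenvector $l$ for the same value $\theta$. This gives part (3) and the positivity needed for the two normalisations.

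Next I would show that $\theta$ is the spectral radius and is both strictly dominant and algebraically simple, which is where primitivity, and not mere irreducibility, is needed. For any eigenvalue $\lambda$ with eigenvector $v$ the componentwise inequality $M|v|\geq|Mv|=|\lambda|\,|v|$, paired with $l^TM=\theta l^T$, gives $\theta\,l^T|v|\geq|\lambda|\,l^T|v|$ and hence $|\lambda|\leq\theta$; so $\theta$ is the spectral radius. To obtain strict dominance I would pass to the strictly positive matrix $B=M^k$, whose Perron root is $\theta^k$. If $\lambda$ is an eigenvalue of $B$ with eigenvector $w$ and $|\lambda|=\theta^k$, then pairing $B|w|\geq|\lambda|\,|w|$ with the positive left eigenvector forces the equality $B|w|=\theta^k|w|$, so $|w|>0$, and the resulting equality in the triangle inequality $|Bw|=B|w|$ (all entries of $B$ being positive) forces all coordinates of $w$ to share one argument, whence $\lambda=\theta^k$. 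Thus $\theta^k$ is the only eigenvalue of $B$ of maximal modulus and its eigenspace is one-dimensional. Transporting back, any eigenvalue $\lambda\neq\theta$ of $M$ with $|\lambda|=\theta$ would satisfy $\lambda^k=\theta^k$ and have its eigenvector in the one-dimensional $\theta^k$-eigenspace of $B$, forcing it parallel to $r$ and so $\lambda=\theta$, a contradiction; this proves (1). Since the algebraic multiplicity of $\theta^k$ for $B$ equals that of $\theta$ for $M$, the algebraic simplicity of the Perron root of the positive matrix $B$ gives (2).

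Finally I would read off the estimate (4) from the spectral decomposition. Because $\theta$ is a simple, strictly dominant eigenvalue, its spectral projection is the rank-one operator $P=r\,l^T$; with the normalisations $\sum_a r_a=1$ and $\sum_a r_a l_a=1$, i.e. $l^Tr=1$, one has $P^2=P$, $MP=PM=\theta P$ and $P_{ij}=r_il_j$. Setting $N=M-\theta P$, the operators commute with $NP=PN=0$, so $M^n=\theta^nP+N^n$, and the spectral radius of $N$ equals $\max\{|\lambda| ; \lambda\neq\theta\}<\theta$. Choosing the number $r$ of the statement with this spectral radius $<r<\theta$, Gelfand's formula provides a constant $C$ with $\|N^n\|\leq Cr^n$, and reading this entrywise gives $|M^n_{ij}-r_il_j\theta^n|=|(N^n)_{ij}|\leq Cr^n$, which is exactly (4).

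The step I expect to be the main obstacle is the passage from irreducibility to strict dominance: an irreducible but imprimitive matrix (for instance a cyclic permutation) can have several eigenvalues of maximal modulus, so the argument must genuinely use $M^k>0$ and the equality analysis in $|Bw|=B|w|$ to exclude peripheral eigenvalues other than $\theta$. Everything else is bookkeeping once the strict Perron theorem for the positive matrix $B$ is in hand. Since this is a textbook result, in the paper it is enough to cite \cite{HJ} or \cite{LM} as indicated.
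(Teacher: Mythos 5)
Your proof is correct: it is the classical Perron--Frobenius argument (Brouwer's fixed point theorem on the simplex to produce the positive right and left eigenvectors, reduction to the strictly positive power $M^k$ with the equality analysis in the triangle inequality to exclude peripheral eigenvalues and get simplicity, and the rank-one spectral projection $P=rl^T$ together with Gelfand's formula for the estimate in (4)). The paper gives no proof of this statement and simply cites \cite{HJ} and \cite{LM}, which is precisely the textbook source of the argument you reproduce, so your write-up just fills in what the paper delegates to the references.
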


Let $\sigma : A\to A^*$ be a primitive substitution, $M$ its matrix and $\x$ one of its fixed points.
The eigenvalue $\theta$ of the previous theorem will be called the {\bf Perron eigenvalue} of $M$ or $\sigma$.
These real numbers are called {\bf Perron numbers}.
We take the notations of the previous theorem.

For a word $u\in L(\x)$ we call {\bf frequency} of $u$ in $L(\x )$ the limit (when it exists)

$$
\freq_\sigma (u) = \lim_{|v|\to \infty,v\in L(\x )} \frac{1}{|v|} \# \left\{ 0\leq i\leq |v|-|u|-1 ; u = v_{[i,i+|u|-1]} \right\} .
$$

We recall that for all $n$ and all $a,b$ in $A$ we have $|\sigma^n (b)|_a = (M^n)_{a,b}$.
Consequently from Perron theorem we obtain, for all $n\in \NN$, 

\begin{align}
\left| 
|\sigma^n (b)|_a - r_al_b \theta^n
\right|
\leq & Cr^n , \hbox{ thus}\\
\label{sigman}
\left| 
|\sigma^n (b)| - l_b \theta^n
\right|
\leq & (\# A)Cr^n  \hbox{ and }\\
\left| 
|\sigma^n (b)|_a - r_a |\sigma^n (b)| 
\right|
\leq & C(1+\# A)r^n .
\end{align}

We set $C' =C(1+\# A)$.
We fix $a\in A$.
Now we prove that $\freq_\sigma (a)$ exists.
Let $v\in L(\sigma )$.
There exist $n$, and, words $v_i$ ($0\leq i \leq n $) and $w_i$ ($0\leq i \leq n $) such that 

\begin{enumerate}
\item
$|v_i|\leq L$ and $|w_i|\leq L$ for $0\leq i\leq n$ where $L=\max_{b\in A} |\sigma (b)|$;
\item
$v_n$ is non-empty;
\item
$v= v_0 \sigma (v_1) \cdots \sigma^{n-1} (v_{n-1}) \sigma^n (v_n)  \sigma^{n-1} (w_{n-1}) \cdots (w_1) w_0 $.
\end{enumerate}

Moreover from \eqref{sigman} there exists a constant $C''>0$ such that $|\sigma^n ( u)|\geq C''|u|\theta^n$ for all $u\in L(\sigma )$.
Hence

$$
\left||v|_a -  r_a| v| \right| \leq 2C' \frac{r^{n+1} -1}{r-1} \leq C''' |v|^\alpha ,
$$

for some constants $C'', C'''$, where $\alpha = \log r/\log \theta <1$.
This means that $\freq_\sigma (a)$ exists and is equal to $r_a$.

\subsection{Substitutions of the words of length $n$ and frequencies of words}

Here $\sigma : A \to A^*$ is a primitive substitution.
In this section we prove the frequency of words exists for primitive substitutions.
We use the previous subsection and the following substitutions.

Let $k\geq 1$ be an integer and $A_k$ be the set of words of length $k$ belonging to $L(\sigma )$;
We consider $A_k= \{ (u) ; u\in L(\sigma ) , |u| = k \}$ as an alphabet and we define a substitution $\sigma_k : A_k \to A_k^*$ in the following way: 
For $(u)\in A_k$ with $\sigma (u) = v=v_1 \dots v_m$ and $p= |\sigma (u_1)|$, we put

$$
\sigma_k ((u)) = (v_{[1,k]})(v_{[2,k+1]}) \cdots  (v_{[p,p+k-1]}) .
$$

In other words, $\sigma_k (u)$ consists of the ordered list of the first $|\sigma (u_1)|$ factors of length $k$ of $\sigma (u)$.
Remark that, for every $n>1$, $\sigma_k^n $ is associated to $\sigma^n$ in the same way as $\sigma_k$ is associated to $\sigma $:
$\sigma_k^n ((u))$ consists of the ordered list of the first $|\sigma^n (u_1)|$ factors of length $k$ of $\sigma^n (u)$.
In particular we have:

\begin{align}
\label{theta=theta2}
|\sigma_k^n ((u))| = |\sigma^n (u_1)| .
\end{align}

If $n$ is large enough, every $v\in L(\sigma)$ of length $k$ is a factor of $\sigma^n (a)$ for every $a\in A$;
Thus, $(v)\in A_k$ occurs in $\sigma_k^n ((u))$ for every $(u)\in A_k$.
We proved that $\sigma_k$ is primitive.

Let $w$ be a word of length $n>0$ over the alphabet $A_k$.
From the definition of $L(\sigma )$ and $L(\sigma_k )$ it can be checked that : 
$w\in L(\sigma_k )$ if and only if there exists a word $v\in L(\sigma )$ of length
$n+k-1$ such that $w = (v_{[1,k]})(v_{[2,k+1]})\dots (v_{[n,n+k-1]})$.
Clearly, given $(u)\in  A_k$, the number of occurrences of the symbol $(u)$ in $w$ is equal to the number of occurrences of $u$ 
in $v$.
And consequently

\begin{align}
\label{frequence-egales}
\freq_\sigma (u) = \freq_{\sigma_k} ((u)) . 
\end{align}

Finally applying the results of the previous subsection to $\sigma_k$ for all $k$ we obtain the following result.

\begin{prop}
\label{prop-freq}
For all $u\in L(\sigma )$ there exist constants $\freq_\sigma (u)$, $D$ and $\alpha <1$
such that for all $v\in L(\sigma )$ of length greater than $|u|$ we have

$$
\left| |v |_u -\freq_\sigma (u) \left( |v|-|u| +1 \right) \right| 
\leq 
D \left( |v| -|u| +1 \right)^\alpha .
$$
\end{prop}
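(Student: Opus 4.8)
The plan is to reduce the statement for an arbitrary word $u$ of length $k$ to the already-established letter case (the final estimate of Subsection \ref{perron-freq}) by passing to the auxiliary substitution $\sigma_k$ on the alphabet $A_k$. The key bridge is equation \eqref{frequence-egales}, which identifies $\freq_\sigma(u)$ with $\freq_{\sigma_k}((u))$, together with the bookkeeping identity that the number of occurrences of the letter $(u)$ in a word $w\in L(\sigma_k)$ equals the number of occurrences of $u$ in the corresponding word $v\in L(\sigma)$ of length $|w|+k-1$.

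\medskip

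First I would apply the work of Subsection \ref{perron-freq} to $\sigma_k$. Since $\sigma_k$ was shown to be primitive, its matrix $M_k$ is primitive, so Theorem \ref{perron-frob} applies to $\sigma_k$ and yields a Perron eigenvalue $\theta_k$, constants $C_k, r_k$ with $0<r_k<\theta_k$, and right eigenvector entries $r_{(u)}$. The conclusion of that subsection, applied verbatim to $\sigma_k$, gives that for every letter $(u)\in A_k$ the frequency $\freq_{\sigma_k}((u))$ exists, equals $r_{(u)}$, and satisfies, for every word $w\in L(\sigma_k)$,
\begin{align*}
\bigl|\, |w|_{(u)} - \freq_{\sigma_k}((u))\,|w| \,\bigr| \leq D_k\, |w|^{\alpha_k}
\end{align*}
for suitable constants $D_k$ and $\alpha_k = \log r_k/\log\theta_k < 1$.

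\medskip

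Next I would translate this estimate back to $\sigma$. Given $v\in L(\sigma)$ with $|v|>|u|$, set $w=(v_{[1,k]})(v_{[2,k+1]})\cdots(v_{[n,n+k-1]})$ where $n=|v|-k+1$; by the characterization of $L(\sigma_k)$ recalled before \eqref{frequence-egales} we have $w\in L(\sigma_k)$, and $|w| = |v|-k+1 = |v|-|u|+1$. The occurrence-counting identity gives $|w|_{(u)} = |v|_u$, and \eqref{frequence-egales} gives $\freq_{\sigma_k}((u)) = \freq_\sigma(u)$. Substituting these three equalities into the displayed estimate above converts it directly into
\begin{align*}
\bigl|\, |v|_u - \freq_\sigma(u)\,(|v|-|u|+1) \,\bigr| \leq D_k\,(|v|-|u|+1)^{\alpha_k},
\end{align*}
which is exactly the assertion of the proposition with $D=D_k$ and $\alpha=\alpha_k$. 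This simultaneously establishes existence of $\freq_\sigma(u)$ (inherited from existence of $\freq_{\sigma_k}((u))$) and the quantitative error bound.

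\medskip

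I expect the main obstacle to be purely organizational rather than mathematical: one must be careful that $w$ is nonempty and lies in $L(\sigma_k)$ (which requires $n\geq 1$, i.e.\ $|v|\geq k$, guaranteed by the hypothesis $|v|>|u|=k$), and that the index conventions in the definition of $\sigma_k$ match those in the occurrence count so that $|w|_{(u)}=|v|_u$ holds without an off-by-one discrepancy. Since the two substitutions $\sigma$ and $\sigma_k$ have the same growth, one may also note $\theta_k=\theta$ via \eqref{theta=theta2}, though this identification is not needed for the statement.
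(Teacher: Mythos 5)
Your proposal is correct and is essentially identical to the paper's proof, which consists of the single sentence ``applying the results of the previous subsection to $\sigma_k$ for all $k$'': both arguments reduce the word case to the letter case via the primitive substitution $\sigma_k$, using \eqref{frequence-egales} and the occurrence-counting correspondence $|w|_{(u)}=|v|_u$ with $|w|=|v|-|u|+1$. You have merely spelled out the bookkeeping that the paper leaves implicit.
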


\begin{coro}
The frequency of $u$ exists for all $u\in L (\sigma )$.
\end{coro}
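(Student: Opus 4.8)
The plan is to read the existence of the limit directly off the quantitative estimate in Proposition~\ref{prop-freq}, so that almost all of the work has already been done. First I would recall that, since $\sigma$ is primitive, $L(\sigma) = L(\x)$ for every fixed point $\x$, so the limit defining $\freq_\sigma(u)$ is taken over words $v \in L(\sigma)$ of increasing length; such $v$ of arbitrarily large length exist by Condition~(2) in the definition of a substitution. Thus the limit is being computed over a genuinely unbounded family of lengths.

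Next I would relate the counting function appearing in the definition of the frequency to the quantity $|v|_u$ used in the proposition. The set $\{0 \le i \le |v|-|u|-1 ; u = v_{[i,i+|u|-1]}\}$ differs from the full set of occurrences of $u$ in $v$ (which is what $|v|_u$ counts) only by the possible occurrence at the extreme position $i = |v|-|u|$; hence the two counts differ by at most $1$. After dividing by $|v|$, this discrepancy contributes a term of size at most $1/|v|$, which vanishes in the limit.

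Then I would divide the inequality of Proposition~\ref{prop-freq} by $|v|$ to obtain
$$
\left| \frac{|v|_u}{|v|} - \freq_\sigma(u)\,\frac{|v|-|u|+1}{|v|} \right| \le D\,\frac{(|v|-|u|+1)^\alpha}{|v|}.
$$
Letting $|v| \to \infty$ with $u$ fixed, the factor $\frac{|v|-|u|+1}{|v|}$ tends to $1$, and the right-hand side tends to $0$ precisely because $\alpha < 1$ forces $(|v|-|u|+1)^\alpha$ to grow strictly more slowly than $|v|$. Combining this with the boundary estimate of the previous paragraph shows that $\frac{1}{|v|}\#\{0 \le i \le |v|-|u|-1 ; u = v_{[i,i+|u|-1]}\}$ converges to $\freq_\sigma(u)$, which is exactly the assertion that the frequency exists.

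There is essentially no obstacle here: the genuine content is the sublinear error bound with exponent $\alpha < 1$ furnished by Proposition~\ref{prop-freq}, and this corollary merely extracts the resulting limit. The only points requiring a moment's care are the harmless off-by-one adjustment between the two counting conventions and the observation that it is $\alpha < 1$, rather than merely $\alpha \le 1$, that makes the error term negligible after normalising by $|v|$.
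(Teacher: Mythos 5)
Your argument is correct and is exactly the route the paper intends: the corollary is stated as an immediate consequence of Proposition~\ref{prop-freq}, and you simply spell out the normalisation by $|v|$ and the role of $\alpha<1$, together with the harmless off-by-one discrepancy in the counting convention. Nothing further is needed.
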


In \cite{HZ} is proved the following theorem which is central in the present paper.
They use the recognizability property of primitive substitutions.
We will give a different proof not using this property but Proposition \ref{linrec}.

\begin{theo}
\label{mesure-cyl}
Let $\theta $ be the Perron eigenvalue of $\sigma$.
There exists a finite set $F\subset \RR$ such that 
for all $n\in \NN$ there exists $k\in \NN$ satisfying

$$
\left\{
\freq_\sigma (u ) ; u\in L(\sigma ), |u|=n 
\right\}
\subset 
\left\{
s\theta^{-k} ; s\in F
\right\} .
$$
\end{theo}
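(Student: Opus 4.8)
The plan is to fix $n$, choose a scale $k$ by desubstituting just enough that words of length $n$ become ``local'', and then to read off every length-$n$ frequency from the finitely many length-$2$ frequencies, paying a single factor $\theta^{-k}$. The key input beyond the frequency computations of Section~\ref{perron-freq} is the uniform control of multiplicities coming from linear recurrence.

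First I would set $I_k=\min_{a\in A}|\sigma^k(a)|$ and $S_k=\max_{a\in A}|\sigma^k(a)|$, and let $k$ be the least integer with $I_k\geq n$ (this exists since $|\sigma^k(a)|\to\infty$). Since $\x=\sigma^k(\x)$, every position of $\x$ lies in a unique block $\sigma^k(\x_j)$; because $n\leq I_k$, any occurrence of a word $u$ of length $n$ starting in block $j$ reaches at most into block $j+1$, so it is determined by the length-$2$ factor $\x_j\x_{j+1}$ together with an offset inside $\sigma^k(\x_j)$. Writing $N_k(u,bc)$ for the number of offsets $o\in[0,|\sigma^k(b)|)$ with $u=(\sigma^k(bc))_{[o,o+n-1]}$, grouping occurrences by the length-$2$ factor in which they start, dividing by the length $|\sigma^k(\x_{[0,L)})|$ and letting $L\to\infty$ yields the identity
\[
\freq_\sigma(u)\,\theta^k=\sum_{bc}N_k(u,bc)\,\freq_\sigma(bc),
\]
the sum running over the finitely many length-$2$ words $bc\in L(\sigma)$; here I use that $|\sigma^k(\x_{[0,L)})|/L\to\theta^k$ and $|\x_{[0,L)}|_{bc}/L\to\freq_\sigma(bc)$, both of which follow from Section~\ref{perron-freq}.

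The crux is to bound $N_k(u,bc)$ by a constant independent of $n$ and $u$, and this is exactly where I would invoke linear recurrence. By Proposition~\ref{sublinrec} the sequence $\x$ is linearly recurrent with some constant $K$, so by Proposition~\ref{linrec}(3) two successive occurrences of $u$ differ by more than $|u|/K=n/K$. Hence $N_k(u,bc)$ is at most the number of occurrences of $u$ in a word of length $2S_k$, namely at most $2KS_k/n+1$. Using \eqref{iksk} one has $S_k\leq CI_k$, and for $k\geq 1$ the minimality of $k$ gives $I_{k-1}<n$ together with $I_k\leq CS_1 I_{k-1}<CS_1 n$ (where $S_1=\max_b|\sigma(b)|$); therefore $N_k(u,bc)\leq 2KC^2S_1+1=:N_{\max}$, a bound depending only on $\sigma$.

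Finally I would conclude by taking
\[
F=\Big\{\textstyle\sum_{bc}m_{bc}\,\freq_\sigma(bc)\ :\ m_{bc}\in\{0,\dots,N_{\max}\}\Big\},
\]
which is finite since there are finitely many length-$2$ words and each coefficient ranges over a finite set (the finitely many $n$ forcing $k=0$ are handled directly, their frequencies being letter frequencies). The displayed identity then shows $\freq_\sigma(u)=s\theta^{-k}$ with $s=\sum_{bc}N_k(u,bc)\freq_\sigma(bc)\in F$, and $k$ depends only on $n$, as required. The main obstacle is precisely the uniform boundedness of the multiplicities $N_k(u,bc)$; everything else is bookkeeping, and it is at this point that Proposition~\ref{linrec} substitutes for the recognizability argument used in \cite{HZ}.
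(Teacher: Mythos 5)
Your proof is correct and follows essentially the same route as the paper: the same choice of $k$ as the least integer with $I_k\geq n$, the same reduction of length-$n$ frequencies to length-$2$ frequencies with multiplicities bounded via Proposition~\ref{linrec}(3), and the same final set $F$. The only difference is presentational: you derive the identity $\freq_\sigma(u)\theta^k=\sum_{bc}N_k(u,bc)\freq_\sigma(bc)$ by decomposing the fixed point $\x=\sigma^k(\x)$ into blocks and letting $L\to\infty$, whereas the paper obtains the equivalent identity (with $N_k(u,ab)=|\sigma^k(ab)|_u-|\sigma^k(b)|_u$) by expanding $|\sigma^{m+k}(c)|_u$ over the $2$-blocks of $\sigma^m(c)$ and letting $m\to\infty$.
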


\begin{proof}
We suppose $\sigma$ is linearly recurrent with the constant $K$.
Let $\theta_2$ be the Perron eigenvalue of $\sigma_2$.
From \eqref{theta=theta2} and Perron Theorem we deduce that $\theta_2 = \theta$. 
Let $M_2$ be the incidence matrix of $\sigma_2$.
From \eqref{frequence-egales} we know that $\freq_\sigma (u) = \freq_{\sigma_2} ((u))$ for all $u\in A_2$.
Subsection \ref{perron-freq} and Theorem \ref{perron-frob} imply that  
$(\freq_{\sigma_2} ((u)) ; (u) \in A_2)$ is the unique right eigenvector of $M_2$ (for the eigenvalue $\theta_2$) with 
$\sum_{ (u) \in A_2 } \freq_{\sigma_2} ((u)) = 1$.

Let $C$ be the constant defined as in \eqref{iksk}.
Let $u\in L(\sigma )$ be a word of length $n$ and $k$ the smallest integer such that 

\begin{align*}
|u| = n \leq I_k = \min_{a\in A} |\sigma^k (a)|
\end{align*}

Let $B$ be the set of words $(ab)\in A_2$ such that $u$ has an occurrence in $\sigma^ k( ab)$.
The choice of $k$ implies this set is non-empty.
Let $(ab)\in B$ and $M = \max \{ |\sigma (c)| , c\in A \}$.
From Proposition \ref{linrec} it follows that

\begin{align*}
|\sigma^k (ab)|_u 
\leq 
\frac{|\sigma^k (ab) |}{|u|/K}
\leq 
\frac{2KMCI_{k-1}}{|u|}
\leq 
2KMC .
\end{align*}

Moreover from \eqref{sigman} we have that

$$
\lim_{m\to \infty }\frac{|\sigma^{m+k} (a) |}{|\sigma^{m} (a) |} = \theta^{k} 
$$

and from Proposition \ref{prop-freq}, for all $c\in A$,

$$
\freq_\sigma (u) 
= 
\lim_{m\to \infty} 
\frac{|\sigma^{m+k} (c)|_u}{|\sigma^{m+k} (c)|} .
$$

Let $a'b'$ be the last word of length two of $\sigma^m (c)$.
Then,

\begin{align*}
\frac{|\sigma^{m+k} (c)|_u}{|\sigma^{m+k} (c)|}
& =
\frac{\sum_{ab\in A_2} \left( |\sigma^k (ab)|_u - |\sigma^k (b)|_u \right) |\sigma^m (c)|_{ab} + |\sigma^k (b')|_u}{|\sigma^{m+k} (c)|} \\
& =
\sum_{ab\in A_2} \left( |\sigma^k (ab)|_u - |\sigma^k (b)|_u \right) 
\frac{|\sigma^m (c)|_{ab}}{|\sigma^m (c)|} \frac{|\sigma^{m} (c)| }{|\sigma^{m+k} (c)|}
+ \frac{|\sigma^k (b')|_u}{|\sigma^{m+k} (c)|} \\
& \longrightarrow_{m\to \infty } \sum_{ab\in A_2} \left( |\sigma^k (ab)|_u - |\sigma^k (b)|_u \right) \freq_\sigma (ab)\theta^{-k} .
\end{align*}

Consequently, it suffices to take

$$
F = \left\{ \sum_{ab \in A_2} j_{ab} \freq_\sigma (ab);  j_{ab} \in [0 , 2KMC]\cap \NN  , ab\in A_2 \right\}
$$

which is a finite set.
\end{proof}

\section{Cobham's theorem for minimal substitutive systems}
\label{section-primitive}

In this section we prove the following theorem.

\begin{theo}
\label{cobham-prim}
Let $\sigma$ and $\tau$ be two primitive substitutions whose dominant eigenvalues are respectively $\alpha$ and $\beta$.
Suppose that $(X_\sigma , T)$ and $(X_\tau , T)$ has a common factor $(Y,T)$.
If $Y$ is non-periodic then $\alpha$ and $\beta$ are multiplicatively dependent.
\end{theo}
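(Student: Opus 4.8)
The key tool is Theorem~\ref{mesure-cyl}: for a primitive substitution $\sigma$ with Perron eigenvalue $\alpha$, the frequencies of words of a fixed length all lie in a set of the form $\{s\alpha^{-k} ; s\in F\}$ with $F$ finite. The plan is to transport this frequency information to the common factor $(Y,T)$ and then derive a number-theoretic constraint relating $\alpha$ and $\beta$.

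Let me write the strategy in more detail:

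\textbf{Transporting frequencies to the factor.} First I would use the factor maps $\phi_\sigma : X_\sigma \to Y$ and $\phi_\tau : X_\tau \to Y$ to relate frequencies of words in $Y$ to frequencies in $X_\sigma$ and $X_\tau$. Since $Y$ is a factor of a linearly recurrent (hence uniformly recurrent) minimal system, $Y$ is itself minimal, so frequencies of words in $Y$ are well defined. A crucial point is that a factor map of subshifts is given, up to a shift, by a \emph{sliding block code}: there is a radius $R$ and a local rule so that the letter $(\phi_\sigma \x)_i$ depends only on $\x_{[i-R, i+R]}$. Consequently, for any word $v$ in $L(Y)$, its frequency $\freq_Y(v)$ equals a finite nonnegative-integer linear combination of frequencies $\freq_\sigma(w)$ of words $w$ in $L(X_\sigma)$ of length $|v| + 2R$ (one term for each preimage block mapping onto $v$). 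By Theorem~\ref{mesure-cyl} applied to $\sigma$, each such $\freq_\sigma(w)$, and hence $\freq_Y(v)$, lies in a set of the form $\{ s\alpha^{-k} ; s\in F_\sigma \}$ for some finite $F_\sigma$ and some $k = k(|v|)$ depending only on the length of $v$.

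\textbf{Two expressions for the same frequencies.} The same argument run through $\phi_\tau$ shows that each frequency $\freq_Y(v)$ of a word of length $n$ in $L(Y)$ also lies in $\{ t\beta^{-l} ; t\in F_\tau \}$ for finite $F_\tau$ and $l = l(n)$. So every frequency of a length-$n$ word in the factor is simultaneously of the form $s\alpha^{-k}$ and $t\beta^{-l}$ with $s\in F_\sigma$, $t\in F_\tau$. The idea is now to let $n\to\infty$. Since $Y$ is non-periodic, its complexity grows without bound (by the Morse--Hedlund fact quoted after Proposition~\ref{linrec}), so the smallest nonzero frequency of a length-$n$ word tends to $0$; concretely, one can show using the sliding-block description and linear recurrence that $k(n)\to\infty$ and $l(n)\to\infty$. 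Choosing a fixed nonzero frequency value $f = s\alpha^{-k} = t\beta^{-l}$ and comparing as $n$ varies, I would extract infinitely many relations $s\alpha^{-k} = t\beta^{-l}$ with $(s,t)$ ranging over the finite set $F_\sigma\times F_\tau$ but $(k,l)$ unbounded. By pigeonhole, a single pair $(s,t)$ with $s,t\neq 0$ occurs for two distinct exponent pairs $(k_1,l_1)\neq(k_2,l_2)$, which forces
\[
\alpha^{k_2 - k_1} = \beta^{l_2 - l_1}
\]
with $(k_2-k_1, l_2-l_1)\neq (0,0)$; that is exactly multiplicative dependence of $\alpha$ and $\beta$.

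\textbf{Where the difficulty lies.} The main obstacle is making the sliding-block/preimage-counting argument yield frequencies that stay honestly inside a \emph{single-exponent} set $\{s\alpha^{-k}\}$ with $k$ controlled by length, rather than a messier finite union over several exponents. The factor map introduces a bounded number of preimage blocks of a slightly longer length, and I must check that Theorem~\ref{mesure-cyl} (which fixes one $k$ per length $n$) survives this: since all preimage words have the \emph{same} length $n+2R$, they share a common exponent $k(n+2R)$, so the linear combination remains in $\{s\alpha^{-k(n+2R)} ; s\in F\}$ after enlarging the finite coefficient set. The second delicate point is the lower bound forcing $k(n),l(n)\to\infty$: I would argue that if $k(n)$ stayed bounded, then all length-$n$ frequencies would lie in a fixed finite set independent of $n$, bounded below by a positive constant, contradicting that a non-periodic minimal subshift has arbitrarily many distinct words of length $n$ (whose frequencies sum to $1$ and hence cannot all exceed a fixed positive bound). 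Once $k(n),l(n)\to\infty$, the pigeonhole step is routine.
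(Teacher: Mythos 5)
Your proposal follows essentially the same route as the paper: transport the single-exponent frequency sets of Theorem~\ref{mesure-cyl} to $(Y,T)$ via the sliding block code and the uniformly bounded preimage count (the paper's Proposition~\ref{bounded-preimages} and Theorem~\ref{mesure-cyl-fact}), observe that nonzero frequencies of length-$n$ words tend to $0$, and conclude by pigeonhole on the finite set $F_\sigma\times F_\tau$ to get $\alpha^{k'-k}=\beta^{l'-l}$ with nonzero exponents. The only cosmetic difference is that you force the frequencies to $0$ via Morse--Hedlund complexity growth, whereas the paper uses the linear-recurrence bound $\freq_Y(u)\leq 2K/|u|$ from Lemma~\ref{mainlemme}; both work.
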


Theorem \ref{cob:systdyn} is clearly a corollary of Theorem \ref{cobham-prim}.

\subsection{Preimages of factor maps of LR subshifts.}

Let $\phi$ be a factor map from the subshift
$(X,T)$ on the alphabet $A$ onto the subshift $(Y,T)$ on the
alphabet $B$.
If there exists a {\bf $r$-block map} $f: A^{2r+1}
\rightarrow B$ such that $(\phi (x))_i = f(x_{[i-r,i+r]})$ for
all $i\in \NN$ and $x\in X$, 
we shall say that $f$ is a {\bf block map
associated to $\phi$}, 
that $f$ {\bf defines} $\phi $
and that $\phi $ is a {\bf sliding block code}.
The theorem of Curtis-Hedlund-Lyndon (Theorem 6.2.9 in \cite{LM}) asserts
that factor maps are {\bf sliding block codes}.

If $u= u_0 u_1 \cdots u_{n-1}$ is a word of length $n\geq 2r+1$ we define $f(u)$ by $(f(u))_i = f(u_{[i,i+2r]})$, 
$i\in \{ 0,1,\cdots , n-2r-1 \}$.

Let $C$ denote the alphabet $A^{2r+1}$ and $Z=\{
((x_{[-r+i,r+i]}) ; i\in \NN)\in C^{\NN }; (x_n ; n\in  
\NN ) \in X \}$. It is easy to check that the subshift $(Z,T)$
is isomorphic to $(X,T)$ and that $f$ induces a 0-block map from $C$
onto $B$ which defines a factor map from $(Z,T)$ onto $(Y,T)$.

The next lemma was first proved in \cite{Du4}.

\begin{lemma}
\label{mainlemme}
Let $(X,T)$ be a non-periodic LR subshift (for the constant $K$) and $(Y,T)$ be a non-periodic subshift factor of $(X,T)$. 
Then
 $(Y,T)$ is LR.
Moreover, there exists $n_1$ such that:  
For all $u\in L(Y) $ with $|u|\geq n_1$ we have
\begin{enumerate}
\item
\label{point1}
$|u|/2K \leq |w|\leq 2K|u|$ for all $w\in  \R_u (Y)$ ; 
\item
\label{point2}
$\# (\R_u (Y)) \leq 2K(2K+1)^2$.
\end{enumerate}
\end{lemma}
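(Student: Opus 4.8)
I need to show that a non-periodic factor $(Y,T)$ of a non-periodic linearly recurrent subshift $(X,T)$ is itself linearly recurrent, with explicit control on the return word lengths and counts. The strategy is to transport the known linear recurrence of $X$ (Propositions \ref{sublinrec} and \ref{linrec}) through the factor map $\phi$. By the Curtis--Hedlund--Lyndon theorem, $\phi$ is a sliding block code defined by some $r$-block map $f$, and by passing to the higher block presentation $(Z,T)$ described just before the lemma, I may assume without loss of generality that $\phi$ is given by a $0$-block (letter-to-letter) map. This normalization is what makes the length comparisons between factors of $Y$ and their preimages in $X$ clean, since a factor of $Y$ of length $n$ now comes from a factor of $X$ of the same length $n$.

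\textbf{The core comparison.}
First I would fix a word $u\in L(Y)$ with $|u|$ large and analyze its return words. The key point is to relate return words to $u$ in $Y$ with return words to preimage words in $X$. Given an occurrence of $u$ in some $y\in Y$, I choose a preimage $x\in X$ with $\phi(x)=y$; a return word $w\in\R_u(Y)$ between two consecutive occurrences of $u$ lifts to a factor of $x$. Two consecutive occurrences of $u$ in $y$ need not correspond to consecutive occurrences of a single word in $x$, so the naive lift does not immediately give a return word in $X$. To handle this I would pick a sufficiently long word $\tilde u\in L(X)$ that maps onto $u$ (or onto a word containing $u$), control the gaps of $\tilde u$ in $X$ using linear recurrence of $X$, and push these gap bounds forward through $\phi$. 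Since $\phi$ is letter-to-letter after the reduction, occurrences and gaps are preserved up to the fixed additive constant coming from the block length, so gaps of $u$ in $Y$ are comparable to gaps of $\tilde u$ in $X$, hence bounded by $K|\tilde u|\approx K|u|$ from above; this yields the upper bound $|w|\le 2K|u|$ in point \eqref{point1}, the factor $2$ absorbing the additive constants once $|u|\ge n_1$ for suitable $n_1$. The lower bound $|u|/2K\le|w|$ follows the same way from part (3) of Proposition \ref{linrec} applied in $X$, again with the constant-shift loss absorbed into the factor $2$.

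\textbf{Counting return words and the main obstacle.}
Once the two-sided length bound in point \eqref{point1} is established, point \eqref{point2} follows by the same counting argument as in part (4) of Proposition \ref{linrec}: any word of length $(2K+1)^2|u|$ in $L(Y)$ must contain every return word to $u$, and since each return word has length at least $|u|/2K$, there can be at most $2K(2K+1)^2$ of them. Linear recurrence of $Y$ is then immediate, since the bound $|w|\le 2K|u|$ holds uniformly for $|u|\ge n_1$, and finitely many short words are handled by enlarging the constant. The main obstacle I anticipate is the bookkeeping in the core comparison: because the factor map is only surjective and not injective, a single return word in $Y$ may lift to several distinct factors in $X$, and consecutive occurrences of $u$ in $Y$ may come from preimage occurrences that are not themselves consecutive. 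The care needed to choose the right preimage word $\tilde u$ and to show that the additive constants lost in passing between $X$ and $Y$ are absorbed by replacing $K$ with $2K$ — uniformly once $|u|\ge n_1$ — is where the real work lies, and it is exactly this that forces the threshold $n_1$ and the doubling of the recurrence constant in the statement.
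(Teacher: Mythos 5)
Your overall architecture for the upper bound and the counting is the same as the paper's: pick a preimage word $v\in L(X)$ of $u$ under the block map (so $|v|=|u|+2r$), bound the gaps of $v$ in $X$ by $K|v|$ using linear recurrence of $X$, and note that every occurrence of $v$ projects to an occurrence of $u$, so consecutive occurrences of $u$ in $Y$ are at distance at most $K(|u|+2r)\leq 2K|u|$ once $|u|\geq 2r$; point \eqref{point2} then follows by the standard covering count. That part is fine.

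There is, however, a genuine gap in your lower bound. You assert that $|u|/2K\leq|w|$ ``follows the same way from part (3) of Proposition \ref{linrec} applied in $X$,'' but the projection argument only runs in one direction. Occurrences of $u$ in $y$ form a \emph{superset} of the projections of occurrences of any single preimage $\tilde u$: two consecutive occurrences of $u$ in $y$ may arise from occurrences of two \emph{different} words $v_1\neq v_2$ in $f^{-1}(\{u\})$, and Proposition \ref{linrec}(3) gives no lower bound on the distance between occurrences of distinct words in $X$. So a lower bound on the return time of $\tilde u$ in $X$ tells you nothing about how small a return word to $u$ in $Y$ can be. (Note that the paper's Proposition \ref{bounded-preimages} deduces the separation between occurrences of distinct preimages \emph{from} point \eqref{point1} of this lemma, not the other way around, so your intended direction would also be circular if formalized that way.) The correct argument, and the one the paper uses, is intrinsic to $Y$: once the upper bound shows that $Y$ is linearly recurrent with constant $2K$ for words of length at least $n_1$, the non-periodicity of $Y$ yields the complexity bound and the $(2K+1)$-power-freeness of Proposition \ref{linrec}(1)--(2) for $Y$ itself; then a return word $w\in\R_u(Y)$ with $|w|\leq|u|/2K$ would make $w^{2K}$ a prefix of $u$ and hence put $w^{2K+1}$ in $L(Y)$, a contradiction. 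You need to replace your lift-to-$X$ step for the lower bound by this power-freeness argument inside $Y$.
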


\begin{proof}
 We denote
by $A$ the alphabet of $X$ and by $B$ the alphabet of $Y$. Let
$\phi:(X,T) \rightarrow (Y,T)$ be a factor map. Let $f: A^{2r+1}
\rightarrow B$ be a block map associated to $\phi$. 

Let $u$ be a word of $ L(Y)$ and $v\in L(X)$ be such that $f (u) = v$. 
We have 
$|u| = |v|-2r$. 
If $w$ is a return word to $u$ then 
$|w|\leq \max \{ |s|; s\in\R_v \} \leq K|v| \leq K(|u| +2r)$.  
Then, the
subshift $(Y,T)$ is linearly recurrent with the constant $K(2r+1)$.
Moreover: For all $u\in  L(Y) $
such that $|u| \geq n_1 = 2r$, and for all $w\in\R_u$, $|w|\leq 2K|u|$.
To obtain the other inequality it suffices to proceed as in the proof of Proposition \ref{linrec}.

Let $u\in L(Y)$ with $|u|\geq n_1$ and $v \in L(Y)$ be a word of length $(2K+1)^2|u|$. Each word of length $(2K+1)|u|$ occurs in $v$,
hence each return word to $u$ occurs in $v$. It follows from 
the previous assertion that in $v$ occurs at the most 
$2K(2K+1)^2|u|/|u|= 2K(2K+1)^2$ return words to $u$.
\end{proof}

\begin{prop}
\label{bounded-preimages}
Let $(X,T)$ be a non-periodic LR subshift (for the constant $K$). 
Let $\phi : (X,T) \rightarrow (Y,T)$ be a factor map such that $(Y,T)$ is a non-periodic subshift
and $f: A^{2r+1} \to B$ be a $r$-block map defining $\phi$.
Then there exists $n_0$ such that  for all $u\in Y$, with $|u| \geq n_0$, we have 

$$
\# ( f^{-1} (\{ u \} ))\leq 4K(K+1) .
$$

\end{prop}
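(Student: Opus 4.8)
The plan is to bound, for a long word $u \in L(Y)$, the number of words $v \in L(X)$ of length $|u|+2r$ with $f(v) = u$. Since $\phi$ is defined by the $r$-block map $f$, a preimage of $u$ under $\phi$ (viewed locally) corresponds exactly to such a word $v$, so $\#(f^{-1}(\{u\}))$ is the number of these $X$-preimages. The key idea is that two different preimages $v, v'$ of the same $u$ must \emph{differ} somewhere, yet both map down to the same $u$; I would like to convert each such preimage into a bounded combinatorial datum so that the total count is controlled by the linear-recurrence constant $K$.

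\smallskip

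\textbf{Using return words in $X$ to organize preimages.}
First I would fix a word $u \in L(Y)$ with $|u|$ large (to be specified via $n_0$) and pick one distinguished preimage $v_0 \in L(X)$ with $f(v_0) = u$. By Proposition \ref{linrec}(3) applied in $X$, every return word $w \in \R_{v_0}(X)$ satisfies $|w| > |v_0|/K$, so the number of occurrences of $v_0$ inside a controlled window is small; by Proposition \ref{linrec}(4) the alphabet of return words is itself of bounded size, $\#\R_{v_0}(X) \le K(K+1)^2$. The strategy is then to show that any other preimage $v'$ of $u$ is forced, by the block-map structure, to sit in a definite positional relationship with $v_0$ inside some longer word of $L(X)$: since $f(v_0) = f(v') = u$ and $f$ is an $r$-block map, the two words $v_0$ and $v'$ agree on the images of all their length-$(2r+1)$ factors, which is a strong rigidity constraint. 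I would exploit this to show that each preimage is determined, up to the return-word data of $X$, by where its occurrences of some fixed factor land, and that there are at most $\#\R_u(Y)$-many admissible ``slots.''

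\smallskip

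\textbf{Combining the two bounds.}
Concretely, I expect the count to factor as a product of two bounded quantities: the number of return words to $u$ in $Y$, controlled by Lemma \ref{mainlemme}(2) to be at most $2K(2K+1)^2$, and the number of $X$-preimages compatible with a single return structure, which I expect to be bounded by a small constant coming from Proposition \ref{linrec}. The target bound $4K(K+1)$ suggests the decomposition is cleaner than a naive product: one factor of roughly $2K$ should come from the stretch $|w| \le 2K|u|$ of return words to $u$ in $Y$ (Lemma \ref{mainlemme}(1)), and the other factor of roughly $2(K+1)$ from the number of distinct ways a preimage in $X$ can realize the same $Y$-word over a window of this length, using that $X$ has at most $Kn$ factors of length $n$ (Proposition \ref{linrec}(1)). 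I would choose $n_0$ large enough that all the statements of Lemma \ref{mainlemme} apply to $u$ and to the relevant $Y$-words.

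\smallskip

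\textbf{The main obstacle.}
The hard part will be the rigidity argument that turns ``$v$ and $v'$ have the same $f$-image'' into a bound on the number of preimages, because \emph{a priori} two preimages could differ in many places while still collapsing to the same $u$. The crucial leverage is non-periodicity of $(X,T)$: if there were too many preimages, one could produce two distinct preimages agreeing on a long prefix and a long suffix but differing in the middle, and by the linear-recurrence/return-word structure this near-coincidence would force a periodic pattern in $X$, contradicting non-periodicity. Making this pigeonhole precise — quantifying ``too many'' as exceeding $4K(K+1)$ and extracting the forbidden periodicity from it via Proposition \ref{linrec}(2) — is the technical core of the proof.
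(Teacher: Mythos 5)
There is a genuine gap: your proposal sets up the right counting problem (preimages of $u$ are words of $L(X)\cap A^{|u|+2r}$) but then defers the entire core of the argument to a ``rigidity''/pigeonhole step that you acknowledge you have not made precise, and that step is in fact not the right mechanism. No rigidity between two distinct preimages is needed, and no periodicity contradiction is extracted from having too many preimages. The two ingredients you are missing are much more direct. First, \emph{separation of occurrences}: if $v,v'\in f^{-1}(\{u\})$ occur at positions $i\neq j$ in some $x\in X$, then $u$ occurs at two distinct positions of $\phi(x)\in Y$, so by Lemma \ref{mainlemme}(1) (the \emph{lower} bound $|w|\geq |u|/2K$ on return words to $u$ in $Y$) we get $|i-j|> |u|/2K$. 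This is where the factor map is used --- not to constrain how two preimages can differ internally, but to push their occurrences apart. Second, \emph{co-occurrence in one window}: by linear recurrence of $X$ with constant $K$, every word of $L(X)$ of length $(K+1)(|u|+2r)$ contains an occurrence of every word of $L(X)\cap A^{|u|+2r}$, hence of every element of $f^{-1}(\{u\})$ simultaneously. Dividing the window length by the minimal gap gives $\#(f^{-1}(\{u\}))\leq (K+1)(|u|+2r)\cdot 2K/|u|\leq 4K(K+1)$ once $|u|\geq n_0=\max(2r+1,n_1)$, since then $(|u|+2r)/|u|\leq 2$.

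Your proposed decomposition into ``number of return words to $u$ in $Y$'' times ``number of preimages per return structure'' does not lead to the stated constant (it would give something like $2K(2K+1)^2$ times another factor), and the envisioned argument that an excess of preimages forces a periodic pattern in $X$ via Proposition \ref{linrec}(2) is both unproven in your sketch and unnecessary; non-periodicity enters only through Lemma \ref{mainlemme}, which requires $(Y,T)$ non-periodic to get the return-word lower bound. I would encourage you to drop the return-words-to-$v_0$-in-$X$ machinery entirely and redo the count with the two bullet points above.
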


\begin{proof}
Let $n_1$ be the integer given by Lemma \ref{mainlemme}. 
We set $n_0 = \max (2r +1, n_1)$. 
Let $u\in L(Y)$ such that $|u| \geq  n_0$. The difference between two distinct occurrences of elements of $ f^{-1} (\{ u \} )$ is greater than $|u|/2K $. 
Moreover $ f^{-1} (\{ u \} ) $ is included in $ L (X)\cap A^{|u|+2r} $ and each word of length $(K+1)(|u|+2r)$ has an occurrence of each word of $L (X)\cap A^{|u|+2r}$. Therefore
$$
\# ( f^{-1} (\{ u \} )) \leq \frac{(K+1)(|u|+2r)}{|u|/2K} \leq 4K (K+1).
$$ 
This completes the proof.
\end{proof}

\subsection{Frequencies in the factors}

Let $\sigma$ be a primitive substitution with dominant eigenvalue $\alpha$ and linearly recurrent constant $K$, 
and, $(Y,T)$ a non-periodic factor of $(X_\sigma , T)$.
Let $\phi : X_\sigma \to Y$ be a factor map and $f$ be a $r$-block map that defines $\phi$.
From Theorem \ref{mesure-cyl} we know there exists a finite set $F_\sigma \subset \RR$ such that 
for all $n$ there exists $k\in \NN$ satisfying

\begin{align}
\label{inclusion}
\left\{
\freq_\sigma (v ) ; v\in L(X_\sigma ) , |v|=n
\right\}
\subset 
\left\{
s\theta^k ; s\in F_\sigma
\right\} .
\end{align}

Let $u\in L( Y)$, $|u|=m$, and set $f^{-1} (\{ u \}) = \{ v_1 , \dots , v_l \} \subset L_{|u|+2r} (X_\sigma )$ with $l\leq 4K (K+1)$ (Proposition \ref{bounded-preimages}).
Let $k$ be as in \eqref{inclusion} for $n=|u| + 2r$.
Let $y\in Y$ and $x\in X_\sigma $ such that $\phi (x) = y$.
We remark that 

$$
\lim_{|v|\to \infty,v\in L(Y )} \frac{1}{|v|} \# \left\{ 0\leq i\leq |v|-|u| ; u = v_{[i,i+|u|-1]} \right\} 
$$

exists and is equal to 

$$
\lim_{|w|\to \infty,w\in L(X )} \frac{1}{|w|} \# \left\{ 0\leq i\leq |w|-|u|+2r ; w_{[i,i+|u|+2r-1]} \in \{ v_1, \dots ,v_l \}\right\} 
$$

We denote it $\freq_Y (u)$.
Moreover,

$$
\freq_Y (u) = \sum_{i=1}^l \freq_{X_\sigma} (v_i) \in \left\{
s' \theta^k ; s'\in F'_\sigma
\right\}  ,
$$

where $F'_\sigma$ is the finite set $\left\{ \sum_{i=1}^{4K(K+1)} f_i ; f_i\in F_\sigma , 1\leq i \leq 4K(K+1) \right\}$.
We proved:

\begin{theo}
\label{mesure-cyl-fact}
Let $\theta $ be the Perron eigenvalue of the primitive substitution $\sigma$.
There exists a finite set $F\subset \RR$ such that 
for all non-periodic subshift factor $(Y,T)$ of $(X_\sigma , T)$,
and all $n\in \NN$  there exists $k\in \NN$ satisfying

$$
\left\{
\freq_Y (u ) ; u\in L(Y ), |u|=n 
\right\}
\subset 
\left\{
f\theta^k ; f\in F
\right\} .
$$
\end{theo}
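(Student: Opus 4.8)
The plan is to write each frequency $\freq_Y(u)$ as a sum of boundedly many frequencies of words of $X_\sigma$ and then to invoke Theorem \ref{mesure-cyl}. Fix $\sigma$ with Perron eigenvalue $\theta$, and let $K$ be a linear recurrence constant for $X_\sigma$, which exists by Proposition \ref{sublinrec}. Theorem \ref{mesure-cyl}, rephrased in \eqref{inclusion}, provides a finite set $F_\sigma \subset \RR$ such that for every length $m$ there is an integer $k$ with $\{\freq_\sigma(v) ; v \in L(X_\sigma), |v| = m\} \subset \{s\theta^k ; s \in F_\sigma\}$; from the construction of $F_\sigma$ in the proof of that theorem one has $0 \in F_\sigma$. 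I would then set, once and for all, $F = \{ \sum_{i=1}^{4K(K+1)} s_i ; s_i \in F_\sigma \}$. This is finite and depends only on $\sigma$ (through $K$ and $F_\sigma$) and not on any factor, so it is a legitimate candidate for the set in the statement.

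Now fix a non-periodic factor $(Y,T)$, a factor map $\phi \colon X_\sigma \to Y$, and an $r$-block map $f \colon A^{2r+1} \to B$ defining it. Replacing $X_\sigma$ by the isomorphic system $(Z,T)$ introduced before Lemma \ref{mainlemme}, I may take $\phi$ to be letter-to-letter, so that an occurrence of $u \in L(Y)$, $|u|=n$, at position $i$ in $y = \phi(x)$ is exactly an occurrence at position $i-r$ in $x$ of the length-$(n+2r)$ word $x_{[i-r,\,i+n-1+r]} \in f^{-1}(\{u\})$. The heart of the proof is the identity
\[
\freq_Y(u) \;=\; \sum_{v \in f^{-1}(\{u\})} \freq_{X_\sigma}(v).
\]
To establish it I would argue that the preimages $v \in f^{-1}(\{u\})$ are pairwise distinct words of the common length $n+2r$, so that each position of $x$ carries at most one of them; hence the occurrences of $u$ in a long prefix of $y$ are in bijection, up to an error of bounded size coming from the first $r$ coordinates, with the occurrences of the various $v$ in the corresponding prefix of $x$. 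Dividing by the length and passing to the limit kills the boundary error, which simultaneously proves that $\freq_Y(u)$ exists and gives the displayed identity; the frequencies on the right exist by the corollary following Proposition \ref{prop-freq}.

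Finally I would count and apply Theorem \ref{mesure-cyl}. By Proposition \ref{bounded-preimages} there is an $n_0$ (depending on $Y$) such that for $|u| = n \geq n_0$ the set $f^{-1}(\{u\})$ has at most $4K(K+1)$ elements, all of length $n+2r$. Applying \eqref{inclusion} at the single length $m = n+2r$ then yields one exponent $k$ with $\freq_{X_\sigma}(v) = s_v \theta^k$, $s_v \in F_\sigma$, for every preimage $v$, whence
\[
\freq_Y(u) \;=\; \theta^k \sum_{v \in f^{-1}(\{u\})} s_v .
\]
Padding the sum to exactly $4K(K+1)$ summands with the value $0 \in F_\sigma$ shows that its coefficient lies in $F$, so $\freq_Y(u) \in \{ f\theta^k ; f \in F \}$ with $k$ depending only on $n$ and $Y$, as required. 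I expect the main obstacle to be the frequency identity of the previous paragraph: one must check carefully that distinct preimages never yield overlapping or double-counted occurrences and that the boundary contribution is genuinely negligible in the limit. The decisive structural point is that the bound $4K(K+1)$ on the number of preimages is independent of $Y$ and of the block radius $r$, which is exactly what allows a single finite set $F$ to work for every factor.
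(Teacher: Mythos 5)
Your proposal follows essentially the same route as the paper: the identity $\freq_Y(u)=\sum_{v\in f^{-1}(\{u\})}\freq_{X_\sigma}(v)$, the bound $4K(K+1)$ on the number of preimages from Proposition \ref{bounded-preimages}, and the definition of $F$ as sums of that many elements of $F_\sigma$ are exactly the paper's argument. Your extra care about padding with $0\in F_\sigma$ and about justifying the existence of the limit defining $\freq_Y(u)$ only makes explicit what the paper leaves implicit.
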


\subsection{Proof of Theorem \ref{cobham-prim}}

From Theorem \ref{mesure-cyl-fact} we know there exist two finite sets $F_\sigma$ and $F_\tau$ such that 
for all $n\in \NN$ there exists $k,k'\in \NN$ satisfying

$$
\left\{
\freq_Y (u ) ; u\in L(Y ), |u|=n 
\right\}
\subset 
\left\{
s\alpha^k ; s\in S_\sigma
\right\}
\cap
\left\{
s\beta^{k'} ; s\in S_\tau
\right\}
 .
$$

Using Statement (1) of Lemma \ref{mainlemme} we have that $\freq_Y (u) \leq 2K/|u|$ for all $u\in L (Y)$.
Then,  $\freq_Y (u)$ tends to $0$ when $|u|$ goes to infinity.
Consequently, there exist $u,v \in L(Y)$, $s\in S_\sigma$, $t\in S_\tau$, $k,k'\in \NN$, $k\not = k'$, and $l,l'\in \NN$, $l\not = l'$,
such that 

$$
s\alpha^k = \freq_Y (u) = t\beta^l \hbox{ and } s\alpha^{k'} = \freq_Y (v) = t\beta^{l'} .
$$

We obtain that $\alpha^{k'-k} = \beta^{l'-l}$, which ends the proof.

\section{Cobham's theorem for substitutions of constant length}
\label{section-general}

In this section we prove the substitutive version of Cobham's first theorem.
The arguments presented below also work for a wide class of non-constant length substitutions that are called in \cite{Du5} ``good substitutions''.
We first prove that the letters appear with bounded gaps. 
Then, we could conclude using the previous section.
The sufficient part is easier to establish.

\subsection{Letters and words appear with bounded gaps}

The following theorem is a key argument in what follows.

\begin{theo}
\label{densite}
Let $\alpha$ and $\beta$ be two multiplicatively independent positi\-ve real numbers. 
Then the set
$$
\left\{
\frac{\alpha^n}{\beta^m} ; n,m \in \NN
\right\}
$$
is dense in $\RR^+$. 
\end{theo}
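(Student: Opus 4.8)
The plan is to reduce the density statement to the classical fact that a subgroup of $\RR$ is either discrete (cyclic) or dense, by passing to logarithms. Taking logarithms turns the multiplicative structure into an additive one: the set $\{ \alpha^n / \beta^m ; n,m\in\NN \}$ is dense in $\RR^+$ if and only if its image under $\log$, namely $\{ n\log\alpha - m\log\beta ; n,m\in\NN \}$, is dense in $\RR$. So I would first establish this equivalence (the logarithm is a homeomorphism from $\RR^+$ onto $\RR$) and then work entirely additively with $a = \log\alpha$ and $b = \log\beta$.

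The heart of the matter is the set $G = \{ na - mb ; n,m\in\ZZ \}$, which is an additive subgroup of $\RR$. The multiplicative independence of $\alpha$ and $\beta$ says exactly that $a/b = \log\alpha / \log\beta$ is irrational (as the excerpt notes). I would invoke the standard classification: a nontrivial subgroup of $\RR$ is either of the form $c\ZZ$ for some $c>0$ (discrete) or dense. If $G$ were discrete, say $G = c\ZZ$, then both $a\in G$ and $b\in G$ would be integer multiples of $c$, forcing $a/b$ to be rational, a contradiction. Hence $G$ is dense in $\RR$.

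The remaining obstacle, and the one step that needs genuine care, is that Theorem~\ref{densite} asks for $n,m$ to range over $\NN$ rather than over $\ZZ$: I need density of $\{ na - mb ; n,m\in\NN \}$, not merely of the full subgroup $G$. To bridge this gap I would argue that the half-set still accumulates at $0$ from a controlled side. Concretely, since $G$ is dense there exist, for every $\varepsilon>0$, integers $n,m\in\ZZ$ with $0 < na-mb < \varepsilon$; replacing $(n,m)$ by $(n+kp, m+kq)$ for a suitable fixed pair with $pa-qb$ of known sign does not change the value but can be used to push both coordinates positive, or alternatively one observes that small positive elements of $G$ can be realized with $n,m$ of the same sign and then takes positive integer multiples of such a small generator to sweep out a dense subset of all of $\RR^+$ under the logarithm. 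The key point is that once arbitrarily small strictly positive values $\delta = na - mb$ (with $n,m\in\NN$) are available, their positive integer multiples $k\delta$ are of the form $(kn)a - (km)b$ with $kn, km\in\NN$ and already $\delta\NN$ is $\delta$-dense in $\RR^+$, so letting $\delta\to 0$ yields density of the $\NN\times\NN$ set in all of $\RR$. Transporting back through $\exp$ then gives density in $\RR^+$, completing the proof.
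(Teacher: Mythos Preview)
The paper itself gives no proof here, only the citation to Hardy--Wright; your logarithm-and-subgroup argument is the standard one and is essentially what that reference contains.

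There is, however, a real gap in your last paragraph. From arbitrarily small positive $\delta = na - mb$ with $n,m\in\NN$ you obtain that $\delta\NN$ is $\delta$-dense, but $\delta\NN\subset[0,\infty)$: this yields density of $\{na-mb:n,m\in\NN\}$ only in $[0,\infty)$, hence of $\{\alpha^n/\beta^m\}$ only in $[1,\infty)$ after exponentiating, not in $(0,1)$. You must also produce arbitrarily small \emph{negative} values $\delta'=n'a-m'b$ with $n',m'\in\NN$, so that $\delta'\NN$ covers $(-\infty,0]$. This follows by the same mechanism --- for instance, density of $\{na \bmod b : n\in\NN\}$ in $[0,b)$ gives $na-\lfloor na/b\rfloor b$ arbitrarily small positive and $na-\lceil na/b\rceil b$ arbitrarily small negative, both with coefficients in $\NN$ --- but your write-up stops at the positive half. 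Note also that the argument (and the theorem as literally stated) tacitly requires $\log\alpha$ and $\log\beta$ to have the same sign: if $\alpha>1>\beta>0$ then $\alpha^n/\beta^m\ge 1$ for all $n,m\in\NN$ and density in $\RR^+$ fails, even for multiplicatively independent $\alpha,\beta$ (take $\alpha=2$, $\beta=1/3$). In the paper this is harmless since the result is only applied with $\alpha,\beta>1$, but you should make that hypothesis explicit, as your same-sign analysis relies on it.
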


\begin{proof}
See \cite{HW}.
\end{proof}

Let $p$ and $q$ be two multiplicatively independent Perron numbers. 
Let $\sigma$ (resp. $\tau$) be substitution of constant length $p$ (resp. $q$) defined on the alphabet $A$ (resp. $B$),
with fixed point $\y$ (resp. $\z$).
We suppose there exist two letter to letter morphisms $\phi : A\rightarrow C$ and $\psi : B\rightarrow C $ such that $\phi (\y) = \psi (\z) = \x$.

\begin{lemma}
\label{etoile}
Let $a\in A$ be a letter which has infinitely many occurrences in $\x$. 
There exist a positive integer $l$, a word $u\in A^{*}$ and $v,w\in A^{*}$ such that for all $n\in \NN$ the word 
$$
\sigma^{ln} (u) \sigma^{l(n-1)} (v) \sigma^{l(n-2)} (v) \cdots \sigma^{l} (v) v w a 
$$
is a prefix of $\y$. 
\end{lemma}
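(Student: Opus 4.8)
The idea is to exploit the self-similar structure of the fixed point $\y = \sigma(\y)$ together with the fact that $a$ occurs infinitely often in $\x = \phi(\y)$, hence infinitely often (up to the letter-to-letter map $\phi$) in $\y$. First I would fix a letter $a' \in A$ with $\phi(a') = a$ that occurs infinitely often in $\y$. The key pigeonhole observation is this: consider the prefixes $\sigma^{ln}(c)$ for a well-chosen starting letter $c$ (the first letter of $\y$, so that $c$ is a prefix of $\sigma(c)$ and $\sigma^{ln}(c)$ is a prefix of $\y$ for every $n$). Since the alphabet $A$ is finite, among the letters appearing at controlled positions there must be a repetition that lets me iterate. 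Concretely, I would look at the factorization of $\y$ induced by $\sigma^{l}$: write $\y = \sigma^l(\y)$, so that $\y$ is the concatenation $\sigma^l(\y_0)\sigma^l(\y_1)\cdots$, and track which ``level-$l$ block'' contains an occurrence of $a'$ close enough to the front.

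The heart of the argument is a renewal/pigeonhole step producing the recursive prefix structure. Because $A$ is finite, by the pigeonhole principle there is a length $l$ and a letter appearing as $\y_j = \y_{j'}$ with $j < j'$ at positions that repeat the same local picture under $\sigma^l$; this gives a word $v$ such that $\sigma^l$ acting on the relevant prefix reproduces a shifted copy of itself with an extra $\sigma^l(v)$ block prepended. Unwinding this self-similarity $n$ times yields the telescoping prefix
$$
\sigma^{ln}(u)\,\sigma^{l(n-1)}(v)\,\sigma^{l(n-2)}(v)\cdots\sigma^{l}(v)\,v\,w\,a',
$$
where $u$ is the initial block carrying the recursion, $v$ is the repeated block, and $w$ is a finite ``tail'' word reaching an occurrence of $a'$. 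Applying $\phi$ turns the terminal $a'$ into $a$; since $\phi$ is letter-to-letter and commutes with nothing problematic here, and since we only claim the displayed word (in $A^*$) is a prefix of $\y$, the statement follows for $\y$ with terminal letter $a'$ — and I would state the lemma with $a \in A$ so the terminal letter is literally $a$, matching the displayed formula.

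The main obstacle I anticipate is setting up the pigeonhole so that the \emph{same} word $v$ reappears at every level of the recursion, not merely some word whose length is controlled. The naive argument gives, at each application of $\sigma^l$, a possibly different intervening block; to get a single fixed $v$ one must choose $l$ and the recurrence point so that the configuration ``letter $c$ followed, after stripping one $\sigma^l$-image of $v$, by the same configuration'' is genuinely periodic under the induced map on a finite set of states. I would formalize the state as a pair (current letter, offset of the next targeted $a'$-occurrence within its level-$l$ block) and invoke finiteness of this state space together with Condition~(1) of the substitution definition (so that $\sigma^{ln}(c)$ is always a prefix) to extract an eventually periodic orbit; passing to the periodic part and relabeling gives the fixed $v$. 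The telescoping algebra after that is routine.
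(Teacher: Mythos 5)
Your plan follows essentially the same route as the paper's proof: trace an occurrence of $a$ upward through the $\sigma$-block hierarchy of the fixed point, use finiteness of the alphabet to produce a letter $b$ with $a$ occurring in $\sigma^i(b)$ and $b$ occurring in $\sigma^{j-i}(b)$ (the paper pigeonholes only on the ancestor letter $a_0=a,\ a_1,\ a_2,\dots$ with $a_i$ occurring in $\sigma(a_{i+1})$, not on a (letter, offset) pair), and then telescope using $\sigma(\y)=\y$. The one adjustment to your plan is that the desubstitution state sequence is not the orbit of a self-map of the state space, so pigeonhole yields a single repetition rather than an eventually periodic orbit — but, exactly as in the paper, one coincidence $a_i=a_j=b$ already fixes a word $v_1$ with $\sigma^{j-i}(b)=v_1bv_2$, and iterating $\sigma^{j-i}$ on that distinguished occurrence of $b$ reproduces the same $v_1$ at every level, which is all the telescoping requires.
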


\begin{proof}
Let $a\in A$ be a letter that has infinitely many occurrences in $\x$. We set $a_0 = a$. There exists $a_1 \in A$ which has infinitely many occurrences in $\x$ and such that $a_0$ has an occurrence in $\sigma (a_1)$. In this way we can construct a sequence $(a_i ; i\in \NN)$ such that $a_0 = a$ and $a_i$ occurs in $\sigma ( a_{i+1})$, for all $i\in \NN$. 

There exist $i,j$ with $i<j$ such that $a_i = a_j = b$. It follows that $a$ occurs in $\sigma^i (b)$ and $b$ occurs in $\sigma^{j-i} (b)$. Hence there exist $u_1,u_2,v_1,v_2 \in A^{*}$ such that $\sigma^i (b) = u_1 a u_2$ and $\sigma^{j-i} (b) = v_1 b v_2$. We set $p = j-i$, $v = \sigma ^i (v_1)$ and $w = u_1$. There exists $u^{'}$ such that $u^{'}b$ is a prefix of $\x$. We remark that for all $n\in \NN$ the word $\sigma^n (u^{'}b)$ is a prefix of $\x$ too. We set $u = \sigma^i (u^{'})$. We have $\sigma^{p} (u^{'}b) = \sigma^{p} (u^{'}) v_1 b v_2$. Consequently for all $n\in \NN$
$$
\sigma^{pn} (u^{'}) \sigma^{p(n-1)} (v_1) \sigma^{p(n-2)} (v_1) \cdots \sigma^{p} (v_1) v_1 b 
$$ 
is a prefix of $\sigma^{np} (u^{'}b)$. Then 
$$
\sigma^{pn} (u) \sigma^{p(n-1)} (v) \sigma^{p(n-2)} (v) \cdots \sigma^{p} (v) v w a 
$$
is a prefix of $\sigma^{np+i} (u^{'}b)$ and consequently of $\x$, for all $n\in \NN$. 
\end{proof}

\begin{prop}
\label{bgaps}
The letters of $C$ which have infinitely many occurrences in $\x$ appear in $\x$ with bounded gaps.
\end{prop}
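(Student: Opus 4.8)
The plan is to argue by contradiction: assuming a letter $c\in C$ occurs infinitely often in $\x$ but with \emph{unbounded} gaps, I will produce an occurrence of $c$ sitting inside a block of $\x$ that is forced to be $c$-free, the clash being generated by the multiplicative independence of $p$ and $q$ via Theorem \ref{densite}. Note that a single base cannot do this: the fixed point of $\sigma:a\mapsto aba,\ b\mapsto bbb$ has the letter $a$ occurring infinitely often yet with unbounded gaps (the central blocks $\sigma^k(b)=b^{3^k}$). So the proof must genuinely use both substitutions.

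First I set up the two ingredients. Since $\phi$ is letter to letter and $\phi(\y)=\x$, among the finitely many letters of $\phi^{-1}(c)\subset A$ at least one, say $a$, occurs infinitely often in $\y$; likewise some $b\in\psi^{-1}(c)\subset B$ occurs infinitely often in $\z$. Applying Lemma \ref{etoile} to $(\tau,b)$ and computing the length of the resulting prefix $\tau^{l'm}(u')\tau^{l'(m-1)}(v')\cdots v'w'b$ of $\z$, the terminal $b$ sits at a position of the form $t_m=\lambda' q^{l'm}+\mu'$ with $\lambda'>0$ (genuine growth, since $b$ occurs infinitely often). As $\psi$ is letter to letter with $\psi(b)=c$, this yields occurrences of $c$ in $\x$ at the positions $t_m\sim\lambda' q^{l'm}$.

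Next I exploit the $\sigma$-structure to make the long gaps self-similar. From $\y=\sigma(\y)$ one has $\x_{[jp^k,(j+1)p^k)}=\phi(\sigma^k(\y_j))$, so I set $B_k=\{e\in A:\ \phi(\sigma^k(e))\text{ contains no }c\}$. A direct check gives $e\in B_k$ iff every letter of $\sigma(e)$ lies in $B_{k-1}$, i.e. $B_k=\Psi^k(B_0)$ for the map $\Psi(S)=\{e:\sigma(e)\subseteq S\}$ on the finite set $2^A$; hence $(B_k)_k$ is eventually periodic, of some period $T$. A $c$-free interval of length at least $2p^k$ contains a full block $[jp^k,(j+1)p^k)$, whose generating letter $\y_j$ then lies in $B_k$. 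Letting the gaps tend to infinity and pigeonholing over $A$, one letter $e$ lies in $B_k$ for infinitely many $k$, hence, by eventual periodicity, for all large $k$ in a fixed residue class $k\equiv r\ (\mathrm{mod}\ T)$. Fixing an occurrence $j_0$ of $e$ in $\y$, the intervals $[j_0 p^k,(j_0+1)p^k)$ are then $c$-free for all large $k\equiv r\ (\mathrm{mod}\ T)$.

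Finally I collide the two. It suffices to find $m$ and $k\equiv r\ (\mathrm{mod}\ T)$ with $t_m\in[j_0 p^k,(j_0+1)p^k)$: that interval is $c$-free, yet $t_m$ carries a $c$. Writing $k=r+Tj$ and absorbing the bounded additive terms, this reduces to placing $q^{l'm}/(p^{T})^{j}$ in a fixed interval of positive length. Since $p^{T}$ and $q^{l'}$ are multiplicatively independent, Theorem \ref{densite} makes $\{q^{l'm}/(p^{T})^{j}:m,j\in\NN\}$ dense in $\RR^+$, so such $m,j$ exist, giving the contradiction. The main obstacle is precisely this last alignment: one must force the $\tau$-produced occurrence to land inside a $\sigma$-produced $c$-free block, which is exactly where the eventual periodicity of $(B_k)_k$, turning the barren scales into a geometric progression of ratio $p^{T}$, and the density of Theorem \ref{densite} become indispensable; the remaining bookkeeping (ruling out the degenerate case $\lambda'=0$ by an appropriate choice of the auxiliary letter in Lemma \ref{etoile}, and handling the $O(1)$ offsets) is routine.
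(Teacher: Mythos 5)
Your proof is correct and follows essentially the same strategy as the paper: self-similar $c$-free blocks at a geometric progression of scales $p^{r+Tj}$ on the $\sigma$ side, occurrences of $c$ at positions $\lambda' q^{l'm}+\mu'$ via Lemma \ref{etoile} on the $\tau$ side, and a collision forced by Theorem \ref{densite}. The only (cosmetic) difference is how the arithmetic progression of exponents is extracted -- you use eventual periodicity of the orbit of $B_0$ under $S\mapsto\{e:\sigma(e)\subseteq S\}$ on $2^A$, while the paper pigeonholes on the letter content of the words $\sigma^{p_n}(a)$ to find two indices with equal content; and the degenerate case $\lambda'=0$ you flag is equally present (and equally routine) in the paper's argument, where one needs $\gamma>0$.
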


\begin{proof}
Let $c\in C$ which has infinitely many occurrences in $\x$. Let $X = \{ n \in
 \NN ; \x_n = c \}$ and $A^{'} = \{ a\in A ; \phi (a) = c \}$. Assume
 that the letter $c$ does not appear with bounded gaps. Then there exist
 $a\in A$ with infinitely many occurrences in $\y$ and a strictly
 increasing sequence $(p_n ; n\in \NN)$ of positive integers such that
 the letter $c$ does not appear in $\phi (\sigma^{p_n} (a) )$.

Let $u\in A^*$ such that $ua$ is a prefix of $\y$. Of course we can suppose that $u$ is non-empty. 

For all $n\in \NN$ we denote $\Omega_n\subset A$ the set of letters appearing in $\sigma^{p_n} (a)$. 
There exist two distinct integers $n_1 < n_2$ such that $\Omega_{n_1} = \Omega_{n_2}$. 
Let $\Omega$ be the set of letters appearing in $\sigma^{p_{n_2} - p_{n_1}} ( \Omega_{n_1} )$. 
It is easy to show that $\Omega = \Omega_{n_1} = \Omega_{n_2}$.

Consequently the set of letters appearing in $\sigma^{p_{n_2} - p_{n_1}} (\Omega)$ is equal to $\Omega$ 
and for all $k\in \NN$ the set of letters appearing in $\sigma^{p_{n_1} + k ( p_{n_2} - p_{n_1} ) } (a)$ is equal to $\Omega$. 
We set $f = p_{n_1}$ and $g = p_{n_2} - p_{n_1}$. 
We remark that the letter $c$ does not appear in the word $\phi (\sigma^{f+kg} (a))$ 
and that 

\begin{equation}
\label{gap}
[|\sigma^{f+kg} (u)|, |\sigma^{f+kg} (ua)|[ \ \cap X = \emptyset ,
\end{equation}

for all $k\in \NN$.
Let $\epsilon $ be such that
$
|u| (1+\epsilon) < | ua | (1-\epsilon)$.
Thus, $\sigma $ being of constant length $p$ we have
\begin{equation}
\label{encadre}
|u|
=\frac{|\sigma^{kg} (u)|}{ p^{kg}} 
< |u| (1+\epsilon) 
< |ua| (1-\epsilon)
< \frac{|\sigma^{kg} (ua)|}{ p^{kg}} 
.
\end{equation}
From Lemma \ref{etoile} applied to $\tau$ we have that there exist $s,t,t^{'} \in B^{*}$ and $h\in \NN$ such that for all $n\in \NN$

\begin{equation}
\label{referee}
\psi \left( \y_{[|\tau^{hn} (s) \tau^{h(n-1)} (t) \cdots  \tau^{h} (t) t t^{'}|]}\right) = c.
\end{equation}

Put $\gamma = |s| + |t|/(q^h-1)$. 
We have
$$
\lim_{m\rightarrow +\infty} | \tau^{hm} (s) \tau^{h(m-1)} (t) \cdots  \tau^{h} (t) t t^{'}| /q^{hm} = \gamma .$$
From Theorem \ref{densite} it follows that there exist two strictly increasing sequences of integers, $(m_i ; i\in \NN)$ and $(n_i ; i\in \NN )$, and $l\in \RR$ such that 
the sequence
$
\left( \gamma q^{m_ih} / p^{n_ig+f}\right)
$
tends to  $l 
\in \
]|u| (1+\epsilon) , |ua| (1-\epsilon)[.
$
Hence,

\begin{equation}
\label{limite}
\frac{|\tau^{hm_i} (s) \tau^{h(m_i-1)} (t) \cdots  \tau^{h} (t) t t^{'}
|}{p^{n_i g+f}}
\longrightarrow_{i\rightarrow +\infty} l.
\end{equation}
From (\ref{encadre}) and (\ref{limite}) there exists $i\in \NN$ such that
$$
|\sigma^{n_ig+f} (u)|
<
|\tau^{hm_i} (s) \tau^{h(m_i-1)} (t) \cdots  \tau^{h} (t) t t^{'} |
<
|\sigma^{n_ig+f} (ua)|,
$$
but $ |\tau^{hm_i} (s) \tau^{h(m_i-1)} (t) \cdots  \tau^{h}
(t) t t^{'} | $ belongs to $X$ by \eqref{referee}. This gives a contradiction with (\ref{gap}).
\end{proof}

We obtain the same conclusion for the words :

\begin{prop}
\label{wordgap}
The words having infinitely many occurrences in $\x$ appear in $\x$ with bounded gaps.
\end{prop}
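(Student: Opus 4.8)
The goal is to upgrade Proposition~\ref{bgaps}, which gives bounded gaps for individual \emph{letters} of $C$, to the analogous statement for arbitrary \emph{words}. The natural strategy is to reduce the word case to the letter case by passing to a higher block presentation, so that words of a fixed length become letters of a new alphabet, while preserving the substitutive structure in both bases $p$ and $q$.

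First I would fix a word $u\in L(\x)$ of length $k$ having infinitely many occurrences in $\x$, and consider the coding of $\x$ by its sliding blocks of length $k$: the sequence $\x^{(k)}$ over the alphabet $C_k=L_k(\x)$ defined by $\x^{(k)}_i=\x_{[i,i+k-1]}$. An occurrence of $u$ in $\x$ corresponds exactly to an occurrence of the letter $(u)\in C_k$ in $\x^{(k)}$, so the gaps of $u$ in $\x$ coincide with the gaps of the single letter $(u)$ in $\x^{(k)}$. Thus it suffices to show that $(u)$ appears with bounded gaps in $\x^{(k)}$, and for that I want to exhibit $\x^{(k)}$ as a common image of two constant-length substitutive sequences in the bases $p$ and $q$, so that Proposition~\ref{bgaps} applies verbatim.

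The key technical step is to realize $\x^{(k)}$ substitutively. Using the construction $\sigma_k$ of Subsection~\ref{section-frequences} (the substitution on words of length $k$), the fixed point $\y$ of $\sigma$ induces in a standard way a fixed point $\y^{(k)}$ of $\sigma_k$, and since $\sigma$ has constant length $p$, one checks that $\sigma_k$ again has constant length $p$ (each $\sigma_k((v))$ lists the first $|\sigma(v_1)|=p$ factors of length $k$ of $\sigma(v)$). Similarly $\tau$ induces $\tau_k$ of constant length $q$ on the $k$-block alphabet $B_k$ of $\z$. The block map sending a length-$k$ word over $A$ to the length-$k$ word over $C$ obtained by applying $\phi$ letterwise descends to a letter-to-letter morphism $\phi_k: A_k\to C_k$ with $\phi_k(\y^{(k)})=\x^{(k)}$, and likewise $\psi_k(\z^{(k)})=\x^{(k)}$. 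Hence $\x^{(k)}$ is simultaneously a letter-to-letter image of a fixed point of a constant-length-$p$ substitution and of a constant-length-$q$ substitution, with the same $p,q$ (still multiplicatively independent Perron numbers).

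The main obstacle I expect is bookkeeping rather than conceptual: one must verify carefully that the induced morphisms $\sigma_k,\tau_k$ are genuine constant-length substitutions on the block alphabets (in particular that the alignment of $k$-blocks under $\sigma$ is governed by $|\sigma(v_1)|=p$, independent of the block), and that $\phi_k,\psi_k$ intertwine the shift so that $\phi_k(\y^{(k)})=\psi_k(\z^{(k)})=\x^{(k)}$ holds exactly. Once this setup is in place, Proposition~\ref{bgaps} applied to $\x^{(k)}$ (with the hypotheses on $p,q$ inherited unchanged) yields that every letter of $C_k$ with infinitely many occurrences in $\x^{(k)}$ has bounded gaps; translating back, every word $u$ with infinitely many occurrences in $\x$ has bounded gaps, which is the claim.
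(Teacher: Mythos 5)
Your proposal is correct and follows essentially the same route as the paper: both pass to the $k$-block substitution $\sigma_k$ (resp.\ $\tau_k$), observe it is again of constant length $p$ (resp.\ $q$), and reduce the word $u$ to a single letter of the block alphabet so that Proposition~\ref{bgaps} applies. The only cosmetic difference is that the paper further collapses the block alphabet to $\{0,1\}$ via the indicator of $u$, whereas you keep the full block sequence $\x^{(k)}$ and track the letter $(u)$ directly; the two are equivalent.
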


\begin{proof}
Let $u$ be a word having infinitely many occurrences in $\x$. We set $|u| = n$.
 To prove that $u$ appears with bounded gaps in $\x$ it suffices to prove that the letter 1 appears with bounded gaps in the sequence $\t \in \{ 0,1 \}^{\NN}$ defined by
$$
\t_i = 1 \ \ {\rm if} \ \ \x_{[i,i+n-1]} = u
$$
and $0$ otherwise. In the sequel we prove that $\t$ is $p$ and $q$-substitutive.

The sequence $\y^{(n)} = (( \y_i \dots \y_{i+n-1}) ; i\in \NN )$ is a fixed 
 point of the substitution $\sigma_n : A_n \rightarrow A_n^*$, where $A_n$ is the alphabet $A^n $, defined for all $(a_1\dots a_n)$ in $A_n$ by
$$
\sigma_n ((a_1\dots a_n)) = (b_1\dots b_n)(b_2\dots b_{n+1})\dots (b_{p}\dots b_{p+n-1})
$$
where $\sigma(a_1) = b_1\dots b_p$ (for more details see
Section V.4 in \cite{Qu} for example).

Let $\rho : A_n \rightarrow A^{*}$ be the letter to letter morphism defined by  $\rho ( ( b_1\cdots b_n )) = b_1$ for all $(b_1\cdots b_n)\in A_n$. We have $\rho \circ \sigma_n = \sigma \circ \rho$, and then $M_{\rho}M_{\sigma_n} = M_{\sigma} M_{\rho}$. 
Consequently the dominant eigenvalue of $\sigma_n$ is $p$ and $\y^{(n)}$ is $p$-substitutive.

Let $f : A_n \rightarrow \{ 0,1 \}$ be the letter to letter morphism
 defined by

\medskip

\centerline
{
$f ( ( b_1\cdots b_n )) = 1$ if $b_1\cdots b_n = u$ and $0$ otherwise.
}

\medskip

It is easy to see that $f( \y^{(n)} ) = \t$ hence $\t$ is $p$-substitutive.

In the same way we show that $\t$ is $q$-substitutive and Proposition \ref{bgaps} concludes the proof.
\end{proof}

\subsection{Conclusion for the necessary part of the substitutive version of  Cobham's first theorem}

We keep the notations of the previous subsection.

\begin{lemma}
\label{sous-sub}
There exist $k\in \NN$ and $\tilde{A} \subset A$ such that $\sigma_{\mid \tilde{A}}^k $ is a primitive substitution. 
\end{lemma}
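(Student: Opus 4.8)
The plan is to extract $\tilde{A}$ and $k$ from the letter-transition graph of $\sigma$. Let $G$ be the directed graph on the vertex set $A$ having an edge $a\to a'$ whenever $a'$ occurs in $\sigma(a)$; then $a'$ occurs in $\sigma^n(a)$ exactly when there is a path of length $n$ from $a$ to $a'$ in $G$. The condensation of $G$ into strongly connected components is a finite acyclic graph, hence has a sink, i.e. there is a strongly connected component $C$ such that no edge of $G$ leaves $C$. This sink property is exactly what makes a restriction meaningful: if $a\in C$ then every letter of $\sigma(a)$ lies again in $C$, so $\sigma^k(C)\subseteq C^*$ for every $k$ and $\sigma^k_{\mid C}$ is a genuine morphism $C\to C^*$. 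Strong connectivity of $C$ says that its incidence matrix $M_C$ is irreducible.

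Two things then remain: producing a letter satisfying Condition \eqref{cond1}, and upgrading irreducibility to primitivity. For the first, observe that the first-letter map, sending $a$ to the first letter of $\sigma(a)$, maps $C$ into $C$ (again by the sink property, since that letter occurs in $\sigma(a)$). Being a self-map of a finite set it has a periodic point, so there are $a^*\in C$ and $t\geq 1$ such that $a^*$ is the first letter of $\sigma^t(a^*)$; this is Condition \eqref{cond1} for $\sigma^t$. Condition \eqref{cond2} is automatic because $\sigma$ has constant length $p\geq 2$, whence $|\sigma^{tn}(b)|=p^{tn}\to\infty$.

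For primitivity I would invoke the classical structure theorem for irreducible non-negative matrices. Let $h$ be the period of $M_C$ and write $C=C_0\sqcup\cdots\sqcup C_{h-1}$ for the associated cyclic classes, so that the letters of $\sigma(a)$ lie in $C_{(i+1)\bmod h}$ whenever $a\in C_i$, and $(M_C)^h$ restricted to each $C_i$ is primitive. Iterating the first-letter map $t$ times back to $a^*$ produces a closed walk of length $t$ in $C$, so $h$ divides $t$. Consequently $\sigma^t(C_i)\subseteq C_i^*$ for each $i$, and $(M_C)^t$ restricted to $C_{i^*}$ (the class containing $a^*$) is a positive power of the primitive matrix $(M_C)^h_{\mid C_{i^*}}$, hence is itself primitive. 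Taking $k=t$ and $\tilde{A}=C_{i^*}$ then yields a substitution $\sigma^k_{\mid \tilde{A}}\colon \tilde{A}\to \tilde{A}^*$ that is primitive and satisfies Condition \eqref{cond1} through $a^*$.

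The point I expect to be the only real obstacle is precisely this last step. A sink strongly connected component already gives a well-defined restricted substitution with an irreducible matrix, but irreducibility is strictly weaker than primitivity: an irreducible matrix of period $h>1$ is never primitive, and there need be no self-loop in $G$ itself to exploit (for instance $a\mapsto bb$, $b\mapsto aa$). Passing to $\sigma^t$ and discarding every cyclic class but the one containing $a^*$ is what removes the periodicity, and the divisibility $h\mid t$ forced by the first-letter cycle is exactly what guarantees that this class is stable under $\sigma^t$, so that the restriction is well defined. Everything else, namely closure under the morphism and Condition \eqref{cond2}, is immediate from the sink property and from the length being at least $2$.
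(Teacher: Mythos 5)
The paper offers no argument here --- the proof is explicitly ``left to the reader'' --- so there is nothing to compare against; what matters is whether your argument is correct, and it is. The decomposition into strongly connected components of the letter-transition graph, the choice of a sink component $C$ (which is forced to absorb all images since every letter has non-empty image), and the observation that the first-letter map restricted to $C$ has a periodic point $a^*$ of some period $t$ together give a well-defined restricted substitution satisfying the two conditions in the definition. You are also right that the only delicate point is the passage from irreducibility to primitivity, and your treatment of it is sound: the closed walk of length $t$ produced by the first-letter cycle forces the period $h$ of $M_C$ to divide $t$, which is exactly what makes the cyclic class $C_{i^*}$ of $a^*$ invariant under $\sigma^t$ and makes $(M_C)^t\vert_{C_{i^*}}$ a positive power of the primitive block $(M_C)^h\vert_{C_{i^*}}$, hence primitive. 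One remark worth adding, not because it affects the correctness of your proof of the stated lemma but because of how the lemma is used two lines later: the subsequent argument needs every word of $\phi\bigl(L(\sigma^k_{\mid\tilde A})\bigr)$ to occur infinitely often in $\x$, which requires $\tilde A$ to contain at least one letter with infinitely many occurrences in $\y$. An arbitrary sink component need not meet the letters of $\y$ at all; your construction accommodates this at no cost by starting the descent to a sink from the component of a letter occurring infinitely often in $\y$ (for instance the first letter of $\y$, which is already a fixed point of the first-letter map), but the choice has to be made.
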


\begin{proof}
The proof is left to the reader.
\end{proof}

Let $k$, $\tilde{A}$ and $l$, $\tilde{B}$ be given by Lemma \ref{sous-sub} for respectively $\sigma$ and $\tau$.
We remark that $\sigma^k_{\mid \tilde{A}}$ is a substitution of constant length $p^k$ and $\tau^{l}_{\mid \tilde{B}}$ a substitution of constant length $q^l$.
Let $(X_p , T) $ and $(X_q , T)$ be the substitutive dynamical systems they generate.
Each element of $\phi (L (\sigma^k_{\mid \tilde{A}} ))$ and  $\psi (L (\tau^l_{\mid \tilde{B}})) $ appears infinitely many times and consequently with bounded gaps
in $\x$.
Hence $\phi (L (\sigma^k_{\mid \tilde{A}})) = \psi ( L (\tau^l_{\mid \tilde{B}})) $, we call it $L$.
Let $(X,T)$ be the dynamical system $L$ generates.
We remark that $\phi : X_p \to X$ and $\psi : X_q\to X$ defined factor maps.
Theorem \ref{cobham-prim} implies $(X,T)$ is periodic.
Hence, there exists a word $u$ such that $L = L(u^{\omega})\subset L (\x)$ where $|u|$ is the least period. 

There exists an integer $N$ such that all the words of length $|u|$ appear infinitely many times in $\x_N \x_{N+1} \cdots $. 
We set $\t = \x_N \x_{N+1} \cdots $ and we will prove that $\t$ is ultimately periodic and consequently $\x$ will be ultimately periodic.

The word $u$ appears infinitely many times, consequently it appears with bounded gaps. 
Let $\R_u$ be the set of return words to $u$. 
It is finite. 
There exists an integer $N'$ such that all the words $w\in \R_u \cap L(\x_{N'} \x_{N'+1} \cdots )$ appear infinitely many times in $\x$. 
Hence these words appear with bounded gaps in $\x$. 
We set $\t' = \x_{N'} \x_{N'+1} \cdots $ and we will prove that $\t'$ is periodic and consequently $\x$ will be ultimately periodic. 
We can suppose that $u$ is a prefix of $\t'$. 
Then $\t'$ is a concatenation of return words to $u$. 
Let $w$ be a return word to $u$. 
It appears with bounded gaps hence it appears in some $u^n$ and there exist a suffix $r$ and a prefix $s$ of $u$, and an integer $i$ such that $wu = ru^i s$. 
As $|u|$ is the least period of $L$ it follows that $wu = u^i$. It follows that $\t' = uuu\cdots $.

\subsection{Proof of the sufficient part}

\begin{prop}
\label{recip}
Let $\x$ be a sequence on a finite alphabet $A$ and $p\geq 2$ an integer. 
If $\x$ is ultimately periodic 
then it is $p$-substitutive.
\end{prop}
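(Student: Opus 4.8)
The plan is to build explicitly a substitution of constant length $p$ whose fixed point projects by a letter to letter morphism onto $\x$, exploiting the self-similar relation $\z_{pn+j}=(\sigma(\z_n))_j$ (for $0\le j<p$) that characterises fixed points of substitutions of constant length $p$. First I would record the shape of an ultimately periodic sequence: there are integers $n_0\ge 0$ and $T\ge 1$ with $\x_n=\x_{n+T}$ for every $n\ge n_0$; in particular, for $n\ge n_0$ the value $\x_n$ depends only on the residue $n\bmod T$, since any two indices $\ge n_0$ congruent modulo $T$ are linked by repeated applications of the relation.

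The key idea for dealing with the preperiod is to enlarge it to a power of $p$. I would choose $s$ with $M:=p^{s}\ge n_0$ and split the state alphabet as $B=\{0,1,\dots,M-1\}\sqcup(\ZZ/T\ZZ)$, the first block being the ``exact position'' states meant to reproduce the positions in $[0,M)$ and the second being the ``residue'' states for positions $\ge M$. Then I would define a length-$p$ substitution $\sigma:B\to B^{*}$ by
$$
(\sigma(m))_j=\begin{cases} pm+j & \text{if } pm+j<M,\\ (pm+j)\bmod T & \text{if } pm+j\ge M, \end{cases}
\quad\text{and}\quad
(\sigma(r))_j=(pr+j)\bmod T,
$$
for $m\in\{0,\dots,M-1\}$, $r\in\ZZ/T\ZZ$ and $0\le j<p$. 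The residue block is closed under $\sigma$, matching the elementary construction showing that a purely periodic sequence is $p$-substitutive.

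Next I would check that $\sigma$ is a genuine substitution of constant length $p$: each image has length $p$, the lengths $|\sigma^{n}(b)|=p^{n}$ tend to infinity, and the first letter of $\sigma(0)$ is $0$, so $0$ is a fixed letter. Let $\z$ be the fixed point starting with $0$. I would verify directly from $\z_{pn+j}=(\sigma(\z_n))_j$ that $\z_n=n$ for $n<M$ and $\z_n=n\bmod T$ for $n\ge M$: for $m<M$ the digits $j\in\{0,\dots,p-1\}$ sweep the interval $[pm,pm+p)\subseteq[0,p^{s+1})$ and the case split sends each entry to the correct position or residue, while on residue states $\sigma$ reproduces $n\mapsto n\bmod T$ because $pn+j\ge pM>M$. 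Finally I would define the letter to letter morphism $\phi:B\to A$ by $\phi(m)=\x_m$ on position states and $\phi(r)=\x_{n'}$ on a residue state, where $n'\ge M$ is any index with $n'\equiv r\pmod T$ (well defined by the periodicity above). Then $\phi(\z)_n=\x_n$ for all $n$, whence $\x=\phi(\z)$ is $p$-substitutive.

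The main obstacle is precisely the preperiod: the naive ``residue modulo $T$'' construction only produces purely periodic sequences, and one must reconcile the finitely many arbitrary initial symbols with the rigid base-$p$ self-similarity. Enlarging the preperiod to $M=p^{s}$ is exactly what makes the boundary compatible, and the only delicate point is to confirm that the case split in $\sigma(m)$ is consistent at indices $pm+j$ crossing the threshold $M$; there the periodicity hypothesis applies because such indices satisfy $pm+j\ge M\ge n_0$, so no inconsistency can arise.
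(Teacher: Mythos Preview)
Your proof is correct and follows essentially the same strategy as the paper: split the alphabet into ``preperiod position'' states and ``period residue'' states, define the length-$p$ substitution via $n \mapsto (pn,\,pn+1,\,\ldots,\,pn+p-1)$ with the appropriate reduction once past the preperiod, and project onto $\x$ by a letter-to-letter map. The only difference is cosmetic normalization: the paper first rewrites $\x = uvvv\cdots$ with $|u|=|v|=pl\in p\ZZ$ and uses the alphabet $\{a_0,\ldots,a_{2pl-1}\}$, whereas you enlarge the preperiod to a power $M=p^{s}$ and keep the period $T$ unchanged, using $\{0,\ldots,M-1\}\sqcup(\ZZ/T\ZZ)$; if anything, your explicit use of $\ZZ/T\ZZ$ makes the well-definedness of $\sigma$ on residue states more transparent.
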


\begin{proof}
Suppose $\x = u' v'v'v' \dots $.
It is easy to see that $\x = u vvv \dots $ where $|u|=|v| \in p\ZZ$ for some well chosen $u$ and $v$.
We set $|u| = pl$.
Take the alphabet $B = \{ a_0,a_1, a_2 , \dots ,  a_{2pl -1} \}$ and define the substitution 
$\sigma : B \to B^*$ by

$$
\sigma (a_k) 
= 
\left\{ 
\begin{array}{ll}
\hskip -1.4pt a_{kp} a_{kp +1} \cdots a_{(k+1)p-1} & \hskip -3pt  \hbox{if }  0\leq k \leq l-1 \\
\hskip -1.4pt a_{lp + jp} a_{lp + jp +1} \cdots a_{lp + jp + p-1} & \hskip -3pt \hbox{if }  k = k'l+j \hbox{ with } k'\geq 1 \hbox{ and } 0\leq j<l . 
\end{array}
\right.
$$

We call $\y$ the fixed point of $\sigma$ starting with $a_0$.
Now we define a letter to letter morphism $\phi : B \to A$ by 

$$
\phi (a_k) 
= 
\left\{ 
\begin{array}{ll}
u_k & \hbox{if } 0\leq k < lp \\
v_k & \hbox{if } lp \leq k < 2lp . 
\end{array}
\right.
$$

Then it can be checked that $\x = \phi (\y)$ which achieves the proof.
\end{proof}

\section{Multidimensional substitutions and Cobham-Semenov theorem}
\label{section-multi}

In this section we prove the necessary part of the
 substitutive version of the Cobham-Semenov theorem in the primitive case.

\subsection{Arrays and patterns}

Let $A$ be a finite alphabet.
We call {\bf array} in $\NN^d$ any function $\T : \NN^d \to A$.
The collection of all these arrays is $A^{\NN^d}$ that we endow with the product topology of the discrete topologies.

A non-empty set $C = \{\overrightarrow{0} , \overrightarrow{c}_1 , \dots , \overrightarrow{c}_n \} \subset \NNd$ of pairwise distinct elements is called a {\bf configuration}.
A $C$-{\bf pattern} $P$ is a map from $C$ to $A$.
A $C$-pattern $P$ of $\T$ is defined as the function $\T_{C , \overrightarrow{v}} : C \to A$ with $\T_{C , \overrightarrow{v}}  (\overrightarrow{x} ) = \T  (\overrightarrow{x} + \overrightarrow{v} ) $ for all $\overrightarrow{x}\in C$, for some $\overrightarrow{v} \in \NNd$.
When $\overrightarrow{v} = \overrightarrow{0} $ we set $\T_C = \T_{C,\overrightarrow{0}  }$.
The set $C$ is the {\bf support} of $P$ and is denoted by $\supp (P)$.
We set $|\T_{C , \overrightarrow{v}}| = \# C$.
An {\bf occurrence} of the $C$-pattern $P$ in $\T$ is a vector $\overrightarrow{v} \in \NNd$ such that $P = \T_{C , \overrightarrow{v}}$.
In this situation we will say that $P$ has an occurrence in $\T$ or $P$ appears in $\T$.
The $C$-{\bf language} of $\T$ is defined to be the set $L_C (\T)$ of $C$-patterns having an occurrence in $\T$: $L_C (\T) = \{ \T_{C, \overrightarrow{v}} ; \overrightarrow{v}\in \NNd \}$.
When $P'$ is a pattern we define in the same way the notion of occurrence of $P$ in $P'$ and the related notions.

We will write the elements of $\NN^d$ as $\overrightarrow{j} = (j_1 , j_2 ,\dots , j_d)$;
we set $|\overrightarrow{j}| = \max_{1\leq i \leq d} |j_i|$.
The {\bf cube} located at $\overrightarrow{j} \in \NNd$ of side $R\in \RR^+$ is the set
$B(\overrightarrow{j} ,R) = \{ \overrightarrow{k} \in \NNd; 0\leq k_i - j_i < R , i=1,\dots , d \}$.
A {\bf cubic-pattern} $P$ is a $C$-pattern where $C= B( \overrightarrow{j} ,R)$ for some $R\geq0$ and $\overrightarrow{j}$.
We also say $P$ is a $R$-cubic-pattern.
The $1$-cubic-patterns  can be viewed as letters of $A$;
when it is clear from the context we will identify them.

Let $X$ be a subset of $\A^{\NN^d}$ and $P$ be a pattern.
We denote by $[P]_X$ the subset of $X$ consisting of the elements $\T \in X$ 
such that $\T_{\supp P} = P$. 
When $P$ is a cubic-pattern with $\supp P = B(\overrightarrow{0}, R)$, we say $[P]_X $ is a cubic-cylinder.

A subset $Z$ of $\NNd$ is {\bf syndetic} if there exist $R$ such that for each cube $C$ of side $R$ the
set $C\cap Z$ is non-empty. 
We say $\T$ is {\bf syndetic} if $\T_a = \T^ {-1} (\{ a \} )$ is syndetic for all $a\in A$.
We say $\T$ is {\bf uniformly recurrent} if for every pattern $P$ there is $R>0$ 
such that $P$ appears in every $R$-cubic-pattern of $\T$.
It clearly fits the definitions of the one-dimensional case.

\subsection{Periodicity and definability}

We say $\T$ is {\bf periodic} (resp. {\bf ultimately periodic}) if there exists $\overrightarrow{k}\in \NNd$ such that $\T (\overrightarrow{k} + \overrightarrow{j}) = \T (\overrightarrow{j})$ for all $\overrightarrow{j} \in \NNd$ (resp. for all large enough $\overrightarrow{j}$).
We will also need another notion of periodicity.
We say $Z\subset \NNd$ is $\overrightarrow{j}${\bf -periodic inside } $X\subset \NNd$ if for any $\overrightarrow{u}\in X$
with  $\overrightarrow{u} +\overrightarrow{j} \in X$ we have
$$
\overrightarrow{u}\in Z \hbox{ if and only if } \overrightarrow{u} + \overrightarrow{j} \in Z
.$$

We will say that $Z$ is {\bf locally periodic} if there exists a non-empty finite set $V$ of non-zero vectors $\overrightarrow{v}\in \NNd$
such that for some $K> \max \{ |v| ; v\in V \}$ and $L\geq 0$ one has:
$$
(\forall \overrightarrow{j} \in \NN^d , |\overrightarrow{j} | \geq L)
(\exists \overrightarrow{v} \in V)
(Z \hbox{ is } \overrightarrow{v}-\hbox{ periodic inside } B(\overrightarrow{j} , K))
$$

Remark that for $d=1$, local periodicity is equivalent to the ultimate periodicity.
We say $\T$ is {\bf pseudo-periodic} if for all $a\in A$, $\T_a$ is locally periodic and every $(d-1)$-sections of $\T_a$,
$S(i,n) = \{ \overrightarrow{j}\in \T_a ; j_i = n \}$, $1\leq i \leq d$ and $n\in \NN$, is pseudo-periodic (ultimately periodic when $d-1=1$). 
We say $E\subset \NN^d$ is {\bf definable} whenever its characteristic function is pseudo-periodic.
The following criterion is due to Muchnik (see \cite{BHMV} for the proof).

\begin{prop}
The following are equivalent:
\begin{enumerate}
\item
$\T$ is pseudo-periodic;
\item
For all $a\in A$,  there exists $n\in \NN$ and finite sets $V_i \subset \NNd$,
$0\leq i\leq n$ such that

$$
\T_a = V_0 \cup \left( \bigcup_{1\leq i\leq n} \sum_{v\in V_i} v \NN \right)  .
$$
\end{enumerate}
\end{prop}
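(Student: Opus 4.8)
The plan is to argue by induction on the dimension $d$, since both statements are phrased recursively through $(d-1)$-sections: pseudo-periodicity of $\T$ asks for local periodicity of each $\T_a$ together with pseudo-periodicity of every section $S(i,n)$, while the semilinear form will be reduced to lower dimension through its slices. The base case $d=1$ is classical and needs no induction: by the remark preceding the proposition, local periodicity coincides with ultimate periodicity for subsets of $\NN$, and a subset of $\NN$ is ultimately periodic if and only if it is a finite union of arithmetic progressions, which is exactly the one-dimensional instance of the semilinear form (the finite set $V_0$ carrying the exceptional part and each $\sum_{v\in V_i}v\NN$ an eventual period). I would record this as the starting point.

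For the implication $(2)\Rightarrow(1)$ in the inductive step, suppose $\T_a=V_0\cup\bigcup_{1\le i\le n}\sum_{v\in V_i}v\NN$. To produce local periodicity I would take $V$ to be the finite set of all generators appearing in the $V_i$, fix $K>\max\{|v|;v\in V\}$, and choose $L$ so large that $V_0$ and the ``boundary'' of the finitely many finitely generated submonoids lie in the region $|\overrightarrow{j}|<L$; then for every cube $B(\overrightarrow{j},K)$ with $|\overrightarrow{j}|\ge L$ one checks that $\T_a$ is $\overrightarrow{v}$-periodic inside it for a suitable generator $\overrightarrow{v}\in V$, using that far from its boundary a finitely generated submonoid is stable under both adding and removing a generator. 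For the sections, semilinear sets are closed under intersection with a coordinate hyperplane $\{j_i=n\}$, and such a slice, read as a subset of $\NN^{d-1}$, is again semilinear; the induction hypothesis then makes each $S(i,n)$ pseudo-periodic. Combining the two gives pseudo-periodicity of $\T$.

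The delicate direction is $(1)\Rightarrow(2)$. From local periodicity of $\T_a$ I would first promote the cube-by-cube periodicity into honest periodicity on finitely many regions: outside the bounded box $|\overrightarrow{j}|<L$ the witnessing directions $\overrightarrow{v}\in V$ can be organized so that $\NN^d$ is covered by finitely many pieces, on each of which $\T_a$ is genuinely $\overrightarrow{v}$-periodic for a single $\overrightarrow{v}$. On such a piece $\T_a$ is determined by a $(d-1)$-dimensional slice together with the periodicity $\overrightarrow{v}\NN$; the slice is pseudo-periodic by hypothesis, hence semilinear by the induction hypothesis, so the piece contributes a finite union of sets of the form $W+\overrightarrow{v}\NN$ with $W$ a semilinear slice, and these are again semilinear. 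Folding the bounded remainder into $V_0$ then yields the global semilinear description. I expect the main obstacle to be exactly this patching step: one must show that the finitely many admissible periodicity directions cover $\NN^d$ coherently and that the region boundaries and overlaps do not destroy semilinearity. This combinatorial bookkeeping is the technical heart of Muchnik's criterion and is carried out in full in \cite{BHMV}.
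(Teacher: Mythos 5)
First, a point of comparison: the paper does not prove this proposition at all --- it is stated as Muchnik's criterion and the reader is sent to \cite{BHMV} for the proof. So there is no in-paper argument to measure your proposal against; what follows is an assessment of your sketch on its own terms.

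Your outline correctly identifies the standard architecture (induction on $d$, with the two directions handled as you describe), but as a proof it has a genuine gap, and you name it yourself: the entire content of the hard implication $(1)\Rightarrow(2)$ is the ``patching step'' --- organizing the locally witnessed periods $\overrightarrow{v}\in V$, which may vary from cube to cube, into finitely many regions on each of which a single global period holds, controlling the boundaries and overlaps of those regions, and showing the whole decomposition is semilinear --- and you explicitly defer this to \cite{BHMV} rather than carry it out. Deferring the ``technical heart'' of the theorem to the reference is not a proof; it is the same move the paper makes. Two further points would need repair even in the parts you do sketch. In $(2)\Rightarrow(1)$, your plan to choose $L$ so that ``the boundary of the finitely many finitely generated submonoids lies in the region $|\overrightarrow{j}|<L$'' cannot work as stated: the set of points of $\sum_{v\in V_i}v\NN$ at bounded distance from its complement is in general unbounded (already for $V_i=\{(1,1)\}$ in $\NN^2$ the submonoid is a line and every point of it is ``boundary''), so the local period must be exhibited cube by cube, not by discarding a bounded exceptional box; the real difficulty is a cube met by two constituent submonoids demanding incompatible periods. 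In $(1)\Rightarrow(2)$, the induction hypothesis applies to the coordinate sections $S(i,n)=\{\overrightarrow{j}\in\T_a ; j_i=n\}$, whereas the ``slice'' determining $\T_a$ on a $\overrightarrow{v}$-periodic piece is a transversal to $\overrightarrow{v}$ intersected with that piece; relating the two (by projecting along $\overrightarrow{v}$ onto a suitable hyperplane $\{j_i=n\}$ and showing the relevant transversal is itself semilinear) is part of the bookkeeping you have skipped.
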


\subsection{$\NNd$-subshifts}

The translation of $\T \in \ANd$ by  $\overrightarrow{j} \in \NNd$ yields the array $\T+\overrightarrow{j}$ whose
$\overrightarrow{k}$-th element is
$$
(\T + \overrightarrow{j} ) (\overrightarrow{k}) = \T (\overrightarrow{k} + \overrightarrow{j}) .
$$

This is also called the $\NNd$-shift;
it shifts the array so that the element that used to be at $\overrightarrow{j}$ is now at $\overrightarrow{0}$.
A $\NNd$-{\bf subshift} is a pair $(X , \ANd )$ where $X$ is a closed subset of $\ANd$ which is invariant by all translations under $\overrightarrow{j}\in \NNd$.
We say $(X , \ANd )$ is minimal if all orbits are dense : for all $\T\in X$, 

$$
\overline{\{ \T + \overrightarrow{v} ; \overrightarrow{v} \in \NN^d \}} = X .
$$

In this case all elements of $X$ share the same patterns. 
We denote the set of all these patterns $L(X)$ (the language of $X$).

\subsection{Multidimensional substitutions}

Let $p$ be a positive integer and $A$ a finite alphabet.
A $p$-substitution (or substitution if we do not need to specify $p$) is a map $S: A\to A^{B_p}$
where $B_p = B (\overrightarrow{0} , p) = \Pi_{i=1}^d\{0,\cdots, p-1\}$.
The substitution $S$ can be considered as a function from
$A^{\NN^d}$ into itself by setting
$$
S(\T)(\overrightarrow{m})=(S (\T (\overrightarrow{j} ) )) (\overrightarrow{k}),
$$

where
$\overrightarrow{j}\in \NN^d$ and $\overrightarrow{k}\in B_p$ are the unique vectors satisfying

$$
\overrightarrow{m}=p \overrightarrow{j} + \overrightarrow{k}.
$$

In the same way, we can define $S: A^{B_{p^n}}\to
A^{B_{p^{n+1}}}$.
We remark that $S^n(a)=S(S^{n-1}(a))$ for
all $a \in A$ and $n> 0$.
To $S$ is associated its {\bf incidence matrix} $M(S)\in \M_{A\times A} (\ZZ^+ )$ defined by

$$
M(S)_{a,b} = \# \{ \overrightarrow{k} \in B_p ; S (b)_{\overrightarrow{k}} = a   \} .
$$

We say $S$ is {\bf primitive} whenever $M(S)$ is primitive (i.e. there exists $k$ such that $M(S)^k >0$).
This means that for every $a\in A$ there exists $b\in A$ such that $a$ appears in
$S^k(b)$.
When $S$ is primitive there always exist $\T_0\in
A^{\NN^d}$ and $l>0$ such that $S^l(\T_0)=\T_0$.
We say $\T$ is {\bf generated by a $p$-substitution} if there exist a letter to letter morphism $\phi$ 
 and a fixed point $\T_0$ of a $p$-substitution such that $\T = \phi \circ \T_0$.

Let $X$ be the subset of $\ANd$ consisting of the arrays $\T$ verifying that for each subblock $B$ of $\T$ there exist $j\in \NN$ and $a\in A$ such that $B$ is a subblock of $S^j (a)$.
It can be checked that $X$ is closed and translation invariant.
The action of $\NNd$ by translations on $X$ is a $\NN^d$-subshift called {\bf substitution subshift} or 
{\bf $\NN^d$-subshift generated by a $p$-substitution}.
The orbit closure of $\T_0$, $\overline{\{ \T_0 + \overrightarrow{v} ; \overrightarrow{v} \in \NNd \}} $, is not necessarily equal to $X$.
But it is the case when $S$ is primitive.

\subsection{Substitutive version of the Cobham-Semenov theorem}

In this subsection we ``translate'' the Cobham-Semenov theorem in terms of substitutions to obtain an equivalent statement
we will prove in the primitive case.
In \cite{CG} the authors proved the following theorem which is analogous to Cobham's second theorem.

\begin{theo}
\label{equiv-semenov}
Let $p\geq 2$ and $d\geq 1$.
Let $E\subset \NN^d$.
The following are equivalent.

\begin{enumerate}
\item
$E$ is $p$-recognizable,
\item
the characteristic function of $E$ is generated by a $p$-substitution.
\end{enumerate}
\end{theo}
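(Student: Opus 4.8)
The plan is to prove the equivalence in the style of Cobham's second theorem, establishing both directions by an explicit dictionary between finite automata reading base-$p$ expansions of $d$-tuples and $p$-substitutions on an alphabet of states. The key observation, exactly as in the one-dimensional case sketched earlier in the paper, is that the digit set $B_p = \Pi_{i=1}^d \{0,\dots,p-1\}$ indexing a $p$-substitution is precisely the alphabet on which base-$p$ expansions of elements of $\NN^d$ are written (a tuple of words of equal length, with leading zeroes added). So a single base-$p$ digit of $\overrightarrow{m}$ is an element $\overrightarrow{k}\in B_p$, and reading one more low-order digit corresponds exactly to the refinement $\overrightarrow{m} = p\overrightarrow{j}+\overrightarrow{k}$ that defines the action $S(\T)(\overrightarrow{m}) = (S(\T(\overrightarrow{j})))(\overrightarrow{k})$.

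For the implication $(1)\Rightarrow(2)$, I would start from a finite automaton $\A=(Q,B_p,E,I,T)$ recognizing the language $\rho_p(E)\subset B_p^*$ of base-$p$ expansions of elements of $E$; after the standard reductions (determinize, and arrange that reading a string of high-order zeroes does not change the accepted set) I may assume $\A$ is deterministic with a single initial state $q_0$ that is fixed by the zero digit. I then take the alphabet of the substitution to be $Q$ and define $S:Q\to Q^{B_p}$ by $S(q)_{\overrightarrow{k}} = \delta(q,\overrightarrow{k})$, where $\delta$ is the transition function; this is a genuine $p$-substitution, and by induction $S^n(q_0)(\overrightarrow{m})$ is the state reached after reading the length-$n$ base-$p$ expansion of $\overrightarrow{m}$. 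One checks that $q_0$ is a fixed letter of $S$ (because $\delta(q_0,\overrightarrow{0})=q_0$) and that the unique fixed point $\T_0$ starting at $q_0$ records, at position $\overrightarrow{m}$, the state reached by $\A$ on reading $\overrightarrow{m}$. Composing with the letter-to-letter morphism $\phi:Q\to\{0,1\}$ sending terminal states to $1$ and all others to $0$ yields $\phi\circ\T_0 = $ characteristic function of $E$, which is exactly what it means for the characteristic function to be generated by a $p$-substitution.

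For the converse $(2)\Rightarrow(1)$, I would run this correspondence backwards: given a $p$-substitution $S:A\to A^{B_p}$ with fixed point $\T_0$ and a letter-to-letter morphism $\phi$ with $\phi\circ\T_0$ the characteristic function of $E$, I build an automaton whose states are the letters of $A$, whose transition on digit $\overrightarrow{k}\in B_p$ sends $a$ to $S(a)_{\overrightarrow{k}}$, whose initial state is the fixed letter, and whose terminal states are $\phi^{-1}(1)$. Reading the base-$p$ expansion of $\overrightarrow{m}$ then lands in state $\T_0(\overrightarrow{m})$, which is accepted precisely when $\overrightarrow{m}\in E$; stability under leading zeroes holds because the fixed letter is the first symbol of its image. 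I expect the main obstacle to be bookkeeping rather than conceptual: reconciling the length-normalization convention for $d$-tuples (the padding with leading zeroes so that all $d$ coordinates have a common length) with the geometry of the cubes $B_{p^n}$, and verifying carefully that the ``leading zero'' digit $\overrightarrow{0}\in B_p$ acts as identity on the relevant state so that the accepted set is genuinely the full padded language $0^*\rho_p(E)$ and not just a single normalized expansion. Since this theorem is attributed to \cite{CG}, I would present the construction in both directions and relegate these padding verifications to routine checks.
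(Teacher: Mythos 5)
Your construction is correct and is exactly the standard automaton--substitution dictionary: it is the $d$-dimensional version of the correspondence the paper itself sketches in Section \ref{definitions} for the examples $E_1$, $E_2$, $E_3$ (states as letters, transitions as the images under the substitution, terminal states sent to $1$), and the points you flag as routine (determinization, arranging $\delta(q_0,\overrightarrow{0})=q_0$ so that leading-zero padding is harmless and a fixed point exists) are indeed the only bookkeeping required. Note that the paper does not prove this theorem at all --- it simply quotes it from \cite{CG} --- so there is no internal proof to compare against, but your argument is the expected one and I see no gap in it.
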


Hence we can reformulate the Cobham-Semenov as follows.

\bigskip

{\bf Cobham-Semenov theorem (Substitutive version).}
{\it
Let $p$ and $q$ be two multiplicatively independent integers greater or equal to $2$. 
Then, the array $\T$ is generated by both a $p$-substitution and a $q$-substitution 
if and only if $E$ is pseudo-periodic.}

\bigskip

We will prove the necessary part of this result in the primitive case 
using $\NN^d$-subshifts and the same succession of arguments as for $d=1$.

\medskip

In the sequel $S:A\to A^{D}$ is a primitive substitution, with $D= \{ 0, 1 \dots , l -1 \}^d$.
We set $\theta = l^d$.
It is not a restriction to suppose that $S$ has a fixed point $\T_0$.
Let $(X, \NNd )$ be its associated $\NNd$-subshift and $M\in \M_{A\times A}(\ZZ^+)$ be its incidence matrix.

\subsection{Linear recurrence}

A proof of the following result for $d=1$ can be found in \cite{DHS} and for $d\geq 1$ in  \cite{So2}. 
Both proofs use the same kind of arguments.

\begin{prop}
\label{linrec1}
Let $(X,\NN^d )$ be a minimal substitution subshift.
There exists a constant
$K>0$ such that for every cubic pattern $P \in L(X)$, any cubic pattern $P' \in L(X)$ 
satisfying $|P'|\geq K|P|$ contains a translated copy of $P$.
\end{prop}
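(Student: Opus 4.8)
The plan is to reduce everything to the fixed point $\T_0$ and to exploit the self-similar decomposition it carries at every scale. By minimality all elements of $X$ share the same patterns, so it suffices to find occurrences of $P$ and $P'$ inside $\T_0$ itself. Since $S(\T_0)=\T_0$ we have $\T_0 = S^k(\T_0)$ for every $k$, which means that $\T_0$ is partitioned into level-$k$ \emph{supertiles}: the cube $B(l^k\overrightarrow{j}, l^k)$ carries the pattern $S^k(\T_0(\overrightarrow{j}))$. Thus the ``super-array'' obtained by reading off the letter $\T_0(\overrightarrow{j})$ of each level-$k$ block is again $\T_0$, independently of $k$. The two ingredients I will combine are the supertile structure (a small pattern is captured by a bounded number of supertiles) and the uniform recurrence of $\T_0$ read at the super-array scale, which by the previous remark does not depend on $k$.

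First I would record two facts. (i) Let $P$ be a cubic pattern of side $s$ and let $k$ be the least integer with $l^k\geq s$, so that $l^k< ls$ by minimality of $k$. Any occurrence of $P$ in $\T_0$ occupies a cube of side $s\leq l^k$; such a cube meets at most two consecutive level-$k$ blocks in each coordinate, hence at most $2^d$ blocks in all. Completing to a full $2^d$-block if necessary, we conclude that $P$ occurs inside $S^k(Q)$ for some side-$2$ cubic pattern $Q=(\T_0)_{B(\overrightarrow{j_0},2)}\in L(X)$. (ii) Because $(X,\NNd)$ is minimal, $\T_0$ is uniformly recurrent; as there are only finitely many side-$2$ cubic patterns in $L(X)$, there is a constant $R$ such that every side-$2$ cubic pattern of $L(X)$ appears in every side-$R$ cubic sub-pattern of $\T_0$. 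Reading $\T_0$ as the super-array of the level-$k$ decomposition, this says that every side-$2$ cubic pattern of $L(X)$ occurs among any $R\times\cdots\times R$ block of consecutive level-$k$ supertiles, and crucially $R$ is the same for every $k$.

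Then I would conclude as follows. Take $P'$ cubic of side $s'$ and place an occurrence of $P'$ in $\T_0$. Overlaying the level-$k$ grid, the occurrence of $P'$ may be misaligned with $l^k\ZZ^d$ by at most $l^k-1$ in each direction, so as soon as $s'\geq (R+1)l^k$ the region of $P'$ contains a full, grid-aligned block of $R$ consecutive supertiles in every coordinate. By (ii) this block realizes the particular side-$2$ pattern $Q$ of (i), hence $S^k(Q)$ occurs inside $P'$, and therefore so does $P$. Finally $l^k<ls$ shows that $s'\geq (R+1)ls$ already forces $s'\geq(R+1)l^k$, so it suffices to take $K=((R+1)l)^d$ (recall $|P|=\#\supp P=s^d$). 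The constant $K$ depends only on $S$ through $l$ and $R$, as required. The main obstacle I anticipate is purely geometric bookkeeping: making precise that a side-$s$ cube meets at most $2^d$ level-$k$ blocks and hence occurs inside $S^k(Q)$, and correctly absorbing the $l^k-1$ misalignment when forcing $R$ aligned supertiles inside $P'$. The conceptual point that $R$ does not grow with $k$ rests entirely on the scale-invariance $\T_0=S^k(\T_0)$.
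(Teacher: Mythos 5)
Your proof is correct and complete: the supertile decomposition $\T_0=S^k(\T_0)$, the observation that the level-$k$ super-array is again $\T_0$ (so the uniform-recurrence constant $R$ for side-$2$ super-patterns is independent of $k$), and the misalignment bookkeeping leading to $K=((R+1)l)^d$ all check out under the paper's convention $|P|=\#\supp (P)=s^d$. Note that the paper gives no proof of this proposition --- it simply writes ``See \cite{So2}'' --- and your argument is essentially the standard scale-invariance argument underlying that reference (and \cite{DHS} for $d=1$), so you have supplied in full what the paper delegates.
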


\begin{proof}
See \cite{So2}.
\end{proof}

This property is called {\bf linear recurrence} for $d=1$ in \cite{DHS}, {\bf strong repetitivity} in \cite{So2} and {\bf linear repetitivity} in \cite{LP} for $d\geq 1$. 
We choose to say linearly recurrent with the constant $K$.
We say $\T_0$ is linearly recurrent.
As in the one-dimensional case, all tilings of $X$ have the same property.
Hence we also say that $(X,\NN^d)$ is linearly recurrent.

The following result is an extension of the assertion (3) of Proposition  \ref{linrec} to the multidimensional case.
But the proof is not an extension of the proof of  Proposition  \ref{linrec}.
In Proposition \ref{linrec} we use the fact that
if for some $n$ a sequence $\y\in A^\NN$ has at most $n$ different words of length $n$, 
then it is ultimately periodic  (see \cite{HM}).
For $d\geq 2$ such a result does not exist.

\begin{prop}
\label{linrec2}
Let $(Y, \NNd )$ be a non-periodic linearly recurrent $\NNd$-subshift.
The\-re exists $K'$ such that for any $\overrightarrow{x},\overrightarrow{y},\overrightarrow{z} \in \NN^d$ 
and any cubic pattern $P$, 
if $\overrightarrow{x}$ and $\overrightarrow{y}$ are two different occurrences of $P$ 
and $B(\overrightarrow{z} , r) \subset \supp (P)$ then 
$|\overrightarrow{x}-\overrightarrow{y}| \geq \frac{r}{K'}$.
\end{prop}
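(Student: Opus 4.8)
The plan is to prove the statement in its strongest form, taking $r$ to be the side of $P$: since $B(\overrightarrow{z},r)\subseteq \supp(P)$ forces $r$ to be at most the side of $P$, the inequality for a general sub-cube follows a fortiori from the one with $r$ maximal. So it suffices to show that two distinct occurrences of a cubic pattern of side $R$ are at sup-distance at least $R/K'$.

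First I would record the periodicity created by a close repetition. If $\overrightarrow{x}$ and $\overrightarrow{y}$ are occurrences of $P$ and $\overrightarrow{t}=\overrightarrow{y}-\overrightarrow{x}\neq \overrightarrow{0}$, then $\T(\overrightarrow{x}+\overrightarrow{w})=P(\overrightarrow{w})=\T(\overrightarrow{y}+\overrightarrow{w})$ for every $\overrightarrow{w}\in\supp(P)$, so $\T(\overrightarrow{m})=\T(\overrightarrow{m}+\overrightarrow{t})$ for all $\overrightarrow{m}$ in the cube $\overrightarrow{x}+\supp(P)$ of side $R$; chaining this relation along the discrete lines of direction $\overrightarrow{t}$, the array $\T$ is constant on every $\overrightarrow{t}$-segment contained in that cube. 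Thus the proposition is equivalent to a uniform bound: in a non-periodic linearly recurrent $\NNd$-subshift there is a constant $K'$ such that no $\T$ can be $\overrightarrow{t}$-periodic (in this sense) on a cube of side larger than $K'|\overrightarrow{t}|$.

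I would then argue by contradiction, supposing that for each $n$ there is a cube of side $R_n$ on which some array of $Y$ is $\overrightarrow{t}_n$-periodic with $R_n/|\overrightarrow{t}_n|\to\infty$. If the periods $\overrightarrow{t}_n$ stay bounded, pass to a subsequence with $\overrightarrow{t}_n=\overrightarrow{t}$ constant, translate each cube to the origin, and use compactness of $Y$: the limit $\T^{*}\in Y$ is $\overrightarrow{t}$-periodic on every cube, hence on all of $\NNd$. As $\overrightarrow{t}\neq\overrightarrow{0}$ this makes $\T^{*}$ periodic, which is impossible in a minimal non-periodic subshift (a single period vector of one element is shared, by density and closedness, by all of $Y$).

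The remaining case $|\overrightarrow{t}_n|\to\infty$ is where the difficulty lies, and it is exactly the point at which the one-dimensional Morse--Hedlund argument of Proposition \ref{linrec} has no higher-dimensional analogue: passing to a limit now loses all periodicity, since the period escapes to infinity. Here the plan is to use Proposition \ref{linrec1} to produce a second period transversal to $\overrightarrow{t}_n$ on the same large cube. The side-$R_n$ pattern carried by the periodic cube recurs, by linear recurrence, within sup-distance $KR_n$; selecting an occurrence that is not aligned with the direction $\overrightarrow{t}_n$ yields a vector $\overrightarrow{s}_n$ independent of $\overrightarrow{t}_n$ for which $\T$ is also periodic on a large cube. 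Two independent periods on a cube that is large compared with them force, by a Fine--Wilf type estimate in $\NNd$ (measured in the sup-norm), periodicity along the whole lattice they generate; feeding this back and iterating lowers the number of free directions until $\T$ becomes genuinely periodic, again contradicting non-periodicity. I expect the main obstacle to be quantitative control of the transversal period: Proposition \ref{linrec1} only bounds recurrence gaps by $K$ times the pattern size, which exceeds the cube, so one must apply it at an intermediate scale and track carefully how the two periods combine, keeping all constants dependent only on $K$, $d$ and $\#A$.
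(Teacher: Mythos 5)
There is a genuine gap, and it sits exactly where you place it. First, for the record: the paper does not prove Proposition~\ref{linrec2} at all --- its ``proof'' is the citation ``See \cite{So2}'', where the argument exploits the self-similar (substitutive) structure of the tiling rather than linear recurrence alone. Your reduction to $r$ equal to the side of $P$, the translation of ``close repetition'' into a local period $\overrightarrow{t}$, and the bounded-period case (constant subsequence, translate to the origin, compactness and closedness of $Y$, minimality to spread the period to all of $Y$) are all correct and cleanly argued. But the case $|\overrightarrow{t}_n|\to\infty$ is not a proof; it is a plan whose central step fails as described. Linear recurrence, applied at any intermediate scale $\rho$, only guarantees a recurrence vector $\overrightarrow{s}$ with $|\overrightarrow{s}|\leq K\rho$ acting as a period on a window of side $\rho$ --- that is, a period which is \emph{comparable to}, not small compared with, the cube on which it acts. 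This is exactly the configuration you started from, so the construction never produces the ``two independent periods on a cube large compared with both'' input that your Fine--Wilf step requires; nor does anything force the new vector to be transversal to $\overrightarrow{t}_n$. You flag this yourself as ``the main obstacle'', but an acknowledged obstacle is not a closed one, and here it is the entire content of the proposition.

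The second unproved ingredient is the ``Fine--Wilf type estimate in $\NNd$'' together with the induction on the number of free directions. The paper explicitly warns, just before stating this proposition, that the Morse--Hedlund mechanism underlying the one-dimensional Proposition~\ref{linrec}(3) has no analogue for $d\geq 2$; the multidimensional periodicity lemmas that do exist (in the combinatorics-on-words literature around two-dimensional pattern matching and the Nivat conjecture) are quantitatively much weaker than what your argument needs, and combining two local periods into lattice periodicity on a slightly smaller cube is itself a delicate statement requiring proof. To actually close the argument one has to use more than linear recurrence in the abstract: either the substitution structure as in Solomyak's \cite{So2} (de-substituting a too-close pair of occurrences and using finite local complexity and unique ergodicity to extract a genuine translation period), or the Lagarias--Pleasants style analysis for linearly repetitive systems in \cite{LP}. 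As it stands, your proposal proves the easy half of the dichotomy and correctly identifies, but does not resolve, the hard half.
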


\begin{proof}
See \cite{So2}.
\end{proof}

\subsection{Frequencies of the patterns}

For every $a\in A$ and every pattern $P$ we define

$$
|S^n(a)|_P= \hbox{ number of occurrences of } P \hbox{ in } S^n(a).
$$

We call {\bf frequency} of the pattern $P$ in $\T$ the limit (when it exists)

$$
\freq_\T (P)
=
\lim_{n\to \infty}
\frac{1}{n^d}
\# \left\{ \overrightarrow{k} \in B(\overrightarrow{0}, n) ; \overrightarrow{k} \hbox{ is an occurrence of } P \hbox{ in } \T   \right\} .
$$

In \cite{So1} the author proves that self-similar tiling systems are uniquely ergodic for the action of $\RR^d$ by translations.
The array $\T_0$ can be viewed has a self-similar tiling.
Then, Theorem 3.3 of \cite{So1} implies that, for all patterns $P$, the frequency $\freq_\T (P)$ exists for all $\T$ and

$$
\freq_\T (P) = \freq_{\T_0} (P) =
\lim_{n\to \infty} \frac{|S^n (a)|_P}{|S^n (a)|} .
$$

There is a direct proof following the ideas in \cite{Qu} to obtain the unique ergodicity of these $\NN^d$-subshift.
The frequency being independent of $\T$ we denote it $\freq_S (P)$.

Let us call $L_R (S)$ the set of $R$-cubic patterns having an occurrence in some iterates of $S$.

\begin{theo}
\label{freq-array}
There exists a finite set $F\subset \RR$ such that for all $R\in  \NN$ there exists $k\in \NN$ satisfying

$$
\left\{
\freq_S (P) ; P \in L_R (S)
\right\}
\subset
\left\{
f\theta^{-k} ; f\in F
\right\} .
$$

\end{theo}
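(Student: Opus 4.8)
I will follow the blueprint of Theorem \ref{mesure-cyl}, the one-dimensional statement whose multidimensional analogue this is. The goal is to show that the frequencies of $R$-cubic patterns live in a set of the form $\{f\theta^{-k}; f\in F\}$ for a fixed finite $F$, where $k$ depends on $R$ but the numerator $f$ does not. The mechanism in the one-dimensional case was to realize the frequency of a word of length $n$ as a finite integer-combination of frequencies of length-2 words via the relation $\freq_\sigma(u)=\sum_{ab\in A_2}(\ldots)\freq_\sigma(ab)\theta^{-k}$, where the integer coefficients were uniformly bounded (by $2KMC$) thanks to linear recurrence. I will transport exactly this counting argument to $\NN^d$.

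First I would fix the scale $k$ to be the smallest integer with $R\leq l^k=I_k$ (here $I_k=\min_a|S^k(a)|$, but for constant side length $l$ all $|S^k(a)|=\theta^k=l^{dk}$, so the min and max coincide and $S_k=I_k$; this is cleaner than in the one-dimensional variable-length case). For each cubic pattern $P\in L_R(S)$ I would count occurrences of $P$ inside $S^{n+k}(a)$ by decomposing $S^{n+k}(a)=S^k(S^n(a))$: the block $S^n(a)$ tiles $\NN^d$ into $\theta^n$ cubes of side $l^n$, each of which, under $S^k$, becomes a cube of side $l^{n+k}$ filled by $S^k(b)$ for the appropriate letter $b=S^n(a)_{\overrightarrow{j}}$. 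Since $R\leq l^k$, any occurrence of $P$ meets at most a bounded number of adjacent $S^k$-cubes; I would therefore index occurrences by the small configuration $C$ of letters $\{b_{\overrightarrow{i}}\}$ of $S^n(a)$ that $P$ straddles. The number of occurrences of $P$ contributed per such local configuration is an integer bounded by a constant depending only on $K,l,d$ — this is where Proposition \ref{linrec2} enters: two distinct occurrences of $P$ (which contains a cube of side $\sim R$) must be at distance $\geq R/K'$, so at most $\sim (l^k K')^d$ occurrences fit inside a patch of side $O(l^k)$, giving a uniform bound on the multiplicity.

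Passing to the limit as $n\to\infty$ and dividing by $|S^{n+k}(a)|=\theta^{n+k}$, each boundary/interface term is negligible and the interior term converges, using $\freq_S(Q)=\lim_n|S^n(a)|_Q/|S^n(a)|$ (existence guaranteed by the unique ergodicity cited from \cite{So1}), to
\begin{equation*}
\freq_S(P)=\theta^{-k}\sum_{Q} c_{P,Q}\,\freq_S(Q),
\end{equation*}
where $Q$ ranges over the finitely many cubic patterns on $A$ of a fixed bounded support (roughly side $2$ or $3$, the multidimensional analogue of $A_2$), and the integer coefficients $c_{P,Q}$ are bounded by a constant $N_0=N_0(K',l,d)$ independent of $R$. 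I would then take
\begin{equation*}
F=\Big\{\textstyle\sum_{Q} j_Q\,\freq_S(Q); \ j_Q\in[0,N_0]\cap\NN\Big\},
\end{equation*}
a finite set, and the containment follows.

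**The main obstacle.** The delicate point, and the one genuinely harder than in dimension one, is the bookkeeping of boundary effects. In $\NN^d$ a pattern $P$ can straddle interfaces between several adjacent $S^k$-images simultaneously, and the ``last block'' correction term $|\sigma^k(b')|_u$ that was a single harmless term in the one-dimensional proof becomes an entire $(d-1)$-dimensional boundary layer of the cube $B(\overrightarrow{0},l^{n+k})$. I expect the main work to be verifying that this boundary contributes $O(\theta^{(n+k)(d-1)/d})=o(\theta^{n+k})$ and hence vanishes in the frequency limit, and simultaneously that the interior counting constant $c_{P,Q}$ truly depends only on the finitely many local straddling configurations and not on $n$ or $R$. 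Once these two uniformity facts are in place — both are consequences of linear recurrence via Propositions \ref{linrec1} and \ref{linrec2} — the conclusion is the same algebraic packaging as in Theorem \ref{mesure-cyl}.
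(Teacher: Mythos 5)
Your proposal follows essentially the same route as the paper: choose $k$ minimal with $R<l^k$, realize $\freq_S(P)$ as $\theta^{-k}\sum_{B} N(k,P,B)\,\freq_S(B)$ over the finitely many $2$-cubic patterns $B$, bound the integer multiplicities uniformly via the separation estimate of Proposition \ref{linrec2}, and take $F$ to be the resulting finite set of integer combinations. The only difference is that you explicitly flag the boundary/double-counting bookkeeping in the decomposition of $|S^{m+k}(c)|_P$, which the paper passes over by asserting the identity directly; this is a point of care, not a divergence of method.
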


\begin{proof}
Let $K'$ be the constant given by Proposition \ref{linrec2}.
Let $P\in L_{R}(S )$ be a $R$-cubic-pattern.
We define $k$ to be the smallest integer such that
$
l^{k-1} \leq R < l^k
$.

Let $L \subset L_2 (S )$ be the set of 2-cubic patterns such that $P$ has an occurrence in $S^ k( B)$.
The choice of $k$ implies this set is non-empty.
Let $ B\in L$.
From Proposition \ref{linrec2} it follows that

\begin{align*}
|S^k (B)|_P
\leq
\frac{|S^k (B) |}{|P|/K'^d}
=
\frac{2^d K'^d\theta^{k}}{R^d}
=
2^d K'^d
\left( \frac{l^{k}}{R}\right)^d
\leq
2^dK'^d \theta .
\end{align*}

We recall for all $c\in A$ we have

$$
\freq_{S } (P)
=
\lim_{m\to \infty}
\frac{|S^{m+k} (c)|_P}{|S^{m+k} (c)|} .
$$

Remark that for all $m$

$$
|S^{m+k} (c)|_P
=
\sum_{B\in L} |S^m (c)|_B N (k , P,B)
$$

where 

$$
N (k , P,B)
\leq 
\max_{B\in L} |S^k (B)|_P
\leq
2^d K'^d\theta .
$$

Then,

\begin{align*}
\frac{|S^{m+k} (c)|_P}{|S^{m+k} (c)|}
& =
\frac{\sum_{B\in L}  N ( k , P,B)  |S^m (c)|_{B}}{|S^{m+k} (c)|} \\
& =
\sum_{B\in L}
 N ( k , P,B) 
\frac{|S^m (c)|_{B}}{|S^m (c)|} \frac{|S^{m} (c)| }{|S^{m+k} (c)|} \\
& \longrightarrow_{m\to \infty } \sum_{B\in L}  N ( k , P,B) \freq_S (B)\theta^{-k} .
\end{align*}

We conclude as in the one-dimensional case.
\end{proof}

\subsection{Factor maps}

Let $(X,\NN^d)$ and $(Y,\NNd)$ be two $\NNd$-subshifts defined respectively on the alphabets $A$ and $B$.
We say  $(X,\NN^d)$ is a factor of $(Y,\NNd)$ if there exists a continuous and onto map 
$\phi : Y\to X$ such that $\phi (\T + \overrightarrow{v}) = \phi (\T) + \overrightarrow{v}$ for all $\T$ 
and $\overrightarrow{v}$.
We say $\phi $ is given by a {\bf $R$-sliding block code} if there exists a map $f : B^{B(0,R)} \to A$ such that $\phi (\T) (\overrightarrow{v}) = f ( (\T + \overrightarrow{v} )_{[B(0,R)]})$.
We recall $(\T + \overrightarrow{v} )_{[B(0,R)]}$ is the restriction of the array $(\T + \overrightarrow{v} )$ to $B(0,R)$.

\begin{theo}[Curtis-Lyndon-Hedlund \cite{LM}]
\label{CLH}
Any topological factor map $\phi $ between $\NNd$-subshifts is a sliding block code.
\end{theo}

\begin{prop}
\label{sliding}
Consider $(Y , \NNd )$ a minimal non-periodic substitution subshift and $(X,\NNd )$ a $\NNd$-subshift.
Let $\phi: Y \to X $ be a factor map.
Then, there exists $L$ and $R_f$ such that for all $R$-cubic-pattern $P$, the set $\phi^{-1} ([P]_X )$ 
is the union of at most $L$ $(R+R_f)$-cubic-cylinders. 
\end{prop}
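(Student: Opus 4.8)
The plan is to follow the one-dimensional Proposition~\ref{bounded-preimages} line for line, the linearly recurrent system being the domain $Y$ and $P$ a pattern of the codomain $X$. First I would use the Curtis--Lyndon--Hedlund Theorem~\ref{CLH} to write $\phi$ as an $R_f$-sliding block code $f\colon B^{B(\overrightarrow{0},R_f)}\to A$, where $A$ and $B$ are the alphabets of $X$ and $Y$. Since $\phi(\T)(\overrightarrow{v})=f((\T+\overrightarrow{v})_{[B(\overrightarrow{0},R_f)]})$ depends only on $\T$ on $\overrightarrow{v}+B(\overrightarrow{0},R_f)$, the pattern $\phi(\T)_{B(\overrightarrow{0},R)}$ depends only on the restriction of $\T$ to $B(\overrightarrow{0},R+R_f)$. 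Writing $f(Q)$ for the $R$-cubic-pattern obtained by applying $f$ to an $(R+R_f)$-cubic-pattern $Q$, and setting
$$
\F=\{\,Q\in L(Y);\ \supp(Q)=B(\overrightarrow{0},R+R_f),\ f(Q)=P\,\},
$$
one gets $\phi^{-1}([P]_X)=\bigcup_{Q\in\F}[Q]_Y$. This already displays the preimage as a union of $(R+R_f)$-cubic-cylinders, so the entire content is the \emph{uniform} bound $\#\F\le L$.

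To produce such a bound I would first show that $X$ is linearly recurrent, the multidimensional analogue of Lemma~\ref{mainlemme}. Indeed, given an $R$-cubic-pattern $P\in L(X)$ pick $Q\in\F$; by linear recurrence of $Y$ (Proposition~\ref{linrec1}) every cube of $Y$ whose side is a fixed multiple of that of $Q$ contains a translated copy of $Q$, and applying $f$ turns this copy into a copy of $P$ inside the corresponding cube of $X$. A short computation with sidelengths then shows that every cube of $X$ whose side is a fixed multiple of that of $P$ contains a copy of $P$, so $X$ is linearly recurrent with a constant depending only on $K$ and $R_f$ (the finitely many patterns of side $<R_f$ being harmless). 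At this point I must assume, exactly as in Proposition~\ref{bounded-preimages}, that $X$ is non-periodic; this hypothesis is genuinely needed, for if $X$ were a single point the preimage of the relevant cylinder would be all of $Y$, which is not a bounded union of cubic-cylinders. Being non-periodic and linearly recurrent, $X$ falls under Proposition~\ref{linrec2}, which is stated for arbitrary such subshifts and not only for substitution ones; hence there is a constant $K'$ so that two distinct occurrences of $P$ in any element of $X$ are at distance at least $R/K'$.

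The count is then identical in spirit to the one-dimensional one. Fix $\T\in Y$ and a cube $W$ of $Y$ whose side is the fixed multiple of $R+R_f$ provided by Proposition~\ref{linrec1}, so that every $Q\in\F$ has an occurrence at a position $\overrightarrow{x}_Q\in W$; distinct patterns force distinct positions, so $Q\mapsto\overrightarrow{x}_Q$ is injective. Each $\overrightarrow{x}_Q$ is an occurrence of $P$ in $\phi(\T)\in X$, and by the separation just obtained the points $\{\overrightarrow{x}_Q\}$ are pairwise at distance at least $R/K'$. An elementary packing estimate bounds the number of points of the cube $W$ that are pairwise $R/K'$-separated by a quantity depending only on $K$, $K'$ and $d$ (using $R\ge R_f$); this bounds $\#\F$. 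Enlarging the resulting constant by $\#\{(R+R_f)\text{-cubic-patterns of }L(Y)\}$ for the finitely many values $R<R_f$ yields a single $L$ valid for all $R$.

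The main obstacle is precisely the separation step: unlike in dimension one, where Lemma~\ref{mainlemme} is obtained by a direct return-word argument, here I have to transport cubes through the block code to see that $X$ is linearly recurrent, and then feed non-periodicity into the repetitivity estimate of Proposition~\ref{linrec2} for $X$. Everything else --- the reduction to a block code, the identification of $\phi^{-1}([P]_X)$ with a union of cylinders, and the final packing count --- is routine bookkeeping with cube sidelengths.
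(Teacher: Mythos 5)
Your proof follows the same route as the paper's: reduce $\phi$ to an $R_f$-sliding block code via Curtis--Lyndon--Hedlund, write $\phi^{-1}([P]_X)$ as a union of $(R+R_f)$-cubic-cylinders $[P_i]_Y$, and bound their number by combining the linear recurrence of $Y$ (all the $P_i$ occur in a single cube of side $K(R+R_f)$) with the $R/K'$-separation of occurrences of $P$ coming from Proposition \ref{linrec2} applied to $X$, which gives the same packing estimate $\bigl(K(R+R_f)\bigr)^d/(R/K')^d$. You are also right to flag that non-periodicity (and linear recurrence) of $X$ must be supplied to invoke Proposition \ref{linrec2}; the paper's statement of Proposition \ref{sliding} omits this hypothesis but its proof uses it in exactly the same way, and in the application the factor is indeed non-periodic.
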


\begin{proof}
From Proposition \ref{linrec1}, $(Y , \NNd )$ is linearly recurrent with some constant $K$.
The factor map $\phi $ is given by a $R_f$-sliding block code $f : B^{B(0,R_f)} \to A$.
The $\NNd$-subshift $(X,\NNd )$ is clearly linearly recurrent with the constant $K$: Let $K'$ be the constant given by Proposition \ref{linrec2}.

Let $P \in L (X)$ be a $R$-cubic-pattern with $R\geq \max (R_f , 2K')$.
The continuity of $\phi$ together with the Cantor structure of $Y$ imply $\phi^{-1} ([P]_X ) = \cup_{i=1}^n [P_i]_Y$ where the $P_i$ are cubic patterns of $Y$.
We can consider that the $P_i$ are $(R_f+R)$-cubic-patterns (this is the Curtis-Lyndon-Hedlund theorem).
Take a $(K(R_f+R))$-cubic-pattern $Q$ of $Y$.
By definition of the linear recurrence, all $(R_f + R)$-cubic-patterns have an occurrence in $Q$ and a fortiori all $P_i$.
Proposition \ref{linrec2} says that the difference $\overrightarrow{x}$ between two occurrences of $P$ 
is such that $|\overrightarrow{x}|\geq R/K'$.
Hence if $\overrightarrow{v}$ is the occurrence of some $P_i$ and $\overrightarrow{w}$ is 
the occurrence of some $P_j$, then $|\overrightarrow{v} - \overrightarrow{w}|\geq R/K'$;
 i.e there is at most one occurrence of some $P_i$ in the $(R/K')$-cubic-patterns. 
Consequently, the number of occurrences of the $P_i$ in $Q$ is at most 
$
\frac{(K(R_f + R))^d }{(R/K' )^d}$.
Hence $n$ is bounded which concludes the proof.
\end{proof}

\subsection{Proof of the necessary part in the primitive case}

\begin{theo}
Let $(X_1,\NNd)$ and $(X_2, \NNd )$ be two minimal subshifts generated respectively 
by a $p$-substitution and a $q$-substitution.
If they have a common non-periodic $\NNd$-subshift factor $(Z , \NNd )$ then $p$ and $q$ are multiplicatively dependent.
\end{theo}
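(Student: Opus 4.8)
The plan is to mimic, step by step, the one-dimensional argument of Section \ref{section-primitive}, replacing words by cubic patterns and the frequency tools of that section by the multidimensional counterparts established above. Write $\phi_1 : X_1 \to Z$ and $\phi_2 : X_2 \to Z$ for the two factor maps and set $\theta_1 = p^d$, $\theta_2 = q^d$, the Perron eigenvalues of the two substitutions. Since $Z$ is non-periodic and a factor of a periodic system would be periodic, both $X_1$ and $X_2$ are non-periodic minimal substitution subshifts; moreover $Z$, being a factor of the linearly recurrent $X_1$ (Proposition \ref{linrec1}), is itself minimal and linearly recurrent, so Proposition \ref{linrec2} applies to $Z$.

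First I would transport Theorem \ref{freq-array} through the factor maps, exactly as Theorem \ref{mesure-cyl-fact} is deduced from Theorem \ref{mesure-cyl} when $d=1$. Fix an $R$-cubic-pattern $P\in L(Z)$. By Proposition \ref{sliding} applied to $\phi_1$ there are a constant $L$ and an integer $R_f$, both independent of $P$, such that $\phi_1^{-1}([P]_Z)$ is a union of at most $L$ cubic-cylinders $[P_1]_{X_1},\dots,[P_m]_{X_1}$ whose supports are all the $(R+R_f)$-cube $B(\overrightarrow{0},R+R_f)$, hence pairwise disjoint. Through the sliding block code, each occurrence of $P$ in $\phi_1(\T)$ corresponds to an occurrence of exactly one $P_i$ in $\T$, so $\freq_Z(P)$ exists (the $X_1$-frequencies exist by unique ergodicity) and equals $\sum_{i=1}^m \freq_{X_1}(P_i)$. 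Now Theorem \ref{freq-array}, applied at level $R+R_f$, supplies a finite set $F$ and a single integer $k$ (depending only on $R$) with $\freq_{X_1}(P_i)\in\{f\theta_1^{-k} ; f\in F\}$ for every $i$; summing at most $L$ such terms keeps the same power $\theta_1^{-k}$, so $\freq_Z(P)\in\{f'\theta_1^{-k} ; f'\in F_1'\}$, where $F_1'$ is the finite set of sums of at most $L$ elements of $F$. The same reasoning through $\phi_2$ gives a finite set $F_2'$, so that for every $R$ there are $k,k'$ with
\begin{align*}
\left\{\freq_Z(P) ; P\in L_R(Z)\right\}
\subset
\left\{f\theta_1^{-k} ; f\in F_1'\right\}
\cap
\left\{g\theta_2^{-k'} ; g\in F_2'\right\}.
\end{align*}

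Next I would show that the frequencies of large cubic patterns tend to $0$. If $P$ is an $R$-cubic-pattern with $\supp(P)=B(\overrightarrow{j},R)$, then $B(\overrightarrow{j},R)\subset\supp(P)$, so Proposition \ref{linrec2} forces any two distinct occurrences of $P$ to be at distance at least $R/K'$; counting occurrences inside $B(\overrightarrow{0},n)$ and dividing by $n^d$ gives $\freq_Z(P)\leq (K'/R)^d$, which goes to $0$ as $R\to\infty$. Choosing one pattern of each size, the numbers $\freq_Z(P)$ thus take infinitely many distinct positive values. The conclusion is then the pigeonhole argument of the proof of Theorem \ref{cobham-prim}: each such value is at once of the form $f\theta_1^{-k}$ and $g\theta_2^{-k'}$ with $f\in F_1'$, $g\in F_2'$; since $F_1'\times F_2'$ is finite, two distinct values share the same pair $(f,g)$, whence $f\theta_1^{-k}=g\theta_2^{-k'}$ and $f\theta_1^{-m}=g\theta_2^{-m'}$ with $k\neq m$. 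Dividing yields $\theta_1^{m-k}=\theta_2^{m'-k'}$ with $m-k\neq 0$, that is $p^{d(m-k)}=q^{d(m'-k')}$, so $p$ and $q$ are multiplicatively dependent.

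The part requiring the most care is the transport step: one must verify that $\freq_Z(P)$ genuinely equals the sum of the finitely many preimage frequencies, and that the power $\theta_1^{-k}$ produced by Theorem \ref{freq-array} can be chosen uniformly over all $(R+R_f)$-cubic-patterns occurring as preimages, so that the finite sum stays inside a single set $\{f'\theta_1^{-k}\}$. Everything else is bookkeeping with finite sets, exactly parallel to the one-dimensional case; the genuinely multidimensional inputs, namely linear recurrence and the separation of occurrences, are already provided by Propositions \ref{linrec1}, \ref{linrec2} and \ref{sliding} together with Theorem \ref{freq-array}.
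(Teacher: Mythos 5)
Your proposal is correct and follows essentially the same route as the paper: combine Theorem \ref{freq-array} with Proposition \ref{sliding} to place the frequencies of cubic patterns of $Z$ in $\bigl(\cup_k F_1'\theta_1^{-k}\bigr)\cap\bigl(\cup_{k'} F_2'\theta_2^{-k'}\bigr)$, observe via Proposition \ref{linrec2} that these frequencies form an infinite set, and conclude by pigeonhole. The paper's own proof is just terser, leaving the transport of frequencies through the factor map and the infiniteness of the frequency set implicit, whereas you spell them out (correctly, and in parallel with the one-dimensional Theorem \ref{mesure-cyl-fact}).
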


\begin{proof}
From Theorem \ref{freq-array} and Proposition \ref{sliding}, there exist two finite sets $F_1$ and $F_2$ such that 
$$
F = \{ \freq_Z (P) ; [P]_Z \hbox{ cubic-cylinder of } Z \} 
\subset 
\left( 
\cup_{n\in \NN} F_1 r_1^{-n}  
\right)
\cap
\left(
\cup_{n\in \NN} F_2 r_2^{-n}  
\right)
.
$$

The subshift $(Z, \NN^d )$ being non-periodic, the set $F$
is infinite.
Hence, using the pigeon hole principle, there exist $f_i\in F_i$, $i=1,2$, and four positive integers $n_1\not = n_2$ and $m_1\not = m_2$  with 
$$
f_1 r_1^{n_i} = f_2 r_2^{m_i} , \ i=1,2.
$$

Consequently, $r_1^{n_1 - n_2} = r_2^{m_1-m_2}$.
\end{proof}

\subsection{Concluding discussion}

We did not present a proof of the sufficient part of Cobham Semenov theorem because we did not find a symbolic proof which would be better than the ``logical'' one in 
\cite{MV} (see also \cite{BHMV}).

Note that other Cobham type theorems exist.
In \cite{Du5} the substitutive version of Cobham's theorem is extended to what is called ``good substitutions''. 
We strongly believe it can be extended to all substitutions but it remains an open problem.
In \cite{Du3} Cobham's first theorem is generalized to non standard numeration systems like Fibonacci numeration system : $U_{n+2} = U_{n+1}+U_{n}$, $U_1 = 2$, $U_0 = 1$.
This certainly could be extended to other numeration systems (see \cite{LR} for example).

In a different direction, a Cobham's type theorem can be found in \cite{HRS}. 
In this paper they consider $\RR^d$-translation orbit closure of self similar tilings.
(In our paper we consider $\NN^d$-translation orbit closure of tilings generated by multidimensional constant length substitutions.)
The goal of their paper is not related to Cobham's theorem but the main argument they establish (Proposition 3 of \cite{HRS}) is 
clearly a Cobham's theorem for self similar tilings.


\begin{thebibliography}{MMMM}
  
\bibitem[AS]{AS} J.-P. Allouche, J. O. Shallit, {\it Automatic Sequences : Theory, Applications, Generalizations}, Cambridge University Press, Cambridge, 2003.

\bibitem[BHMV]{BHMV}   
V. Bruy\`ere, G. Hansel, C. Michaux and R. Villemaire, 
{\it Logic and $p$-recognizable sets of integers}, Bull. Belgian Math. Soc. Simon Stevin vol. 1 (1994) 191-238. 

\bibitem[Bu]{Bu}
J. R. B\"uchi, {\it Weak second-order arithmetic and finite automata}, Z. Math. Logik Grundlag. Math. 6 (1960), 66-92.

\bibitem[CG]{CG}
A. Cerny and J. Gruska,
{\it Modular trellises},
in The book of $L$, G. Rozenberg and A. Salomaa, Springer-Verlag (1985), 45-61.
\bibitem[CKMR]{CKMR}  
G. Christol, T. Kamae, M. Mend\`es-France et G.
Rauzy, {\it Suites Alg\'ebriques et Substitutions}, Bull. Soc. Math. France 10  8 (1980), 401-419. 

\bibitem[Co1]{Co1}
A. Cobham, 
{\it On the base-dependence of sets of numbers recognizable by finite automata}, 
Math. Systems Theory 3 (1969), 186-192. 

\bibitem[Co2]{Co2} 
A. Cobham, 
{\it Uniform tag sequences}, 
Math. Systems Theory 6 (1972), 164-192. 

\bibitem[DHS]{DHS}
F. Durand, B. Host, C. Skau,
{\it Substitutive dynamical systems, Bratteli diagrams and dimension
groups},
Ergodic Theory and Dynamical Systems 19 (1999), 953-993.

\bibitem[Du1]{Du1}
F. durand,
{\it A characterization of substitutive sequences using return words}, 
Discrete Mathematics 179 (1998), 89-101.

\bibitem[Du2]{Du2}
F. Durand,
{\it A generalization of Cobham's theorem}, 
Theory of Computing Systems 31 (1998), 169-185. 

\bibitem[Du3]{Du3} F. Durand, {\it Sur les ensembles d'entiers
    reconnaissables}, J. Th\'eo. Nombres Bordeaux 10 (1998), 65-84.

\bibitem[Du4]{Du4}
F. Durand,
{\it Linearly recurrent subshifts have a finite number of non-periodic subshift factors},
Ergod. Th. \& Dynam. Sys. {\bf 20} (2000), 1061--1078.

\bibitem[Du5]{Du5} F. Durand, {\it A theorem of Cobham for
    non-primitive substitutions}, {\it Acta Arith.} {\bf 104} (2002),
  225--241.
 
\bibitem[Ei]{Ei} S. Eilenberg, {\it Automata, languages, and machines}
  Vol. A. Pure and Applied Mathematics, Vol. 58. Academic Press, New
  York, 1974.
  

\bibitem[Ha1]{Ha1} 
G. Hansel, {\it A propos d'un th\'eor\`eme de Cobham}, Actes de la f\^ete des mots, D. Perrin Ed., GRECO de programmation, Rouen (1982).


\bibitem[HJ]{HJ} R. A. Horn, C. R. Johnson, {\it Matrix analysis},
  Cambridge University Press, Cambridge, 1990.

\bibitem[HM]{HM}
Hedlund, G.~A. and Morse, M.
{\it Symbolic Dynamics}, 
Am. J. Math. {\bf 60} (1938), 815-866.

\bibitem[HRS]{HRS}
C. Holton, C. Radin, L. Sadun, 
{\it Conjugacies for tiling dynamical systems},  
Comm. Math. Phys.  254  (2005), 343-359.

\bibitem[HW]{HW}
G. H. Hardy and E. M. Wright, 
{\it An introduction to the theory of numbers}, 
Oxford University Press, 5th ed. (1979).

\bibitem[HZ]{HZ}
C. Holton, L. Q. Zamboni,
{\it Descendants of primitive substitutions},
Theory Comput. Syst.  {\bf 32} (1999), 133--157. 

\bibitem[LP]{LP}
J. C. Lagarias, P. A. B. Pleasants,
{\it Repetitive Delone sets and quasicrystals},
Ergodic Theory Dynam. Systems 23 (2003), 831-867.

\bibitem[LM]{LM}
D. Lind, B. Marcus,
{\it An Introduction to Symbolic Dynamics and Coding},
Cambridge University Press, 1995.

\bibitem[LR]{LR}
P. Lecomte, M. Rigo,
{\it  Numeration systems on a regular language},
 Theory Comput. Syst.  34  (2001), 27-44.

\bibitem[MV]{MV} 
C. Michaux and R. Villemaire, 
{\it Presburger arithmetic and recognizability of sets of natural numbers by automata: 
New proofs of Cobham's theorem and Semenov's theorem}, Annals of Pure and Applied Logic 77 (1996), 251-277.

\bibitem[Pa]{Pa} J.-J. Pansiot, {\it Complexit\'e des facteurs des
    mots infinis engendr\'es par morphismes it\'er\'es}, Lect. Notes
  in Comp. Sci. 172 (1984), 380--389.
  
\bibitem[Qu]{Qu} 
M. Queff\'elec, {\it Substitution Dynamical
    Systems-Spectral Analysis}, Lecture Notes in Mathematics 1294,
  Springer-Verlag, Berlin (1987).
 
\bibitem[Se]{Se} 
A. L. Semenov, {\it The Presburger nature of predicates that are regular in two number systems}, Siberian Math. J. 18 (1977), 289-299. 

\bibitem[So1]{So1}
B. Solomyak,
{\it Dynamics of self-similar tilings},
Ergodic Theory and Dynamical Systems, 17 (1997), 695-738.

\bibitem[So2]{So2}
B. Solomyak, 
{\it Nonperiodicity implies unique composition for self-similar translationally finite tilings}, Discrete Comput. Geom.  20  (1998), 265-279.

\end{thebibliography}
\end{document}